\providecommand{\U}[1]{\protect\rule{.1in}{.1in}}
\newcommand{\BC}{{\mathbb {C}}}
\newcommand{\BG}{{\mathbb {G}}}
\newcommand{\BQ}{{\mathbb {Q}}}
\newcommand{\BZ}{{\mathbb {Z}}}
\newcommand{\CF}{{\mathcal {F}}}
\newcommand{\CH}{{\mathcal {H}}}
\newcommand{\CT}{{\mathcal {T}}}
\newcommand{\RA}{{\mathbf {A}}}
\newcommand{\Br}{{\mathrm{Br}}}
\newcommand{\Coker}{{\mathrm{Coker}}}
\newcommand{\coker}{{\mathrm{coker}}}
\newcommand{\Gal}{{\mathrm{Gal}}}
\newcommand{\Hom}{{\mathrm{Hom}}}
\renewcommand{\Im}{{\mathrm{Im}}}
\newcommand{\Ker}{{\mathrm{Ker}}}
\newcommand{\Pic}{\mathrm{Pic}}
\newcommand{\Spec}{{\mathrm{Spec}}}
\font\cyr=wncyr10
\newcommand{\Sha}{\hbox{\cyr X}}
\newcommand{\iso}{\stackrel{\sim}{\rightarrow} }
\newcommand{\sbt}{\subset}
\newcommand{\bk}{\bar{k}}
\newcommand{\et}{{\rm{\acute et}}}
\numberwithin{equation}{section}
\theoremstyle{remark}
\newtheorem{defi}{\rm{\textbf{D\'efinition}}}[section]
\theoremstyle{plain}
\newtheorem{thm}[defi]{\rm{\textbf{Th\'eor\`eme}}}
\newtheorem{cor}[defi]{\rm{\textbf{\textbf{Corollaire}}}}
\newtheorem{lem}[defi]{\rm{\textbf{Lemme}}}
\newtheorem{prop}[defi]{\rm{\textbf{\textbf{Proposition}}}}
\begin{document}

\title[Sous-groupe invariant]
{Sous-groupe de Brauer invariant pour un groupe alg\'ebrique connexe quelconque}

\author{Yang CAO}

\address{Yang CAO \newline 	University of Science and Technology of China,
\newline 96 Jinzhai Road,
 \newline	230026 Hefei, Chine}

\email{yangcao1988@ustc.edu.cn; yangcao1988@gmail.com}

\date{\today.}

\maketitle

\begin{abstract}

 Dans cet article, pour une vari\'et\'e lisse X munie d'une action d'un groupe alg\'ebrique connexe G (non n\'ecessairement lin\'eaire), on introduit la notion de sous-groupe de Brauer invariant et la notion d'obstruction de Brauer-Manin \'etale invariante. Ensuite, on montre que cette obstruction \'equivaut \`a l'obstruction de Brauer-Manin \'etale.
 Ceci g\'en\'eralise la notion principale et le r\'esultat cl\'e de l'aticle pr\'ec\'edent de l'auteur et ceci g\'en\'eralise aussi un r\'esultat de B. Creutz. 
 
Mots cl\'es: Groupe alg\'ebrique, groupe de Brauer, principle de Hasse.

MSC: 14G12
\medskip

\indent
Summary. 
In this paper, for a smooth variety X equipped with an action of a connected algebraic group G (not necessary linear),
  we introduce the notion of invariant Brauer sub-group and the  notion of invariant \'etale Brauer-Manin obstruction. 
  Then we prove that this obstruction is equivalent to the \'etale Brauer-Manin obstruction.
This extends the main notion and the key result of author's previous article and this also extends a result of B. Creutz.

Key words: Algebraic group, Brauer group,Hasse principle.
\end{abstract}

\tableofcontents

\section{Introduction}

Soit $k$ un corps de nombres. 
 On note $\Omega_k $ l'ensemble des places du corps de nombres $ k $ et $ \RA_k $ l'anneau des ad\`eles de $k$.
Pour chaque $ v \in \Omega_k $, on note $k_v$ le compl\'et\'e de $k$ en $v$.  
Pour une $k$-vari\'et\'e $X$, on note $X(\RA_k)$ l'ensemble des points ad\'eliques de $X$ (voir \cite{Co}).

Soit $X$ une $k$-vari\'et\'e alg\'ebrique et $\Br (X)$ son groupe de Brauer cohomologique.
Pour $B$ sous-ensemble de $\Br (X)$, on d\'efinit
$$ X ({\RA}_k)^ B = \{(x_v)_{v \in \Omega_k} \in X ({\RA}_k): \ \ \sum_{v \in \Omega_k} \ inv_v (\xi (x_v)) = 0\in \BQ/\BZ, \ \ \forall \xi \in B \}.$$ 
Comme l'a remarqu\'e Manin, la th\'eorie du corps de classes donne $ X (k) \subseteq X (\RA_k)^B $.
Ceci d\'efinit une obstruction au principe de Hasse pour $X$, appel\'ee \emph{obstruction de Brauer-Manin}.

 Depuis 1970s, divers auteurs (Manin, Colliot-Th\'el\`ene, Sansuc, Skorobogatov, Harari, Demarche, Poonen, Xu et l'auteur)  
  ont d\'ecrit diverses obstructions au principe de Hasse
et montr\'e que l'ensemble de Brauer-Manin \'etale $X(\RA_k)^{\et,\Br}$ (voir (\ref{Bthm2e})) est le plus fin (voir \cite{Ha02,D09,Sk1,CDX,C5}).
En particulier,  l'auteur  a montr\'e l'\'equivalence entre l'obstruction de descente et l'obstruction de descente it\'er\'ee (\cite[Thm. 1.2]{C5}, une question ouverte de Poonen).
Deux \'etapes cl\'es de  la d\'emonstration de \cite[Thm. 1.2]{C5} sont de d\'efinir l'ensemble de Brauer-Manin \'etale invariant $X(\RA_k)^{G-\et,\Br_G}$ (\cite[(1.2)]{C5}) 
et de montrer (\cite[Thm. 1.4]{C5}):
\begin{equation}\label{Be1}
X(\RA_k)^{G-\et,\Br_G}=X(\RA_k)^{\et,\Br},
\end{equation}
o\`u $G$ est un groupe lin\'eaire connexe et $X$ est une $G$-vari\'et\'e lisse. 

Dans cette article, on g\'en\'eralise cette notion dans le cas o\`u $G$ est un groupe alg\'ebrique connexe quelconque (pas forcement lin\'eaire): 
on d\'efinit $X(\RA_k)^{G-\et,\Br'_G}$ dans ce cas et 
le r\'esultat principal de cet article (th\'eor\`eme \ref{Thm1}) est une  g\'en\'eralisation de (\ref{Be1}).
De plus, le th\'eor\`eme \ref{Thm1} g\'en\'eralise un r\'esultat de B. Creutz \cite[Thm. 2]{Cr} sur les torseurs des vari\'et\'es ab\'eliennes (voir le corollaire \ref{cor1mainthm}).

\bigskip

Donnons maintenant des \'enonc\'es pr\'ecis.

\begin{defi}\label{BDef1}
Soient $G$ un $k$-groupe alg\'ebrique connexe et $(X,\rho )$ une $G$-vari\'et\'e lisse connexe. 

(1) \emph{Le sous-groupe cohomologique invariant de degr\'e $i$} de $X$ est le sous-groupe: 
$$H^i_G(X,\mu_{\infty}):=\{b\in H^i(X,\mu_{\infty})\ :\ (\rho^*(b)-p_2^*(b))\in p_1^*H^i(G,\mu_{\infty})\},$$
o\`u $i\in \BZ_{\geq 0}$, $G\times X\xrightarrow{p_1}G$, $G\times X\xrightarrow{p_2}X$ sont les projections,
et $G\times X\xrightarrow{\rho}X$ est l'action de $G$.

(2) \emph{Le sous-groupe de Brauer $G$-invariant} de $X$ est le sous-groupe
$$\Br_G(X):= \{a\in \Br(X)\ :\ (\rho^*(a)-p_2^*(a))\in p_1^*\Br(G)\} $$
et \emph{le sous-groupe de Brauer $G$-invariant am\'elior\'e} est le sous groupe
$$\Br'_G(X):=\Im\left(H^2_G(X,\mu_{\infty})\sbt H^2(X,\mu_{\infty})\to \Br(X)\right)\sbt \Br(X).$$ 
\end{defi}

Ce sous-groupe v\'erifie des propri\'et\'es esp\'er\'ees. Par d\'efinition, $\Br'_G(X)\sbt \Br_G(X)$.
Dans le cas o\`u $X=G$, on a $\Br_{2/3}(G)=\Br'_G(G)$ (proposition \ref{prop21.2} (2)) et
dans le cas o\`u $X$ est g\'eom\'etriquement int\`egre, on a: $\Br_{2/3}(X)\sbt \Br'_G(X)$ (proposition \ref{prop21.2} (1)), 
 o\`u $\Br_{2/3}(X)$ est d\'efini dans la d\'efinition \ref{DefiBr2/3}.
 Si $G$ est lin\'eaire, on a $\Br'_G(X)=\Br_G(X)$ (corollaire \ref{corlinBr1et2/3}), mais il existe une vari\'et\'e ab\'elienne $A$ telle que $\Br'_A(A)\neq \Br_A(A)$ (cf. \cite[Thm. 1.2, 1.3]{OSVZ}). 
 Dans le cas g\'en\'eral,  $H^2_G(X,\mu_{\infty})$ s'ins\`ere dans des suites exactes et ceci implique des propri\'et\'es importantes de $\Br'_G(X)$ (voir \S \ref{section4}, \S \ref{5}).
Pour $\Br_G(X)$, l'auteur conjecture qu'il n'existe pas de propri\'et\'e comme les corollaires \ref{corbraueralgebraic} et \ref{propbrauersuj}.

Dans cet article, tout torseur est un torseur \`a droite.
Soient $F$ un $k$-groupe alg\'ebrique et  $f: Y\to X$ un $F$-torseur. 
Pour tout 1-cocycle $\sigma\in Z^1(k,F)$, on note $F_{\sigma}$, respectivement  $f_{\sigma}: Y_{\sigma}\to X$ le   tordu du $k$-groupe 
$F$, respectivement du torseur $f$,
 par le 1-cocycle $\sigma$.
Alors $f_{\sigma}$ est un $F_{\sigma}$-torseur. La classe d'isomorphisme du $k$-groupe $F_{\sigma}$, respectivement du torseur $f_{\sigma}$, ne d\'epend que de la classe de $\sigma$ dans $H^1(k,F)$.
Par abus de notation, \'etant donn\'ee une classe $[\sigma] \in H^1(k,F)$, on
note $F_{\sigma}=F_{[\sigma]}$ et $f_{\sigma}=f_{[\sigma]}$.

Pour une vari\'et\'e lisse $X$, Skorobogatov (\cite{Sk99}) et Poonen d\'efinissent (\cite[\S 3.3]{P}) l'ensemble suivant
\begin{equation}\label{Bthm2e}
X(\RA_k)^{\et, \Br}:=\bigcap_{\stackrel{f: Y\xrightarrow{F}X,}{ F\ \text{fini}}} \bigcup_{\sigma\in H^1(k,F)}f_{\sigma}(Y_{\sigma}(\RA_k)^{\Br(Y_{\sigma})}) ,
\end{equation}
et on a une inclusion $X(k)\sbt X(\RA_k)^{\et, \Br}$.
Ceci d\'efinit une obstruction au principe de Hasse pour $X$, appel\'ee \emph{obstruction de Brauer-Manin \'etale}.

\begin{defi}
Soient $G$ un $k$-groupe alg\'ebrique connexe et  $X$ une $G$-vari\'et\'e lisse.

(1) \emph{Le sous-groupe de Brauer $G$-invariant am\'elior\'e (resp. le sous-groupe de Brauer $G$-invariant )} de $X$ est le sous-groupe 
$\Br'_G(X)\sbt \Br(X)$ (resp. $\Br_G(X)\sbt \Br(X)$) des \'el\'ements $\alpha$
 v\'erifiant $\alpha|_{X'}\in \Br'_G(X')$ (resp. $\alpha|_{X'}\in \Br_G(X')$) pour toute composante connexe $X'$ de $X$.

(2) Soit $F$ un $k$-groupe fini.  
Un $F$-torseur $Y\xrightarrow{f}X$ est \emph{$G$-compatible} s'il existe une action de $G$ sur $Y$ telle que $f$ soit un $G$-morphisme.
\end{defi}

D'apr\`es \cite[Prop. 3.3]{C5}, l'action de $G$ sur $Y$ v\'erifiant les conditions ci-dessus est unique et
  le $F_{\sigma}$-torseur $f_{\sigma}$ est aussi $G$-compatible pour tout $\sigma\in H^1(k,F)$. 
On d\'efinit les variantes de $X(\RA_k)^{\et,\Br}$ suivantes:
\begin{equation}
X(\RA_k)^{G-\et, \Br_G}:=\bigcap_{\stackrel{f: Y\xrightarrow{F}X\ G-\text{compatible} ,}{ F\ \text{fini}}} \bigcup_{\sigma\in H^1(k,F)}f_{\sigma}(Y_{\sigma}(\RA_k)^{\Br_G(Y_{\sigma})}) .
\end{equation}
et
\begin{equation}\label{def1e1}
X(\RA_k)^{G-\et, \Br'_G}:=\bigcap_{\stackrel{f: Y\xrightarrow{F}X\ G-\text{compatible} ,}{ F\ \text{fini}}} \bigcup_{\sigma\in H^1(k,F)}f_{\sigma}(Y_{\sigma}(\RA_k)^{\Br'_G(Y_{\sigma})}) .
\end{equation}
Alors $X(k)\sbt X(\RA_k)^{\et,\Br}\sbt X(\RA_k)^{G-\et, \Br_G}\sbt X(\RA_k)^{G-\et, \Br'_G}$.
Ceci d\'efinit des obstructions au principe de Hasse pour $X$, appel\'ee \emph{obstruction de Brauer-Manin \'etale invariante}.

Le th\'eor\`eme suivant g\'en\'eralise \cite[Thm. 1.4]{C5}.

\begin{thm}\label{Thm1}
Soient $G$ un groupe alg\'ebrique connexe et $X$ une $G$-vari\'et\'e lisse. 
Alors 
$$X(\RA_k)^{\et,\Br}= X(\RA_k)^{G-\et, \Br_G}= X(\RA_k)^{G-\et, \Br'_G}.$$
\end{thm}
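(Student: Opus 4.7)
The inclusion $X(\RA_k)^{\et,\Br}\sbt X(\RA_k)^{G-\et,\Br'_G}$ is immediate from the definitions, as already recorded in the paragraph preceding the theorem: the right-hand set is an intersection over a strictly smaller family of torsors (only the $G$-compatible ones) paired against the smaller sub-group $\Br'_G\sbt \Br$. The content of Theorem \ref{Thm1} is therefore the reverse inclusion. Fix $(x_v)\in X(\RA_k)^{G-\et,\Br'_G}$ and an arbitrary finite $F$-torsor $f:Y\to X$; the goal is to produce $\sigma\in H^1(k,F)$ and an adelic lift $(y_v)\in Y_{\sigma}(\RA_k)^{\Br(Y_{\sigma})}$ of $(x_v)$ through $f_{\sigma}$.

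My plan, extending the scheme of \cite[Thm. 1.4]{C5} from the linear to the non-linear setting, is to reduce to the hypothesis by replacing $f$ by a dominating $G$-compatible cover. The failure of $f$ to be $G$-compatible is measured by the difference of the two $F$-torsors $\rho^{*}f$ and $p_{2}^{*}f$ on $G\times X$; I would first construct a $G$-compatible torsor $\tilde f:\tilde Y\to X$ under a finite $k$-group $\tilde F$ which maps surjectively to $F$ and dominates $f$, by composing $f$ with a suitable finite auxiliary torsor killing this obstruction. Applying the hypothesis to $\tilde f$ yields a twist $\tilde\sigma\in H^{1}(k,\tilde F)$ and an adelic point $(\tilde y_v)\in \tilde Y_{\tilde\sigma}(\RA_k)^{\Br'_G(\tilde Y_{\tilde\sigma})}$ lying above $(x_v)$. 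Taking $\sigma$ to be the image of $\tilde\sigma$ in $H^{1}(k,F)$ and $(y_v)$ the image of $(\tilde y_v)$ in $Y_{\sigma}(\RA_k)$ provides the candidate twist and lift; orthogonality against $\Br(Y_{\sigma})$ is then verified via the cohomological description of $\Br'_G$ using $H^{2}_G(-,\mu_\infty)$ and the exact sequences developed in Section \ref{section4}.

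The principal obstacle, which distinguishes the non-linear setting from \cite{C5}, is the passage from $\Br'_G$-orthogonality on $\tilde Y_{\tilde\sigma}$ to full $\Br$-orthogonality on $Y_{\sigma}$. When $G$ is linear, Corollary \ref{corlinBr1et2/3} gives $\Br'_G=\Br_G$ and this transfer is essentially formal; but when $G$ admits a non-trivial abelian variety quotient, the inclusion $\Br'_G\sbt \Br$ is strict, so one must use the finer cohomological description of $\Br'_G$ via $H^{2}_G(-,\mu_\infty)$ from Section \ref{section4} to account for every Brauer class. Carrying this out will likely require iterating the $G$-compatible refinement a finite number of times, each iteration capturing an additional portion of $\Br(Y_{\sigma})\setminus \Br'_G(Y_{\sigma})$; ensuring compatibility between successive refinements and the descent back to $H^{1}(k,F)$ is the technical heart of the argument.
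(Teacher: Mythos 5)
Your plan correctly identifies the easy inclusion and the first half of the strategy (dominate an arbitrary finite torsor $f:Y\to X$ by a $G$-compatible one, apply the hypothesis, push the adelic point back down); this matches the skeleton of the paper's argument, which is itself a reduction to \cite[\S 4]{C5} via the propositions \ref{prop3.1}, \ref{prop3.16}, \ref{prop21.2} (1), \ref{descprop4.1}, \ref{lem4.22.1} et \ref{prop21.3}. But there is a genuine gap at exactly the step you yourself flag as the ``technical heart'': the passage from orthogonality to $\Br'_G(\tilde Y_{\tilde\sigma})$ to orthogonality to the full group $\Br(Y_\sigma)$. A point of $Y_\sigma$ lifted from $\tilde Y_{\tilde\sigma}(\RA_k)^{\Br'_G(\tilde Y_{\tilde\sigma})}$ is only orthogonal to those classes of $\Br(Y_\sigma)$ whose pullback lands in $\Br'_G(\tilde Y_{\tilde\sigma})$; since $\Br'_G$ is constrained by the condition $(\rho^*-p_2^*)(b)\in p_1^*H^2(G,\mu_\infty)$, generic (in particular transcendental) classes of $\Br(Y_\sigma)$ pull back outside $\Br'_G$ of \emph{any} further $G$-compatible cover, so no finite iteration ``capturing an additional portion of $\Br(Y_\sigma)\setminus\Br'_G(Y_\sigma)$'' can terminate. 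The actual mechanism is entirely different: one shows that the invariant set is stable under descent along finite torsors --- using the domination by $G$-compatible torsors (proposition \ref{prop3.16}), the lifting of twists through central extensions of finite groups controlled by $\Br_{2/3}$ (proposition \ref{prop21.3}), and the descent formula along torsors under groups of type multiplicatif (proposition \ref{descprop4.1}) --- and then invokes the known equivalence between the (iterated) descent obstruction and the \'etale Brauer--Manin obstruction (\cite{Sk1}, \cite{D09}, \cite{CDX}). None of these ingredients, nor the external comparison theorem, appears in your plan.

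A secondary but revealing error: you assert that when $G$ is linear the transfer is ``essentially formal'' because $\Br'_G=\Br_G$. This conflates two different gaps. Even for linear $G$ one has $\Br_G(X)\subsetneq\Br(X)$ in general, so \cite[Thm. 1.4]{C5} already had to bridge the full distance from the invariant subgroup to the whole Brauer group by the machinery above; the only \emph{new} difficulties in the non-linear case are the replacement of $\Br_1$ by $\Br_{2/3}$ (to keep control of $\Pic(\cdot)_{free}$, cf. proposition \ref{lem21.2}) and of $\Br_G$ by $\Br'_G$ (for which the exact sequences of \S\ref{section4} hold). Underestimating the linear case is likely why your plan treats the decisive step as a refinement rather than as requiring the comparison theorem.
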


Dans le cas o\`u $X$ est un $G$-espace homog\`ene \`a stabilisateur g\'eom\'etrique connexe, le th\'eor\`eme \ref{Thm1} implique $X(\RA_k)^{\Br'_G(X)}=X(\RA_k)^{\et, \Br} $ (corollaire \ref{cormainthm}). 
Si $X=G/H$ avec $H\subset G$ un sous-groupe connexe, dans  \cite[Thm. 1.4]{BD}, Borovoi et Demarche \'etablissent l'approximation forte par rapport \`a $\Br_1(X,G)$ hors des places archim\'ediens pour $X$  (avec des conditions classiques), 
o\`u $\Br_1(X,G):=\ker(\Br(X)\to \Br(G_{\bk}))$.
En fait, d'apr\`es (\ref{sansucthm-cor1-e1}), on a $\Br'_G(X)\subset  \Br_1(X,G)$, mais ces deux groupes ne sont pas \'egaux en g\'en\'eral:
par exemple, si $X=G$ et $G$ est une vari\'et\'e ab\'elienne v\'erifiant $H^1(k,\mathrm{NS}(G_{\bk}))\neq 0$,
 on a $\Br'_G(G)=\Br_{2/3}(G)$, $\Br_1(X,G)=\Br_1(G)$ et $\Br_{2/3}(G)\neq \Br_1(G)$, o\`u $\mathrm{NS}(G_{\bk})$ est le groupe de N\'eron-Severi g\'eom\'etrique de $G$.
Donc notre r\'esultat affirme que $\Br'_G(X)$ est suffit pour \'etablir l'approximation forte, qui renforce le r\'esultat de Borovoi et Demarche.

Un cas sp\'ecial du corollaire \ref{cormainthm} est (en utilisant la proposition \ref{prop21.2} (2)):

\begin{cor}\label{cor1mainthm}
Soient $G$ un groupe alg\'ebrique connexe et $X$ un $G$-torseur. 
Alors $$X(\RA_k)^{\Br_{2/3}(X)} =X(\RA_k)^{\et, \Br} .$$
\end{cor}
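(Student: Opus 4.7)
The plan is to combine Corollary \ref{cormainthm} with Proposition \ref{prop21.2} via a twisting argument. A $G$-torsor $X$ is a $G$-homogeneous space with trivial (hence connected) geometric stabilizer, so Corollary \ref{cormainthm} applies directly and gives
\[
X(\RA_k)^{\Br'_G(X)}=X(\RA_k)^{\et,\Br}.
\]
Moreover, since $X$ is geometrically integral (as $G$ is connected), Proposition \ref{prop21.2}(1) already yields $\Br_{2/3}(X)\subseteq\Br'_G(X)$. The only remaining task is to establish the reverse inclusion $\Br'_G(X)\subseteq\Br_{2/3}(X)$, for it will then give $X(\RA_k)^{\Br_{2/3}(X)}\subseteq X(\RA_k)^{\Br'_G(X)}=X(\RA_k)^{\et,\Br}$, while the opposite containment $X(\RA_k)^{\et,\Br}\subseteq X(\RA_k)^{\Br_{2/3}(X)}$ is automatic from $\Br_{2/3}(X)\subseteq\Br(X)$.

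For the reverse inclusion, I would argue by descent from Proposition \ref{prop21.2}(2), which gives the equality when $X=G$ with the translation action. Over a finite separable extension $k'/k$ splitting the torsor, the choice of a $k'$-point furnishes a $G_{k'}$-equivariant isomorphism $X_{k'}\simeq G_{k'}$, and Proposition \ref{prop21.2}(2) applied over $k'$ then yields $\Br'_G(X_{k'})=\Br_{2/3}(X_{k'})$. Both subgroups $\Br'_G$ and $\Br_{2/3}$ are defined through pullback conditions along the action morphism $\rho$ and the projections, so they are functorial under $G$-equivariant base change; for $\alpha\in\Br'_G(X)$ this forces $\alpha_{k'}\in\Br_{2/3}(X_{k'})$, and Galois descent then yields $\alpha\in\Br_{2/3}(X)$.

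The main obstacle is this descent step: although the trivialization $X_{k'}\simeq G_{k'}$ depends on the choice of a $k'$-point and is not Galois-equivariant, the two subgroups $\Br'_G(X)$ and $\Br_{2/3}(X)$ are defined without reference to any trivialization, purely from the $G$-variety structure on $X$. One must check carefully that the intrinsic definitions commute with base change in the expected way, so that membership in $\Br_{2/3}(X_{k'})$ for $\alpha_{k'}$ does descend to membership in $\Br_{2/3}(X)$ for $\alpha$. Once this is verified, the equality $\Br'_G(X)=\Br_{2/3}(X)$ follows and the corollary is immediate.
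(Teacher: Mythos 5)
Your overall strategy coincides with the paper's: Corollaire \ref{cormainthm} applies to the torsor $X$ (trivial, hence geometrically connected, stabilizer) and gives $X(\RA_k)^{\Br'_G(X)}=X(\RA_k)^{\et,\Br}$, so everything reduces to the equality $\Br_{2/3}(X)=\Br'_G(X)$. But you have misread Proposition \ref{prop21.2}\,(2): it is stated for an \emph{arbitrary} $G$-torsor $X$, not only for $X=G$ with the translation action, so it already supplies exactly the equality you need and the paper's proof stops there. Your substitute --- deducing the general case from $X=G$ by passing to a splitting field $k'$ and ``Galois descent'' --- contains a genuine gap, which you flag yourself but do not close. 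Knowing $\alpha_{k'}\in\Br_{2/3}(X_{k'})$ does not formally imply $\alpha\in\Br_{2/3}(X)$: by D\'efinition \ref{DefiBr2/3}, membership in $\Br_{2/3}$ means lying in the kernel of $\phi_*\colon \Br_1(X)\to H^1(k,\Pic(X_{\bk})_{free})$, and the restriction map $H^1(k,\Pic(X_{\bk})_{free})\to H^1(k',\Pic(X_{\bk})_{free})$ has kernel killed by $[k':k]$ but not necessarily trivial; an element of $\Br'_G(X)$ could a priori map to a nonzero class that dies over $k'$. So the descent cannot be carried out at the level of Brauer groups as you propose.

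The repair is precisely what the paper does inside the proof of Proposition \ref{prop21.2}\,(2): work with $H^2(\cdot,\mu_{\infty})$ instead. Proposition \ref{prop21.1}\,(2) shows $H^2_G(X,\mu_{\infty})=H^2_1(X,\mu_{\infty})$ for a torsor --- there the reduction to $\bk$ \emph{is} legitimate, because $H^2_1$ is by definition the kernel of restriction to $X_{\bk}$ and one proves $H^2_{G_{\bk}}(X_{\bk},\mu_{\infty})=0$ --- while Proposition \ref{lem21.2} identifies $\Br_{2/3}(X)$ with the image of $H^2_1(X,\mu_{\infty})$ in $\Br(X)$, which by the definition of $\Br'_G$ is then all of $\Br'_G(X)$. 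With Proposition \ref{prop21.2}\,(2) quoted in its actual generality, your argument becomes the paper's proof verbatim.
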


Si $G$ est une vari\'et\'e ab\'elienne, on a $\Br_{2/3}(X)=\Br_{1/2}(X)$ (Corollaire \ref{remdefBr2/3}), o\`u $\Br_{1/2}(X)$ est d\'efini dans \cite[pp. 378]{St}.
Donc $X(\RA_k)^{\Br_{1/2}(X)} =X(\RA_k)^{\Br(X)}=X(\RA_k)^{\et, \Br}$, ce qui g\'en\'eralise \cite[Thm. 2]{Cr}, o\`u Creutz montre que $X(\RA_k)^{\Br_{1/2}(X)}=X(\RA_k)^{\Br(X)}$.

\bigskip

Donnons maintenant la structure de l'article.
Au \S 2, on introduit la notion $\Br_{2/3}$ tel que $\Br_{1/2}\sbt \Br_{2/3}\sbt \Br_1$ et on \'etablit le lien de $\Br_{2/3}(X)$ avec l'existence des torseurs sous un $k$-groupe fini commutatif.
Au \S 3, on \'etablit des propri\'et\'es fondamentales du sous-groupe de Brauer $G$-invariant et son lien avec $\Br_{2/3}$.
Au \S 4, inspir\'ee par la suite exacte de Sansuc, on \'etablit la suite exacte correspondante pour un torseur sous un $k$-groupe non lin\'eaire. 
Comme cons\'equence, on montre des suites exactes techniques dans \S 5.
Au \S 6, on \'etablit la m\'ethode de descente des points ad\'eliques orthogonaux aux sous-groupes de Brauer invariants pour un torseur sous un $k$-groupe de type multiplicatif.
Ensuite, on montre le th\'eor\`eme \ref{Thm1} dans \S 7.

\bigskip

\textbf{Conventions et notations}. 

Soit $k$ un corps quelconque de caract\'eristique $0$. On note $\overline{k}$ une cl\^oture alg\'ebrique et $\Gamma_k:=\Gal (\bk/k)$.

Tous les groupes de cohomologie sont des groupes de cohomologie \'etale.
D\'efinissons le faisceau \'etale $\mu_{\infty}:=\mathrm{colim}_n \mu_n$.

Une $k$-vari\'et\'e $X$ est un $k$-sch\'ema s\'epar\'e de type fini. 
 Pour $X$ une telle vari\'et\'e, on note
  $k[X]$ son anneau des fonctions globales,
$k[X]^{\times}$ son groupe des fonctions inversibles,
 $\Pic(X):=H^1_{\text{\'et}}(X,\BG_m)$ son groupe de Picard
 et
$\Br(X):=H_{\text {\'et}}^2 (X, \BG_m)$ son groupe de Brauer. Notons
$$\Br_1 (X) := \Ker [\Br (X) \to\Br (X_ {\bk})]\ \ \text{ et}\ \ \Br_a(X):=\Br_1(X)/\Im\Br(k).$$
Le groupe $\Br_1 (X)$ est le sous-groupe ``alg\'ebrique'' du groupe de Brauer de $X$.
Si $X$ est int\`egre, on note $k(X)$ son  corps des fonctions rationnelles 
et $\pi_1(X,\bar{x})$ (ou $\pi_1(X)$) son groupe fondamental \'etale, o\`u $\bar{x}$ est un point g\'eom\'etrique de $X$.
Soit $\pi_1(X_{\bk})^{ab}$ le quotient maximal ab\'elien de $\pi_1(X_{\bk})$.
Alors $\pi_1(X_{\bk})^{ab} $ est un $\Gamma_k$-module.

Un $k$-groupe alg\'ebrique $G$ est une $k$-vari\'et\'e qui est un $k$-sch\'ema en groupes. 
On note $e_G$ l'unit\'e de $G$ et $G^*$ le groupe des caract\`eres de $G_{\bk}$.
C'est un module galoisien de type fini.

Un $k$-groupe fini $F$ est un $k$-groupe alg\'ebrique qui est fini sur $k$. 
Dans ce cas, $F$ est d\'etermin\'e par le $\Gamma_k$-groupe $F(\bk)$.
Pour toute $k$-vari\'et\'e lisse $X$, on a un isomorphisme canonique (\cite[\S XI.5]{SGA1}):
\begin{equation}\label{BiDprop2.2e}
H^1(\pi_1(X),F(\bk))\iso H^1(X,F)\ \ \ \text{et donc}\ \ \ H^1(X_{\bk},F)\cong \Hom_{cont}(\pi_1(X_{\bk}),F(\bk))/\sim 
\end{equation}
o\`u l'action de $\pi_1(X)$ sur $F(\bk)$ est induite par celle de $\Gamma_k$ et $\sim $ est induite par la conjugaison.

 Soit $G$ un $k$-groupe alg\'ebrique. Une \emph{$G$-vari\'et\'e} $(X,\rho )$ (ou $X$) est une $k$-vari\'et\'e $X$ munie d'une action \`a gauche $G\times_k X\xrightarrow{\rho}X$. 
Un $k$-morphisme de $G$-vari\'et\'es est appel\'e \emph{$G$-morphisme} s'il est compatible avec l'action de $G$.

\section{Pr\'eliminaires sur $\Br_{2/3}$}

Dans toute cette section,  $k$ est un corps quelconque de caract\'eristique $0$. 
Sauf  mention explicite du contraire,  une vari\'et\'e est une $k$-vari\'et\'e.

Soient $X$ une vari\'et\'e lisse et $\pi: X\to \Spec\ k$.
Dans cette section, on suit l'id\'ee de Stoll (la notion de $\Br_{1/2}(X)$ pour $X$ projective dans \cite[pp. 378]{St}) et 
on d\'efinit $\Br_{2/3}(X)$ (d\'efinition \ref{DefiBr2/3}) dans le cas plus g\'en\'eral (on a $\Br_{1/2}\sbt \Br_{2/3}\sbt \Br_1$). 
Ensuite, on \'etablit le lien de $\Br_{2/3}(X)$ avec l'existence des torseurs sous un $k$-groupe fini commutatif (proposition \ref{sec2prop1}) et avec $H^2(X,\mu_{\infty})$ (proposition \ref{lem21.2}).

On d\'efinit:
\begin{equation}
H^2_1(X,\mu_{\infty}):=\Ker(H^2(X,\mu_{\infty})\to H^2(X_{\bk},\mu_{\infty}) ).
\end{equation}
Par d\'efinition, $H^2_1(X,\mu_{\infty})$ est fonctoriel en $X$.

En fait, on peut d\'efinir 
\begin{equation}\label{DefiBr2/3e1}
\Br_{2/3}(X):=\Im(H^2_1(X,\mu_{\infty})\sbt H^2(X,\mu_{\infty})\to \Br(X)) .
\end{equation}
Mais, pour faire lien avec le type d'un torseur, on donne la d\'efinition \ref{DefiBr2/3} ci-dessous et montre que les deux d\'efinitions sont le m\^emes dans la proposition \ref{lem21.2}.

\medskip

Dans \cite{HS13}, Harari et Skorobogatov d\'efinissent 
$$KD(X):=(\tau_{\leq 1}R\pi_*\BG_m)[1]\in D^{[-1,0]}(k),\ \ \   KD'(X):=\mathrm{Cone}(\BG_{m,k}[1]\to KD(X))$$
 et  \'etablissent la suite exacte:
\begin{equation}\label{sec2e1}
H^1(k,S)\to H^1(X,S)\xrightarrow{\chi} \Hom_{D^+(k)}(S^*,KD'(X))\xrightarrow{\partial} H^2(k,S)
\end{equation} 
pour tout $k$-sch\'ema en groupe de type multiplicatif $S$.
L'homomorphisme $\chi$ est appel\'e \emph{le type}.

D'apr\`es \cite[P. 5 Remark]{HS13}, on a 
$$\Br_1(X)\cong H^1(k,KD(X))\ \ \  \text{et} \ \ \ \CH^0(KD'(X))\cong \CH^0(KD(X))\cong \Pic(X_{\bk}).$$
Soient $\Pic(X_{\bk})_{tor}$ (resp. $\Pic(X_{\bk})_{free}$) le sous-groupe torsion maximal (resp. le quotient libre maximal) de $\Pic(X_{\bk})$.
 Ainsi on a une suite exacte de modules galoisiens:
$$0\to \Pic(X_{\bk})_{tor}\to \Pic(X_{\bk})\to \Pic(X_{\bk})_{free}\to 0.$$
Ceci induit
\begin{equation}\label{sec2e2}
\phi: KD(X)\to \Pic(X_{\bk})_{free}, \ \ \ \phi': KD'(X)\to \Pic(X_{\bk})_{free}
\end{equation}
et $\phi_*: H^1(k,KD(X))\to H^1(k,\Pic(X_{\bk})_{free}).$

\begin{defi}\label{DefiBr2/3}
\emph{Le sous-groupe $\Br_{2/3}(X)\sbt \Br(X)$}  d'une vari\'et\'e lisse $X$ est:

(i) si $X$ est connexe, le groupe $\Br_{2/3}(X):=\Ker(\phi_*)\sbt  \Br_1(X);$

(ii) en g\'en\'eral, le groupe $\Br_{2/3}(X)\sbt \Br(X)$ des \'el\'ements $\alpha$
 v\'erifiant $\alpha|_{X'}\in \Br_{2/3}(X')$ pour toute composante connexe $X'$ de $X$.
\end{defi}

Par d\'efinition, $\Br_{2/3}(X)$ est fonctoriel en $X$.
En fait, il contr\^ole l'existence des torseurs sous un $k$-groupe fini commutatif (la proposition \ref{sec2prop1} suivante).

\begin{prop}\label{sec2prop1}
Soient $k$ un corps de nombres, $S$ un $k$-groupe fini commutatif et $X$ une $k$-vari\'et\'e lisse g\'eom\'e\-triquement int\`egre.
Si $X(\RA_k)^{\Br_{2/3}(X)}\neq \emptyset$, alors $\chi$ dans (\ref{sec2e1}) est surjectif. 
\end{prop}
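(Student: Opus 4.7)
Le plan est d'établir la surjectivité de $\chi$ en montrant que $\partial(\lambda)=0$ dans $H^2(k,S)$ pour tout $\lambda\in \Hom_{D^+(k)}(S^*,KD'(X))$. On combine une trivialité locale avec la dualité de Poitou-Tate, et on utilise l'hypothèse de Brauer-Manin via le fait que $S^*$ est un module fini (donc de torsion).

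\emph{Étape locale.} L'hypothèse fournit $(x_v)\in X(\RA_k)^{\Br_{2/3}(X)}$. Pour chaque place $v$, la fonctorialité de la suite (\ref{sec2e1}) appliquée à $x_v\colon \Spec k_v\to X$ donne un diagramme commutatif. Comme $KD(\Spec k_v)\simeq \BG_m[1]$, on a $KD'(\Spec k_v)\simeq \Cone(\BG_m[1]\to \BG_m[1])\simeq 0$, donc $x_v^*(\lambda)=0$ et par conséquent $\partial(\lambda)|_{k_v}=0$ dans $H^2(k_v,S)$. Il s'ensuit que $\partial(\lambda)\in \Sha^2(k,S):=\Ker\bigl(H^2(k,S)\to \prod_v H^2(k_v,S)\bigr)$.

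\emph{Étape globale.} Par la dualité de Poitou-Tate pour les modules galoisiens finis, $\Sha^2(k,S)\cong \Sha^1(k,S^D)^\vee$, où $S^D$ est le dual de Cartier de $S$ (correspondant au module galoisien $S^*$). Il suffit donc de vérifier l'annulation de l'appariement $\langle\partial(\lambda),s\rangle\in\BQ/\BZ$ pour tout $s\in \Sha^1(k,S^D)$. À un tel $s$, vu comme morphisme $\BZ\to S^*[1]$ dans $D^+(k)$, on associe la composition $\lambda[1]\circ s\in H^1(k,KD'(X))$. Le triangle distingué $\BG_m[1]\to KD(X)\to KD'(X)\to \BG_m[2]$ et la trivialité locale ci-dessus permettent de relever cet élément en $A_{\lambda,s}\in \Br_1(X)=H^1(k,KD(X))$, bien défini modulo $\Br(k)$ (qui ne contribue pas à la somme globale des invariants par réciprocité).

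\emph{Inclusion dans $\Br_{2/3}$ et conclusion.} L'image $\phi_*(A_{\lambda,s})\in H^1(k,\Pic(X_{\bk})_{free})$ est induite par $\phi'\circ\lambda\circ s$, puisque $\phi$ se factorise à travers $\phi'\colon KD'(X)\to \Pic(X_{\bk})_{free}$. Or le morphisme sous-jacent $\phi'\circ\lambda\colon S^*\to \Pic(X_{\bk})_{free}$ est nul, car $S^*$ est un $\Gamma_k$-module fini (donc de torsion) et $\Pic(X_{\bk})_{free}$ est sans torsion. Ainsi $A_{\lambda,s}\in \Br_{2/3}(X)$. La compatibilité classique entre l'appariement de Poitou-Tate et l'évaluation de Brauer-Manin (cf. \cite{HS13}) fournit alors
$$\langle\partial(\lambda),s\rangle=\sum_v inv_v(A_{\lambda,s}(x_v))=0,$$
la dernière égalité provenant de l'hypothèse $(x_v)\in X(\RA_k)^{\Br_{2/3}(X)}$. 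On conclut $\partial(\lambda)=0$. L'obstacle principal sera la vérification détaillée de cette identité de compatibilité pour la construction de $A_{\lambda,s}$ via $KD'(X)$: elle requiert un maniement soigneux du formalisme dérivé (triangles distingués, cup-produits et leur compatibilité avec l'appariement de Poitou-Tate), mais suit les techniques bien établies de la théorie de la descente.
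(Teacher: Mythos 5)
Votre démonstration est correcte et suit essentiellement la même stratégie que celle de l'article : l'observation clé que $\phi'\circ\lambda=0$ (car $S^*$ est fini donc de torsion, tandis que $\Pic(X_{\bk})_{free}$ est sans torsion) place les classes $\lambda_*(a)$ dans $\Br_{2/3}(X)$, puis la compatibilité entre l'accouplement de Poitou--Tate et l'évaluation de Brauer--Manin jointe à la non-dégénérescence de cet accouplement donne $\partial(\lambda)=0$. La seule différence est de présentation : l'article obtient d'un seul coup l'appartenance $\partial(\lambda)\in\Sha^2(k,S)$ et la formule d'accouplement en citant \cite[Thm.~3.5]{HS13}, là où vous explicitez l'étape locale et la construction du relèvement $A_{\lambda,s}$.
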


\begin{proof}
Pour tout $\lambda\in \Hom_{D^+(k)}(S^*,KD'(X)),$ on a $\phi'\circ \lambda\in \Hom_k(S^*,\Pic(X_{\bk})_{free})=0$, o\`u $\phi'$ est d\'efini dans (\ref{sec2e2}).
Donc, pour tout $a\in H^1(k,S^*)$, on a $\lambda_*(a)\in \Br_{2/3}(X)/\Im\Br(k)$.
D'apr\`es \cite[Thm. 3.5]{HS13}, on a $\partial(\lambda)\in \Sha^2(k,S)$ et  $<\partial(\lambda),a>_{PT}=0$ pour tout $a\in \Sha^1(k,S^*)$, 
o\`u $<,>_{PT}: \Sha^2(k,S)\times \Sha^1(k,S^*)\to \BQ/\BZ$ est l'accouplement de Poitou-Tate, qui est non d\'eg\'en\'er\'e. 
Donc $\partial(\lambda)=0$ et $\lambda\in \Im(\chi)$.
\end{proof}

\begin{cor}\label{remdefBr2/3}
Soit $X$ une vari\'et\'e projective lisse g\'eom\'e\-triquement int\`egre. 
Si le groupe de N\'eron-Severi $\mathrm{NS}(X_{\bk})$ est sans torsion (par exemple, si $X$ est un torseur sous une vari\'et\'e ab\'elienne), 
alors $\Br_{2/3}(X)=\Br_{1/2}(X)$, o\`u $\Br_{1/2}(X)$ est d\'efini dans \cite[pp. 378]{St}).
\end{cor}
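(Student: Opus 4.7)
The plan is to express both $\Br_{2/3}(X)$ and $\Br_{1/2}(X)$ as kernels of maps out of $\Br_1(X)$ into Galois cohomology, and to show these two kernels coincide. By definition, $\phi$ factors as $KD(X)\to \Pic(X_{\bk})[0]\to \Pic(X_{\bk})_{free}[0]$, so from the distinguished triangle $\BG_m[1]\to KD(X)\to \Pic(X_{\bk})[0]$ one obtains the standard factorisation $\Br_1(X)=H^1(k,KD(X))\to H^1(k,\Pic(X_{\bk}))\to H^1(k,\Pic(X_{\bk})_{free})$, and $\Br_{2/3}(X)$ is the kernel of this composite. Unravelling Stoll's definition, $\Br_{1/2}(X)/\Br(k)\subset \Br_a(X)\hookrightarrow H^1(k,\Pic(X_{\bk}))$ is the preimage of $\Im(H^1(k,\Pic^0(X_{\bk}))\to H^1(k,\Pic(X_{\bk})))$; by the long exact sequence for $0\to \Pic^0\to \Pic\to \NS\to 0$, this image coincides with $\Ker(H^1(k,\Pic(X_{\bk}))\to H^1(k,\NS(X_{\bk})))$. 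Hence it suffices to prove that the two natural maps $H^1(k,\Pic(X_{\bk}))\to H^1(k,\Pic(X_{\bk})_{free})$ and $H^1(k,\Pic(X_{\bk}))\to H^1(k,\NS(X_{\bk}))$ have the same kernel.

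Since $\Pic^0(X_{\bk})=A(\bk)$ (with $A$ the Picard variety of $X$) is divisible and $\NS(X_{\bk})$ is torsion-free by hypothesis, any torsion element of $\Pic(X_{\bk})$ already lies in $\Pic^0(X_{\bk})$, i.e., $\Pic(X_{\bk})_{tor}=\Pic^0(X_{\bk})_{tor}$. This yields a short exact sequence of Galois modules
$$0\to V\to \Pic(X_{\bk})_{free}\to \NS(X_{\bk})\to 0, \qquad V:=\Pic^0(X_{\bk})/\Pic^0(X_{\bk})_{tor},$$
and $V$, as a quotient of a divisible group by a divisible one, is uniquely divisible, hence carries the structure of a $\BQ$-vector space with continuous $\Gamma_k$-action. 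It then suffices to show $H^1(k,V)=0$, since the long exact sequence will yield the desired injectivity $H^1(k,\Pic(X_{\bk})_{free})\hookrightarrow H^1(k,\NS(X_{\bk}))$, completing the proof of the equality $\Br_{2/3}(X)=\Br_{1/2}(X)$.

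The vanishing $H^1(k,V)=0$ is the main technical step. Since $A(\bk)=\bigcup_{L/k\text{ finie}} A(L)$, $V$ is a discrete $\Gamma_k$-module and $V=\bigcup_U V^U$ over open subgroups $U\le \Gamma_k$. For each $U$, $V^U$ is a $\BQ$-vector space with trivial $U$-action; the Hochschild--Serre spectral sequence together with the vanishing of $H^q(U,V^U)$ for $q>0$ (trivial action of a profinite group on a $\BQ$-vector space) gives $H^1(\Gamma_k,V^U)=H^1(\Gamma_k/U,V^U)$, and the latter vanishes because $|\Gamma_k/U|$ annihilates $H^{*}(\Gamma_k/U,V^U)$ while acting invertibly on the $\BQ$-vector space $V^U$. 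Passing to the filtered colimit, $H^1(k,V)=\mathrm{colim}_U H^1(\Gamma_k/U,V^U)=0$. For the parenthetical example, if $X$ is a torsor under an abelian variety $A$, then $X_{\bk}\simeq A_{\bk}$ and $\NS(A_{\bk})$ is torsion-free, being embedded via the first Chern class into $\Hom(A_{\bk},A_{\bk}^{\vee})$, a free $\BZ$-module of finite rank. The main obstacle lies in handling $H^1(k,V)$ cleanly: one must verify that $V$ is genuinely discrete as a $\Gamma_k$-module and justify the commutation of continuous cohomology with filtered colimits, after which the reduction to finite groups acting on $\BQ$-vector spaces is formal.
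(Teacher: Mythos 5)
Your proof is correct and follows essentially the same route as the paper's: both arguments rest on the two key facts that the torsion-freeness of $\mathrm{NS}(X_{\bk})$ forces $\Pic(X_{\bk})_{tor}=\Pic^0(X_{\bk})_{tor}$, and that $\Pic^0(X_{\bk})_{free}$ is uniquely divisible, hence has vanishing $H^1(k,-)$. The only (cosmetic) difference is that you compare the two subgroups of $H^1(k,\Pic(X_{\bk}))$ by passing through $H^1(k,\mathrm{NS}(X_{\bk}))$ and the injectivity of $H^1(k,\Pic(X_{\bk})_{free})\to H^1(k,\mathrm{NS}(X_{\bk}))$, whereas the paper identifies $\Im\bigl(H^1(k,\Pic^0(X_{\bk}))\to H^1(k,\Pic(X_{\bk}))\bigr)$ with $\Ker\bigl(H^1(k,\Pic(X_{\bk}))\to H^1(k,\Pic(X_{\bk})_{free})\bigr)$ directly.
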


\begin{proof}
Dans ce cas, $\Pic^0(X_{\bk})_{tor}=\Pic(X_{\bk})_{tor}$ et $\Pic^0(X_{\bk})_{free}$ est uniquement divisible. 
Donc $H^1(k,\Pic^0(X_{\bk})_{free})=0$ et 
$$\Im(H^1( k,\Pic^0(X_{\bk}))\to H^1(k,\Pic(X_{\bk})))=\Ker(H^1(k,\Pic(X_{\bk}))\to H^1(k,\Pic(X_{\bk})_{free})),$$ 
d'o\`u le r\'esultat.
\end{proof}

L'inclusion canonique $\mu_{\infty}\sbt \BG_m$ induit 
 $$\psi: \tau_{\leq 1}R\pi_*\mu_{\infty}\to \tau_{\leq 1}R\pi_*\BG_m \in D^+(k)$$
 et donc $\psi_{*,2}:\  H^2_1(X,\mu_{\infty})\cong H^2(k,\tau_{\leq 1}R\pi_*\mu_{\infty})\to H^2(k, \tau_{\leq 1}R\pi_*\BG_m )\cong \Br_1(X)\sbt \Br(X)$.

\begin{prop}\label{lem21.2}
Soit $X$ une vari\'et\'e lisse g\'eom\'e\-triquement int\`egre.
Alors $\Br_{2/3}(X)=\Im(\psi_{*,2} )$ et on a une suite exacte:
$$\Pic(X)\to (\Pic(X_{\bk})_{free,ndiv})^{\Gamma_k} \to H^2_1(X,\mu_{\infty})\xrightarrow{\psi_{*,2}} \Br_{2/3}(X)\to 0,$$
o\`u $\Pic(X_{\bk})_{free,div}\sbt \Pic(X_{\bk})_{free}$ est le sous-groupe divisible maximal et 
$$\Pic(X_{\bk})_{free,ndiv}:=\Pic(X_{\bk})_{free}/\Pic(X_{\bk})_{free,div}.$$
\end{prop}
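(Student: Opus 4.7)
The plan is to apply hypercohomology to a distinguished triangle built from $\psi$ and then isolate the uniquely-divisible contributions by a torsion-freeness argument.

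First, I would form the distinguished triangle in $D^+(k)$
\[
\tau_{\leq 1}R\pi_*\mu_{\infty} \xrightarrow{\psi} \tau_{\leq 1}R\pi_*\BG_m \lra N \lra
\]
and compute $\CH^i(N)$ via the long exact sequence of cohomology sheaves induced by $\mu_{\infty}\hookrightarrow\BG_m$. Using $H^0(X_{\bk},\mu_{\infty})=\mu_{\infty}(\bk)$ and $H^1(X_{\bk},\mu_{\infty})=\Pic(X_{\bk})_{\tor}$ (the latter from the colimit over $n$ of the Kummer isomorphism $H^1(X_{\bk},\mu_n)=\Pic(X_{\bk})[n]$ over the algebraically closed field $\bk$), together with $H^0(X_{\bk},\BG_m)=\bk[X_{\bk}]^{\times}$ and $H^1(X_{\bk},\BG_m)=\Pic(X_{\bk})$, one obtains $\CH^0(N)=\bk[X_{\bk}]^{\times}/\mu_{\infty}(\bk)$ and $\CH^1(N)=\Pic(X_{\bk})_{free}$. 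Applying $\BH^*(k,-)$ together with the standard identifications $\BH^i(k,\tau_{\leq 1}R\pi_*\CF)\cong H^i(X,\CF)$ for $i\leq 1$, $\BH^2(k,\tau_{\leq 1}R\pi_*\mu_{\infty})=H^2_1(X,\mu_{\infty})$ and $\BH^2(k,\tau_{\leq 1}R\pi_*\BG_m)=\Br_1(X)$ (all obtained from the triangle $\tau_{\leq 1}\to R\pi_*\to\tau_{\geq 2}$ by checking that the relevant $\BH^j(k,\tau_{\geq 2})$ vanishes or agrees with the geometric invariants), I get
\[
H^1(X,\mu_{\infty})\lra\Pic(X)\lra\BH^1(k,N)\lra H^2_1(X,\mu_{\infty})\xrightarrow{\psi_{*,2}}\Br_1(X)\lra\BH^2(k,N).
\]
The composition $\Br_1(X)\to\BH^2(k,N)\to H^1(k,\Pic(X_{\bk})_{free})$ coming from the truncation $N\to\Pic(X_{\bk})_{free}[-1]$ is precisely the map $\phi_*$ of (\ref{sec2e2}), which immediately gives $\Im(\psi_{*,2})\sbt\Br_{2/3}(X)$.

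To match the stated sequence, I next strip off the uniquely-divisible contributions via the short exact triangle $N_{div}\to N\to\tilde N$, where $N_{div}$ has cohomology sheaves $\bk^{\times}/\mu_{\infty}(\bk)$ and $\Pic(X_{\bk})_{free,div}$ (both $\BQ$-vector spaces) and $\tilde N$ has $\CH^0=U_{X_{\bk}}:=\bk[X_{\bk}]^{\times}/\bk^{\times}$ (finitely generated free abelian) and $\CH^1=\Pic(X_{\bk})_{free,ndiv}$. The crucial observation is that $\BH^*(k,N_{div})$ is a $\BQ$-vector space, while $\Br_1(X)\sbt\Br(X)$ is torsion; a torsion-freeness diagram chase using the long exact sequence of the triangle $N_{div}\to N\to\tilde N$ then gives $\Ker[\Br_1(X)\to\BH^2(k,N)]=\Ker[\Br_1(X)\to\BH^2(k,\tilde N)]$. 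The parallel argument applied to the short exact sequence $0\to\Pic(X_{\bk})_{free,div}\to\Pic(X_{\bk})_{free}\to\Pic(X_{\bk})_{free,ndiv}\to 0$ (using that $H^1(k,\Pic(X_{\bk})_{free,div})$ is torsion-free) yields $\Br_{2/3}(X)=\Ker[\Br_1(X)\to H^1(k,\Pic(X_{\bk})_{free,ndiv})]$.

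Finally, the truncation triangle $U_{X_{\bk}}[0]\to\tilde N\to\Pic(X_{\bk})_{free,ndiv}[-1]$ identifies $\BH^1(k,\tilde N)$ (up to the image of $\Pic(X)$) with $(\Pic(X_{\bk})_{free,ndiv})^{\Gamma_k}$ and produces the map $\BH^2(k,\tilde N)\to H^1(k,\Pic(X_{\bk})_{free,ndiv})$; splicing with the LES of the first paragraph (with $N$ replaced by $\tilde N$) yields the stated four-term sequence, the map $\Pic(X)\to(\Pic(X_{\bk})_{free,ndiv})^{\Gamma_k}$ being the natural composition $\Pic(X)\to\Pic(X_{\bk})^{\Gamma_k}\to(\Pic(X_{\bk})_{free,ndiv})^{\Gamma_k}$. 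The main obstacle I expect is the torsion bookkeeping: one has to verify that the image of $\BH^2(k,U_{X_{\bk}})$ in $\BH^2(k,\tilde N)$ (which can carry torsion, since $U_{X_{\bk}}$ is a finitely generated free abelian $\Gamma_k$-module) does not introduce spurious elements into the comparison of kernels of the maps from $\Br_1(X)$ to $\BH^2(k,\tilde N)$ and to $H^1(k,\Pic(X_{\bk})_{free,ndiv})$. I expect this to follow from tracing through the compatible LES's for $N$, $N_{div}$ and $\tilde N$ and using that any such would-be spurious contribution is already captured by the image of $\Pic(X)$.
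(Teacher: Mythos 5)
Your overall architecture is the same as the paper's: take the cone $N$ of $\psi$, compute its cohomology sheaves, apply Galois hypercohomology, and strip off uniquely divisible pieces at the end. However, there is a concrete error at the very first step, and it is this error that manufactures the ``torsion bookkeeping'' obstacle you flag at the end --- an obstacle that does not exist in the correct computation. You assert $H^1(X_{\bk},\mu_{\infty})=\Pic(X_{\bk})_{\tor}$ and deduce $\CH^0(N)=\bk[X]^{\times}/\mu_{\infty}(\bk)$. The first identity fails for non-proper $X$: the Kummer sequence gives $0\to\bk[X]^{\times}/(\bk[X]^{\times})^n\to H^1(X_{\bk},\mu_n)\to\Pic(X_{\bk})[n]\to 0$, and in the colimit the left-hand terms contribute $\bk[X]^{\times}\otimes\BQ/\BZ\cong(\BQ/\BZ)^r$ with $r$ the rank of $\bk[X]^{\times}/\bk^{\times}$; for $X=\BG_m$ one gets $H^1(X_{\bk},\mu_{\infty})\cong\BQ/\BZ$ while $\Pic(X_{\bk})_{\tor}=0$. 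Since the proposition is applied throughout the paper to non-proper varieties (groups, torsors, open subsets of $\BA^1$), this case is essential. The correct computation --- this is exactly what the colimit diagram in the paper's proof does --- gives $\CH^0(N)\cong\colim\bigl(\bk[X]^{\times}\xrightarrow{x\mapsto x^m}\bk[X]^{\times}\bigr)\cong\bk[X]^{\times}\otimes\BQ$, a $\BQ$-vector space; hence $H^i(k,\CH^0(N))=0$ for all $i\geq 1$, and one reads off directly $\BH^1(k,N)\cong(\Pic(X_{\bk})_{free})^{\Gamma_k}$ and $\BH^2(k,N)\cong H^1(k,\Pic(X_{\bk})_{free})$. (Your $\CH^1(N)=\Pic(X_{\bk})_{free}$ is still correct, since the image of $H^1(X_{\bk},\mu_{\infty})$ in $\Pic(X_{\bk})$ is the torsion subgroup.)

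With your incorrect $\CH^0(N)$, the auxiliary complex $\tilde N$ has $\CH^0(\tilde N)=U_{X_{\bk}}\cong\BZ^r$, and the comparison of $\Ker[\Br_1(X)\to\BH^2(k,N)]$ with $\Ker[\Br_1(X)\to H^1(k,\Pic(X_{\bk})_{free,ndiv})]$ really is contaminated by $H^2(k,U_{X_{\bk}})$, which is in general a nonzero torsion group; this is not a bookkeeping nuisance you can expect to trace away, because nothing forces that contribution to be absorbed by the image of $\Pic(X)$. Moreover, the ``torsion-freeness diagram chase'' you invoke for the triangle $N_{div}\to N\to\tilde N$ is not sound as stated: the image of a $\BQ$-vector space such as $\BH^2(k,N_{div})$ inside a torsion group need not vanish (consider $\BQ\twoheadrightarrow\BQ/\BZ$), so torsion classes in $\Br_1(X)$ could a priori land in it. Both difficulties disappear once $\CH^0(N)$ is computed correctly: the degree-zero sheaf is already uniquely divisible and contributes nothing, and the only divisible piece that remains to be stripped off is $\Pic(X_{\bk})_{free,div}$ inside $\BH^1(k,N)\cong(\Pic(X_{\bk})_{free})^{\Gamma_k}$, which is the paper's short final step. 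In summary: right route, but the degree-zero cohomology sheaf of the cone must be redone.
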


\begin{proof}
 Soit $D:=\mathrm{Cone} (\psi)$. Par d\'efinition, $\CH^i(D)=0$ pour $i\neq -1,0,1$.
 On a un diagramme commutatif de suites exactes (compatible pour tout $n$):
 $$\xymatrix{\mu_n\ar[r]\ar[d]&\bk[X]^{\times}\ar[r]^{n\cdot}\ar[d]^=&\bk[X]^{\times}\ar[r]\ar[d]^{m\cdot }&H^1(X_{\bk},\mu_n)\ar[r]\ar[d]&\Pic(X_{\bk})\ar[d]^=&&\\
 \mu_{mn}\ar[r]\ar[d]&\bk[X]^{\times}\ar[r]^{mn\cdot}\ar[d]^=&\bk[X]^{\times}\ar[r]\ar[d]&H^1(X_{\bk},\mu_{mn})\ar[r]\ar[d]&\Pic(X_{\bk})\ar[d]^=&&\\
 \mu_{\infty}\ar@{^{(}->}[r]&\bk[X]^{\times}\ar[r]&\CH^0(D)\ar[r]&H^1(X_{\bk},\mu_{\infty})\ar[r]&\Pic(X_{\bk})\ar[r]&\CH^1(D)\ar[r]&0
 }$$
 Alors $\CH^{-1}(D)=0$, $\CH^1(D)\cong \Pic(X_{\bk})_{free}$.
 Puisque le passage \`a la limite directe est exact, on a $\CH^0(D)\cong \underrightarrow{lim}_{n\in \BZ_{>0}}I_n$ avec $I_n=\bk[X]^{\times}$ pour tout $n$ et 
 $I_n\to I_{mn}: \bk[X]^{\times}\to \bk[X]^{\times}: x\mapsto x^m.$
 Donc $\CH^0(D)\cong \bk[X]^{\times}\otimes \BQ$ et $H^i(k,\CH^0(D))=0$ pour $i\geq 1$.
 Donc 
 $$H^1(k,D)\cong (\Pic(X_{\bk})_{free})^{\Gamma_k}\ \ \ \text{et}\ \ \ H^2(k,D)\cong H^1(k,\Pic(X_{\bk})_{free}).$$
 
 En appliquant $H^i(k,-)$ \`a $$\tau_{\leq 1}R\pi_*\mu_{\infty}\to \tau_{\leq 1}R\pi_*\BG_m\to D\xrightarrow{+1},$$ nous obtenons $\Im(\psi_{*,2})=\Ker(\phi_*)$ et la suite exacte:
 $$\Pic(X)\xrightarrow{\phi_1} (\Pic(X_{\bk})_{free})^{\Gamma_k} \to H^2_1(X,\mu_{\infty})\xrightarrow{\psi_{*,2}} \Br_{2/3}(X)\to 0.$$
Puisque $\Pic(X_{\bk})_{free,div}$ est uniquement divisible, le groupe $(\Pic(X_{\bk})_{free,div})^{\Gamma_k}$ est aussi uniquement divisible et on a une suite exacte:
$$0\to (\Pic(X_{\bk})_{free,div})^{\Gamma_k}\to (\Pic(X_{\bk})_{free})^{\Gamma_k}\to (\Pic(X_{\bk})_{free,ndiv})^{\Gamma_k}\to 0.$$
Puisque $H^2_1(X,\mu_{\infty})$ est un groupe de torsion, on a $(\Pic(X_{\bk})_{free,div})^{\Gamma_k}\sbt \Im(\phi_1)$, d'o\`u le r\'esultat.
\end{proof}

\begin{cor}\label{lem21.2cor1}
Soit $X$ une vari\'et\'e lisse g\'eom\'e\-triquement int\`egre.
Soit $K/k$ une extension de corps telle que $k$ soit alg\'ebriquement clos dans $K$.
Alors $$\Br_{2/3}(X)/\Im \Br(k)\to \Br_{2/3}(X_K)/\Im \Br(K)$$ est injectif.
\end{cor}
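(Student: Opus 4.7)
Mon plan est d'utiliser la Proposition \ref{lem21.2} pour comparer les suites exactes pour $X$ et pour $X_K$, apr\`es quotient par $\Im\Br(k)$ et $\Im\Br(K)$, et de conclure par une chasse au diagramme. D'abord, je v\'erifierais que $\Im\Br(k)\sbt \Br_{2/3}(X)$: l'image de $\Br(k)$ dans $H^1(k,\Pic(X_{\bk}))$ est nulle par la description de Hochschild-Serre pour $\BG_m$, donc aussi dans $H^1(k,\Pic(X_{\bk})_{free})$. Comme $\Br(k)$ est de torsion en caract\'eristique $0$, on a $H^2(k,\mu_{\infty})\cong \Br(k)$. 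La Proposition \ref{lem21.2}, quotient\'ee par l'image de $H^2(k,\mu_{\infty})$, donne la suite exacte
$$(\Pic(X_{\bk})_{free,ndiv})^{\Gamma_k}/\Im\Pic(X)\to H^2_1(X,\mu_{\infty})/\Im H^2(k,\mu_{\infty})\to \Br_{2/3}(X)/\Im\Br(k)\to 0,$$
et son analogue pour $X_K$. En notant $a,b,c$ les fl\`eches verticales induites par changement de base, une chasse au diagramme r\'eduit l'injectivit\'e de $c$ \`a deux points: (i) $b$ est injectif; (ii) $a$ est surjectif.

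Pour (i), je m'appuierais sur la suite spectrale de Hochschild-Serre $H^p(k,H^q(X_{\bk},\mu_{\infty}))\Rightarrow H^{p+q}(X,\mu_{\infty})$: celle-ci fournit un plongement $H^2_1(X,\mu_{\infty})/\Im H^2(k,\mu_{\infty})\hookrightarrow H^1(k,H^1(X_{\bk},\mu_{\infty}))$, et de m\^eme pour $X_K$. Par invariance de la cohomologie \'etale sous extension de corps alg\'ebriquement clos (faisceaux de torsion sur sch\'emas de type fini), on a $H^1(X_{\bk},\mu_{\infty})\iso H^1(X_{\bar K},\mu_{\infty})$ comme $\Gamma_K$-modules, l'action se factorisant par la surjection $\Gamma_K\twoheadrightarrow \Gamma_k$ (qui existe puisque $k$ est alg\'ebriquement clos dans $K$). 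La fl\`eche correspondante $H^1(\Gamma_k,M)\to H^1(\Gamma_K,M)$ est l'inflation, qui est injective par inflation-restriction.

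Pour (ii), il suffira de montrer que l'application naturelle $(\Pic(X_{\bk})_{free,ndiv})^{\Gamma_k}\to (\Pic(X_{\bar K})_{free,ndiv})^{\Gamma_K}$ est un isomorphisme, car l'inclusion $\Im\Pic(X)\sbt \Im\Pic(X_K)$ sous cette identification donnera alors la surjectivit\'e de $a$. Pour cela, je prendrai une compactification projective lisse $\bar X$ de $X$ (par r\'esolution des singularit\'es en caract\'eristique $0$) dont le bord $D=\sum D_i$ est d\'efini sur $k$; alors $\Pic(X_{\bullet})=\Pic(\bar X_{\bullet})/\langle[D_i]\rangle$ pour $\bullet\in\{\bk,\bar K\}$. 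Comme $\Pic^0(\bar X_{\bullet})$ est divisible (points d'une vari\'et\'e ab\'elienne sur un corps alg\'ebriquement clos) et $\NS(\bar X_{\bk})\iso \NS(\bar X_{\bar K})$ (invariance du groupe de N\'eron-Severi sous extension alg\'ebriquement close), un calcul direct de groupes divisibles montrera que
$$\Pic(X_{\bullet})_{free,ndiv}\cong (\NS(\bar X_{\bullet})/\langle [D_i]\rangle)_{free}$$
canoniquement et de mani\`ere $\Gamma$-\'equivariante, d'o\`u l'isomorphisme cherch\'e.

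L'\'etape la plus d\'elicate sera sans doute (ii), car il faudra manipuler avec soin les d\'ecompositions divisible/non-divisible du groupe de Picard, en utilisant notamment que le conoyau $\Pic(X_{\bar K})/\Pic(X_{\bk})$ est uniquement divisible (un $\mathbb{Q}$-espace vectoriel); mais elle se ram\`ene \`a des r\'esultats standard d'invariance sous extension de corps alg\'ebriquement clos, tandis que (i) est une application routini\`ere de la suite spectrale et de l'inflation-restriction.
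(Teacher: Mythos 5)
Your proposal is correct and follows essentially the same route as the paper: both quotient the exact sequence of the proposition \ref{lem21.2} by $\Im\Br(k)$ and $\Im\Br(K)$, prove the injectivity of the middle vertical arrow via the Hochschild--Serre spectral sequence together with the invariance of $H^1(X_{\bk},\mu_{\infty})$ under the extension $\bar{K}/\bk$, deduce the surjectivity of the left vertical arrow from the unique divisibility of $\Coker(\Pic(X_{\bk})\to\Pic(X_{\bar{K}}))$, and conclude by a diagram chase. The only difference is that your step (ii) spells out, via a smooth compactification and the invariance of the N\'eron--Severi group, what the paper asserts in a single line.
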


\begin{proof}
Puisque $H^2(k,\mu_{\infty})\cong \Br(k)$, la proposition \ref{lem21.2} donne un diagramme de suites exactes:
$$\xymatrix{(\Pic(X_{\bk})_{free,ndiv})^{\Gamma_k}\ar[r]\ar[d]^{\phi_1} & H^2_1(X,\mu_{\infty})/\Im H^2(k,\mu_{\infty})\ar[r]\ar[d]^{\phi_2}&
\Br_{2/3}(X)/\Im \Br(k)\ar[r]\ar[d]^{\phi_3}&0\\
(\Pic(X_{\bar{K}})_{free,ndiv})^{\Gamma_K}\ar[r]&H^2_1(X_K,\mu_{\infty})/\Im H^2(K,\mu_{\infty})\ar[r] &\Br_{2/3}(X_K)/\Im \Br(K)\ar[r] &0.
}$$
La suite spectrale $E_2^{i,j}:=H^i(k,H^j(X_{\bk},\mu_{\infty}))\Rightarrow H^{i+j}(X,\mu_{\infty})$ induit une suite exacte
$$H^2(k,\mu_{\infty})\to H^2_1(X,\mu_{\infty})\to H^1(k,H^1(X_{\bk},\mu_{\infty})).$$
Puisque $H^1(X_{\bk},\mu_{\infty})\cong H^1(X_{\bar{K}},\mu_{\infty})$ et l'homomorphisme $\Gal(\bar{K}/K)\to \Gal(\bar{k}/k)$ est surjectif,
l'homomorphisme de inflation $H^1(k,H^1(X_{\bk},\mu_{\infty}))\to H^1(K,H^1(X_{\bar{K}},\mu_{\infty}))$ est injectif 
et donc $\phi_2$ est injectif.
Puisque $\Coker(\Pic(X_{\bk})\to \Pic(X_{\bar{K}}))$ est uniquement divisible, on a 
$ \Pic(X_{\bk})_{free,ndiv}\cong \Pic(X_{\bar{K}})_{free,ndiv}$ et donc $\phi_1$ est un isomorphisme.
Ainsi $\phi_3$ est injectif.
\end{proof}

\bigskip

Soit $G$ un groupe alg\'ebrique connexe. Notons $\pi: G\to \Spec\ k$.
On d\'efinit: $$H^1_e(G,\mu_{\infty}):=\Ker(H^1(G,\mu_{\infty})\xrightarrow{e^*} H^1(k,\mu_{\infty})) ,$$
$$ H^2_{1,e}(G,\mu_{\infty}):=\Ker(H^2_1(G,\mu_{\infty})\xrightarrow{e^*} H^2(k,\mu_{\infty}))$$
et $$\Br_{2/3,e}(G):=\Ker(\Br_{2/3}(G)\xrightarrow{e^*} \Br(k)).$$

Le point $e_G\in G(k)$ induit des isomorphismes: $\Br_{2/3}(G)\cong \Br(k)\oplus \Br_{2/3,e}(G)$,
$$ H^1(G,\mu_{\infty})\cong H^1(k,\mu_{\infty})\oplus H^1_e(G,\mu_{\infty})\ \ \ \text{et}\ \ \   H^2_1(G,\mu_{\infty})\cong H^2(k,\mu_{\infty})\oplus H^2_{1,e}(G,\mu_{\infty}).$$

\begin{cor}\label{prop21.2lem}
On a  $H^1_e(G,\mu_{\infty})\cong H^1(G_{\bk},\mu_{\infty})^{\Gamma_k}$,  $ H^2_{1,e}(G,\mu_{\infty})\cong H^1(k,H^1(G_{\bk},\mu_{\infty}))$
 et  une suite exacte: 
$$\Pic(G)\to  (\Pic(G_{\bk})_{free,ndiv})^{\Gamma_k} \to H^2_{1,e}(G,\mu_{\infty}) \to \Br_{2/3,e}(G)\to 0.$$
\end{cor}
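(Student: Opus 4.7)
The plan is to combine the Leray--Hochschild--Serre spectral sequence
$$E_2^{p,q} = H^p(k, H^q(G_{\bk}, \mu_{\infty})) \Rightarrow H^{p+q}(G, \mu_{\infty})$$
with Proposition \ref{lem21.2} applied to $X=G$. Since $G$ is connected, $H^0(G_{\bk},\mu_{\infty})=\mu_{\infty}$ and $E_2^{i,0}=H^i(k,\mu_{\infty})$. The key input is the identity section $e_G\in G(k)$: by functoriality of Hochschild--Serre, $e_G^*$ splits the spectral sequence for $\pi:G\to\Spec\ k$ off that of $\id_{\Spec\ k}$, so $\pi^*:H^i(k,\mu_{\infty})\to H^i(G,\mu_{\infty})$ is split-injective for every $i$. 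Consequently the edge map $E_2^{i,0}\twoheadrightarrow E_{\infty}^{i,0}$ is an isomorphism and every differential landing in $E_r^{i,0}$ (namely $d_2:E_2^{i-2,1}\to E_2^{i,0}$) vanishes.

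For the first identification, the five-term exact sequence together with the injectivity of $\pi^*:H^2(k,\mu_{\infty})\hookrightarrow H^2(G,\mu_{\infty})$ yields a short exact sequence
$$0\to H^1(k,\mu_{\infty})\to H^1(G,\mu_{\infty})\to H^1(G_{\bk},\mu_{\infty})^{\Gamma_k}\to 0,$$
and taking the kernel of $e_G^*$ gives $H^1_e(G,\mu_{\infty})\cong H^1(G_{\bk},\mu_{\infty})^{\Gamma_k}$. For the second, note first that $H^2_1(G,\mu_{\infty})=F^1H^2(G,\mu_{\infty})$ in the Hochschild--Serre filtration, because the map $H^2(G,\mu_{\infty})\to H^2(G_{\bk},\mu_{\infty})$ factors through $E_{\infty}^{0,2}\hookrightarrow E_2^{0,2}\hookrightarrow H^2(G_{\bk},\mu_{\infty})$. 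The filtration then provides a short exact sequence $0\to E_{\infty}^{2,0}\to H^2_1(G,\mu_{\infty})\to E_{\infty}^{1,1}\to 0$. The previous step gives $E_{\infty}^{2,0}=E_2^{2,0}=H^2(k,\mu_{\infty})$; as for $E_{\infty}^{1,1}$, no differentials land in $E_r^{1,1}$ (since $E_r^{1-r,r}=0$ for $r\geq 2$), and the only outgoing one $d_2:E_2^{1,1}\to E_2^{3,0}$ vanishes by the same argument applied to $i=3$. Hence $E_{\infty}^{1,1}=E_2^{1,1}=H^1(k,H^1(G_{\bk},\mu_{\infty}))$, and splitting by $e_G^*$ gives $H^2_{1,e}(G,\mu_{\infty})\cong H^1(k,H^1(G_{\bk},\mu_{\infty}))$.

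For the final four-term exact sequence, I would apply Proposition \ref{lem21.2} to $X=G$ and take kernels of $e_G^*$ termwise. Naturality of that sequence with respect to $e_G:\Spec\ k\to G$ yields a commutative diagram with the corresponding sequence for $\Spec\ k$; since $\Pic(\Spec\ k)=\Pic(\Spec\ \bk)=0$, the first two terms of the latter vanish, while the Kummer identification $H^2(k,\mu_{\infty})\cong\Br(k)$ (valid since $\Br(k)$ is torsion in characteristic $0$) makes its third map an isomorphism and its fourth term zero. Passing to kernels of $e_G^*$ therefore produces
$$\Pic(G)\to(\Pic(G_{\bk})_{free,ndiv})^{\Gamma_k}\to H^2_{1,e}(G,\mu_{\infty})\to\Br_{2/3,e}(G)\to 0,$$
which is the asserted exact sequence.

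The main obstacle is essentially bookkeeping: one has to check the compatibility of the Hochschild--Serre filtration, of the Kummer map $\psi$ and of the Picard decompositions with the pullback by $e_G$, but each of these compatibilities is automatic from the functoriality of $R\pi_*$, of the truncation functors and of the Kummer triangle.
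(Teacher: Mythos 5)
Your proof is correct and follows essentially the same route as the paper: the paper also deduces both isomorphisms from the spectral sequence $H^p(k,R^q\pi_*\mu_{\infty})\Rightarrow H^{p+q}(G,\mu_{\infty})$ split by the section $e_G$ (packaged there as the decomposition $\tau_{\leq 1}R\pi_*\mu_{\infty}\cong \mu_{\infty}\oplus H^1(G_{\bk},\mu_{\infty})[-1]$, which is just a cleaner form of your degeneration argument), and then obtains the four-term exact sequence from Proposition \ref{lem21.2} together with $H^2(k,\mu_{\infty})\cong\Br(k)$. No gaps; your extra care about which differentials vanish and about naturality with respect to $e_G$ only makes explicit what the paper leaves implicit.
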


\begin{proof}
Puisque $R^0\pi_*\mu_{\infty}\cong \mu_{\infty}$ et $R^1\pi_*\mu_{\infty}\cong H^1(G_{\bk},\mu_{\infty})$,
le point $e_G\in G(k)$ induit 
$$\tau_{\leq 1}R\pi_*\mu_{\infty}\cong \mu_{\infty}\oplus \left(H^1(G_{\bk},\mu_{\infty})[-1]\right).$$
La suite spectrale $E_2^{p,q}:=H^p(k,R^q\pi_*\mu_{\infty})\Rightarrow H^{p+q}(G,\mu_{\infty})$ donne les isomorphismes dans l'\'enonc\'e.
Puisque $H^2(k,\mu_{\infty})\cong \Br(k)$, la suite exacte d\'ecoule de la proposition \ref{lem21.2}.
\end{proof}

D'apr\`es la formule de K\"unneth (cf. \cite[Prop. 2.6]{C5}), pour toute vari\'et\'e lisse g\'eom\'e\-triquement int\`egre $X$, l'inclusion $X\to G\times X: x\mapsto (e_G,x)$ induit
$$ \tau_{\leq 1}R\pi_{G\times X,*}\mu_{\infty}\cong   (\tau_{\leq 1}R\pi_{X,*}\mu_{\infty})\oplus  \left(H^1(G_{\bk},\mu_{\infty})[-1]\right),$$
o\`u $\pi_{G\times X}: G\times X\to \Spec\ k$ et $\pi_X: X\to \Spec\ k$.
 La suite spectrale $$E_2^{p,q}:=H^p(k,R^q\pi_{G\times X,*}\mu_{\infty})\Rightarrow H^{p+q}(G\times X,\mu_{\infty})$$ induit deux isomorphismes:
\begin{equation}\label{prop21.1e2}
(p_1^*,p_2^*):\ H^1_e(G,\mu_{\infty})\oplus H^1(X,\mu_{\infty}) \cong H^1(G\times X, \mu_{\infty}) 
\end{equation}
et
\begin{equation}\label{prop21.1e1}
(p_1^*,p_2^*):\ H^2_{1,e}(G,\mu_{\infty})\oplus H^2_1(X,\mu_{\infty}) \cong H^2_1(G\times X, \mu_{\infty}) .
\end{equation}
o\`u $p_1: G\times X\to G$, $p_2: G\times X\to X$ sont deux projections.

S'il existe une action $\rho: G\times X\to X$, alors (\ref{prop21.1e2}) induit un homomorphisme
\begin{equation}\label{prop21.1e3}
\lambda_{H^1}: H^1(X,\mu_{\infty}) \to H^1_e(G,\mu_{\infty})
\end{equation}
tel que $p_1^*\circ \lambda_{H^1}=\rho^*-p_2^* : H^1(X,\mu_{\infty})\to H^1(G\times X,\mu_{\infty})$.

\section{Propri\'et\'es du sous-groupe invariant}

Dans toute cette section,  $k$ est un corps quelconque de caract\'eristique $0$. 
Sauf  mention explicite du contraire,  une vari\'et\'e est une $k$-vari\'et\'e.

Soient $G$ un groupe alg\'ebrique connexe et $(X,\rho)$ une $G$-vari\'et\'e lisse g\'eom\'etriquement connexe.
 Soient $p_1: G\times X\to G$, $p_2: G\times X\to X$ deux projections.
 Dans cette section, on \'etablit les propri\'et\'es fondamentales de $H^2_G(X,\mu_{\infty})$ (proposition \ref{prop21.1}) et de $\Br_G'(X)$ (proposition \ref{prop21.2}).

\subsection{Groupe alg\'ebrique sur un corps alg\'ebriquement clos} 
Dans cette partie, pour tout groupe alg\'ebrique connexe $G$ sur un corps alg\'ebriquement clos de caract\'eristique $0$, en utilisant la cohomologie singuli\`ere de certain groupes de Lie (le th\'eor\`eme \ref{thmCartan1} ci-dessous), on montre que $H^2_G(G,\mu_{\infty})=0$ (le corollaire \ref{corCartan1} (i)).

Rappelons le th\'eor\`eme classique de \'Elie Cartan.

\begin{thm}(\'Elie Cartan)\label{thmCartan1}
Soit $H$ un groupe de Lie connexe. Alors $\pi_2^{top}(H)=0$ et les cohomologies singuli\`eres $H^i_{sing}(H,\BQ/\BZ)\cong H^i(\pi_1^{top}(H),\BQ/\BZ)$ pour $i=1,2$.
\end{thm}

\begin{proof}
Pour $\pi_2^{top}(H)$, d'apr\`es \cite[Thm. 6]{Iw} (cf. \cite[pp. 116, Rem. 2]{Mil}), 
tout groupe de Lie connexe contient un sous-groupe de Lie compact comme une r\'etraction par d\'eformation. 
Donc on peut supposer que $H$ est compact et, dans ce cas, l'\'enonc\'e ci-dessus est exactement le \cite[Prop. V.7.5]{BrD}.

Pour $H^i_{sing}(H,\BQ/\BZ)$, soit $U\to H$ un rev\^etement universel. Alors $\pi_1^{top}(U)=\pi_2^{top}(U)=0$. L'application de Hurewicz (cf. \cite[Thm. A.5]{Sr}) et le th\'eor\`eme des coefficients universels (cf. \cite[Thm. A.15]{Sr}) impliquent $H^1_{sing}(U,\BQ/\BZ)=H^2_{sing}(U,\BQ/\BZ)=0$.
Ensuite la suite spectrale de Leray-Serre (cf. \cite[Thm. A.24]{Sr}) implique l'\'enonc\'e.
\end{proof}

Soit $G$ un $k$-groupe alg\'ebrique connexe.
Notons $p_1: G\times X\to G$, $p_2: G\times X\to X$ deux projections et $m: G\times G\to G$ la multiplication.

Dans le cas o\`u $k=\BC$, d\'efinissons:
$$H^2_{sing,G}(G(\BC),\BQ/\BZ):=\{ b\in H^2_{sing}(G(\BC),\BQ/\BZ)\ :\ (m^*(b)-p_2^*(b))\in p_1^*H^2_{sing}(G(\BC),\BQ/\BZ) \}.$$

\begin{prop}\label{propCartan1}
Soit $G$ un $\BC$-groupe alg\'ebrique connexe. 
Alors $H^2_{sing,G}(G(\BC),\BQ/\BZ)=0$. 
\end{prop}

\begin{proof}

Soit $\pi:=\pi_1^{\mathrm{top}}(G)$. Pour tout $\BZ$-module $M$, notons $M^D:=\Hom(M,\BQ/\BZ)$.
 D'apr\`es le th\'eor\`eme \ref{thmCartan1} et le lemme \ref{prop21.1-lem} ci-dessous, on a l'inclusion naturelle 
 $$H^2_{sing}(G(\BC),\BQ/\BZ)\cong H^2(\pi,\BQ/\BZ) \sbt (\pi\otimes_{\BZ}\pi)^D,$$
 et donc l'inclusion natuelle $H^2_{sing}((G\times G)(\BC),\BQ/\BZ)\sbt ((\pi\oplus \pi)\otimes_{\BZ}(\pi\oplus\pi))^D$.

Notons $m_*,p_{1,*},p_{2,*}: \pi^{top}_1(G\times G)\to \pi^{top}_1(G)$ les homomorphismes induits par $m,p_1,p_2$.
Puisque $m|_{e_G\times G}=id_G$ et $m|_{G\times e_G}=id_G$, on a $m_*: \pi\oplus \pi\to \pi: (a,b)\to a+b$ et 
$$H^2_{sing,G}(G(\BC),\BQ/\BZ)=\Ker(m^*-p_1^*-p_2^*:\ H^2_{sing}(G(\BC),\BQ/\BZ)\to H^2_{sing}((G\times G)(\BC),\BQ/\BZ)). $$
L'homomorphisme $m_*-p_{1,*}-p_{2,*}: (\pi\oplus \pi)\otimes_{\BZ}(\pi\oplus\pi)\to \pi\otimes \pi$ envoie $(a,b)\otimes (c,d)$ sur
$$(a+b)\otimes (c+d)-a\otimes c-b\otimes d=a\otimes d+b\otimes c.  $$
Donc l'application $m_*-p_{1,*}-p_{2,*}$ est surjective et 
$$(m_*-p_{1,*}-p_{2,*})^D: (\pi\otimes_{\BZ} \pi)^D  \to ((\pi\oplus \pi)\otimes_{\BZ}(\pi\oplus\pi))^D$$
 est injective.
  Ainsi $m^*-p_1^*-p_2^*$ est injective et $H^2_{sing,G}(G(\BC),\BQ/\BZ)=0$.
\end{proof}

\begin{lem}\label{prop21.1-lem}
(1) Soient $\pi^1$, $\pi^2$ deux groupes.
Alors $$H^2(\pi^1,\BQ/\BZ)\oplus H^2(\pi^2,\BQ/\BZ) \oplus [((\pi^1)^{ab}\otimes (\pi^2)^{ab})^D] \cong H^2(\pi^1\times \pi^2,\BQ/\BZ),$$
o\`u $(-)^{ab}$ est le quotient ab\'elien maximal.

(2) Soit $\pi$ un groupe ab\'elien de type fini. Alors $H^1(\pi,\BQ/\BZ)\cong \pi^D$ et on a une inclusion naturelle 
$H^2(\pi,\BQ/\BZ)\sbt (\pi\otimes_{\BZ}\pi)^D.$
\end{lem}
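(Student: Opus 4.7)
The plan is to reduce both parts to statements about integral homology via the universal coefficient theorem. Since $\BQ/\BZ$ is divisible, hence injective as a $\BZ$-module, we have $\Ext^1(-,\BQ/\BZ)=0$, so universal coefficients yields $H^n(\pi,\BQ/\BZ)\cong H_n(\pi,\BZ)^D$ for any group $\pi$, where $(-)^D:=\Hom(-,\BQ/\BZ)$ is an exact contravariant functor commuting with finite direct sums.

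For (1), I would apply the K\"unneth formula in integral homology to $\pi^1\times \pi^2$. In degree $2$, all Tor contributions vanish since one of the two homology groups involved is always $H_0=\BZ$, which is torsion-free. This yields
$$H_2(\pi^1\times \pi^2,\BZ)\cong H_2(\pi^1)\oplus H_2(\pi^2)\oplus H_1(\pi^1)\otimes H_1(\pi^2).$$
Using $H_1(\pi^i)=(\pi^i)^{\ab}$ and dualizing gives (1).

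For (2), the identification $H^1(\pi,\BQ/\BZ)\cong \pi^D$ is immediate from universal coefficients, using $\pi^{\ab}=\pi$. For the inclusion $H^2(\pi,\BQ/\BZ)\hookrightarrow (\pi\otimes_{\BZ}\pi)^D$, by duality it suffices to exhibit a natural surjection $\pi\otimes_{\BZ}\pi\twoheadrightarrow H_2(\pi,\BZ)$. I would construct this as the composite of the K\"unneth embedding $\pi\otimes_{\BZ}\pi=H_1(\pi)\otimes H_1(\pi)\hookrightarrow H_2(\pi\times \pi)$ furnished by (1) with the push-forward $m_*\colon H_2(\pi\times \pi)\to H_2(\pi)$ along the multiplication $m\colon \pi\times \pi\to \pi$, i.e.\ the Pontryagin product.

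The hard part will be showing surjectivity of this composite. I would argue by reducing to cyclic factors: decompose $\pi\cong \BZ^r\oplus \bigoplus_i \BZ/n_i$ and iterate (1). Since $H_2(\BZ,\BZ)=H_2(\BZ/n,\BZ)=0$, the whole group $H_2(\pi,\BZ)$ is produced by the cross-terms $H_1\otimes H_1$ in the iterated K\"unneth decomposition, and each such cross-term is the image of a pure tensor in $\pi\otimes \pi$ under the Pontryagin product. This amounts to the classical identification $H_2(\pi,\BZ)\cong \pi\wedge \pi$ for a finitely generated abelian $\pi$, from which injectivity of the dual map is immediate.
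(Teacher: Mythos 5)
Your argument is correct. For part (1) it is essentially the paper's proof: both rest on the universal coefficient isomorphism $H^n(\pi,\BQ/\BZ)\cong H_n(\pi,\BZ)^D$ (valid since $\BQ/\BZ$ is injective) combined with the K\"unneth decomposition of $H_2(\pi^1\times\pi^2,\BZ)$, which the paper cites directly from Weibel and you rederive by noting that the $\mathrm{Tor}$ terms in degree $2$ involve $H_0=\BZ$ and hence vanish. For part (2) you take a genuinely different route. The paper constructs no map between $H_2(\pi,\BZ)$ and $\pi\otimes\pi$: it fixes a decomposition $\pi\cong\oplus_i M_i$ into cyclic groups, computes $H^2(\pi,\BQ/\BZ)\cong\oplus_{i<j}(M_i\otimes M_j)^D$ by iterating (1) together with $H^2(\BZ/n,\BQ/\BZ)=0$, and then writes down an explicit inclusion into $\oplus_{i,j}(M_i\otimes M_j)^D=(\pi\otimes\pi)^D$ by symmetric extension with zero diagonal. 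You instead dualize a surjection $\pi\otimes\pi\twoheadrightarrow H_2(\pi,\BZ)$ given by the Pontryagin product $m_*\circ(\times)$, proving surjectivity by the same reduction to cyclic factors (equivalently, the classical identification $H_2(\pi,\BZ)\cong\Lambda^2\pi$). Both are complete and of comparable length. What your construction buys is functoriality in $\pi$: the paper's inclusion depends on the chosen decomposition, so the adjective \emph{naturelle} in the statement is really only earned by a construction like yours; this is relevant in the proof of the proposition \ref{prop21.1} (2), where the inclusion must be compatible with $m^*$, $p_1^*$, $p_2^*$ on $H^2(G\times G,\BQ/\BZ)$ --- automatic for the dual of the Pontryagin product, but requiring a small verification for the coordinate-dependent map. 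What the paper's version buys is that it never needs surjectivity of the Pontryagin product, the one step your write-up compresses, although your cross-term argument does close that gap correctly.
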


\begin{proof}
Pour tout $\BZ$-module $M$, notons $M^D:=\Hom(M,\BQ/\BZ)$.

Soit $\pi$ un groupe.
D'apr\`es \cite[Thm. 6.1.11]{We}, on a $H_1(\pi,\BZ)\cong \pi/[\pi,\pi]\cong (\pi)^{ab}$. 
D'apr\`es \cite[Exer. 6.1.5]{We}, pour tout $i\geq 0$, on a $H^i(\pi,\BQ/\BZ)\cong H_i(\pi,\BZ)^D$.
Alors $H^1(\pi,\BQ/\BZ)\cong (\pi^{ab})^D$.

D'apr\`es \cite[Prop. 6.1.13]{We}, on a: $H_1(\pi^1,\BZ)\oplus H_1(\pi^2,\BZ)\cong H_1(\pi^1\times \pi^2,\BZ)$ et
$$H_2(\pi^1,\BZ)\oplus H_2(\pi^2,\BZ)\oplus [H_1(\pi^1,\BZ)\otimes_{\BZ}H_1(\pi^2,\BZ)] \cong H_2(\pi^1\times \pi^2,\BZ).$$
Donc $H^1(\pi^1,\BQ/\BZ)\oplus H^1(\pi^2,\BQ/\BZ)\cong H^1(\pi^1\times \pi^2,\BQ/\BZ)$ et
$$H^2(\pi^1,\BQ/\BZ)\oplus H^2(\pi^2,\BQ/\BZ) \oplus [H_1(\pi^1,\BZ)\otimes_{\BZ}H_1(\pi^2,\BZ)]^D \cong H^2(\pi^1\times \pi^2,\BQ/\BZ), $$
 d'o\`u l'on d\'eduit (1).
 
 Pour (2), on a $\pi\cong \oplus_{i=1}^rM_i$ avec $M_i\cong \BZ/n_i$ pour certain $n_i\in \BZ_{\geq 0}$ (on a $M_i\cong \BZ$ si $n_i=0$).
Ainsi $(\pi\otimes \pi)^D\cong \oplus_{1\leq i\leq r,1\leq j\leq r} [(M_i\otimes M_j)^D]$.
 Puisque $H^2(\BZ/n,\BQ/\BZ)=0$ pour tout $n\in \BZ_{\geq 0}$ (\cite[Thm. 6.2.2 et Cor. 6.2.7]{We}), d'apr\`es (1), on a
 $$H^2(\pi,\BQ/\BZ)\cong \oplus_{1\leq i<j\leq r}[(M_i\otimes M_j)^D].$$
 On a l'inclusion canonique $\oplus_{ i<j}[(M_i\otimes M_j)^D]\sbt \oplus_{i,j} [(M_i\otimes M_j)^D]: (m_{i,j})_{i<j}\mapsto (n_{i,j})_{i,j}$ o\`u $n_{i,j}=n_{j,i}=m_{i,j}$ pour $i<j$ et $n_{i,i}=0$. Ceci donne (2).
\end{proof}

\begin{cor}\label{corCartan1}
Soit $k$ un corps alg\'ebriquement clos de caract\'eristique $0$. Alors:

(i) pour tout $k$-groupe alg\'ebrique connexe $G$, on a $H^2_G(G,\mu_{\infty})=0$;

(ii) pour toute suite exacte $1\to G_1\to G_2\to T\to 1$ de $k$-groupes alg\'ebriques connexes avec $T$ un tore, on a une suite exacte canonique:
$$0\to H^1(T,\BQ/\BZ)\to H^1(G_2,\BQ/\BZ)\to H^1(G_1,\BQ/\BZ)\to 0 .$$
\end{cor}

\begin{proof}
Pour (i), par le passage \`a la limite (cf. \cite[\S 1.10]{Fu}), il existe un sous-corps $k_0\subset k$ et un $k_0$-groupe alg\'ebrique connexe $G_0$ tels que $k_0$ soit une extension de type fini sur $\BQ$ et $G_{0,k}\cong G$. 
Choisissons une immersion $\bar{k}_0\subset \BC$.
 D'apr\`es \cite[Thm. III.3.12]{Mi80} et \cite[Cor. VI.4.3]{Mi80}, on a des isomorphismes naturels: 
 $$H^2(G,\BQ/\BZ)\cong H^2(G_{0,\bar{k}_0},\BQ/\BZ) \cong H^2(G_{0,\BC},\BQ/\BZ)\cong  H^2_{sing}(G_{0,\BC}(\BC),\BQ/\BZ).$$
 Puisque $\mu_{\infty}=\BQ/\BZ$, on a $H^2_G(G,\mu_{\infty})=H^2_{sing,G_{0,\BC}}(G_{0,\BC}(\BC),\BQ/\BZ).$
Donc on peut supposer que $k=\BC$ et il suffit de montrer que $H^2_{sing,G}(G(\BC),\BQ/\BZ)=0$. Ainsi la proposition  \ref{propCartan1} implique (i).

 Pour (ii), par le m\^eme argument comme ci-dessus, on peut supposer que $k=\BC$ et il suffit de montrer que 
 \begin{equation}\label{eqcorCartan1-e}
  0\to H^1_{sing}(T(\BC),\BQ/\BZ)\to H^1_{sing}(G_2(\BC),\BQ/\BZ)\to H^1_{sing}(G_1(\BC),\BQ/\BZ)\to 0  
  \end{equation}
 est exacte.
 On a une suite exacte $0\to \pi_1^{top}(G_1)\to \pi_1^{top}(G_2)\to \pi_1^{top}(T)\to 0$ de groupes ab\'eliens, qui admet une section (non canonique), parce que $\pi_1^{top}(T)\cong \BZ^n$. 
 D'apr\`es le th\'eor\`eme \ref{thmCartan1}, on a 
 $$H^1_{sing}(G(\BC),\BQ/\BZ)=H^1(\pi_1^{top}(G),\BQ/\BZ)=\Hom(\pi_1^{top}(G),\BQ/\BZ)$$
  pour $G=G_1,G_2$ ou $T$, d'o\`u on obtient (\ref{eqcorCartan1-e}) et (ii).
\end{proof}

\subsection{Le cas g\'en\'eral}

\begin{prop}\label{prop21.1}
Soient $G$ un groupe alg\'ebrique connexe et $(X,\rho)$ une $G$-vari\'et\'e lisse g\'eom\'etriquement connexe.
Alors  

(1)  on a: $H^2_1(X,\mu_{\infty})\sbt H^2_G(X,\mu_{\infty})$;

(2) si $X$ est un $G$-torseur, on a  $H^2_1(X,\mu_{\infty})=H^2_G(X,\mu_{\infty})$;

(3) il existe un homomorphisme unique (\emph{l'homomorphisme de Sansuc}) $$\lambda: H^2_G(X,\mu_{\infty})\to H^2_{1,e}(G,\mu_{\infty})$$
tel que $\rho^*-p_2^*=p_1^*\circ \lambda$;

(4) pour tout $x\in X(k)$, on a $\lambda =\rho_x^*-i_x^*: H^2_G(X,\mu_{\infty})\to H^2_1(G,\mu_{\infty})$,
o\`u $i_x:G\to X: g\mapsto x$ et $\rho_x: G\to X: g\mapsto g\cdot x$.
\end{prop}

\begin{proof}
D'apr\`es (\ref{prop21.1e1}), en utilisant $\rho |_{e_G\times X}=id_X$, on a (1).

Pour (2), par fonctorialit\'e, il suffit de montrer que $H^2_{G_{\bk}}(X_{\bk},\mu_{\infty})=0$.
On peut supposer que $k=\bk$. 
Dans ce cas, $X\cong G$, $\mu_{\infty}\cong \BQ/\BZ$ et $\rho=m: G\times G\to G$ la multiplication.
D'apr\`es le corollaire \ref{corCartan1} (i), $H^2_G(G,\BQ/\BZ)=0$, d'o\`u l'on d\'eduit (2).

Pour (3), d'apr\`es (\ref{prop21.1e1}), $p_1^*|_{H^2_{1,e}(G,\mu_{\infty})}$ est injective et donc il suffit de montrer que 
$$(\rho^*-p_2^*)(H^2_G(X,\mu_{\infty})) \sbt p_1^*(H^2_{1,e}(G,\mu_{\infty})).$$
Puisque $(\rho^*-p_2^*)|_{e_G\times X}=0$, d'apr\`es (\ref{prop21.1e1}), il suffit de montrer que 
$$(\rho^*-p_2^*)(H^2_G(X,\mu_{\infty})) \sbt H^2_1(G\times X,\mu_{\infty}).$$
On peut supposer que $k=\bk$. 
Un point $x\in X(k)$ induit un morphisme $G\xrightarrow{\rho_x} X:\ g\mapsto g\cdot x$. 
Alors on a un diagramme commutatif:
$$\xymatrix{H^2(X,\mu_{\infty})\ar[r]^-{p_2^*-\rho^*}\ar[d]^{\rho_x^*}&H^2(G\times X,\mu_{\infty})\ar[d]^{(id_G\times \rho_x)^*}&H^2(G,\mu_{\infty})\ar[d]^=\ar@{_(->}[l]_-{p_1^*}\\
H^2(G,\mu_{\infty})\ar[r]_-{p_{2,G}^*-m^*}&H^2(G\times G,\mu_{\infty})&H^2(G,\mu_{\infty})\ar@{_(->}[l]^-{p_{1,G}^*}.
}$$
D'apr\`es (1), $\Im(p_{2,G}^*-m^*)\cap \Im(p_{1,G}^* )=0$ et donc $\Im(p_2^*-\rho^*)\cap \Im(p_1^* )=0$, d'o\`u (3).

Pour (4), pour tout $\alpha\in H^2_G(X,\mu_{\infty})$, on obtient
 $ (\rho^*-p_2^*)(\alpha)|_{G\times x}=(\rho_x^*-i_x^*)(\alpha)$.
\end{proof}

La suite spectrale $E_2^{i,j}:=H^i(k,H^j(X_{\bk},\mu_{\infty}))\Rightarrow H^{i+j}(X,\mu_{\infty})$ induit une suite exacte
\begin{equation}\label{prop21.1e5}
H^2(k,\mu_{\infty})\to H^2_1(X,\mu_{\bk})\xrightarrow{\partial} H^1(k,H^1(X_{\bk},\mu_{\infty})) \to \Ker(H^3(k,\mu_{\infty})\to H^3(X,\mu_{\infty})). 
\end{equation}
Par les d\'efinitions de $\lambda$ ci-dessus et de $\lambda_{H^1}$  dans (\ref{prop21.1e3}), on a un diagramme commutatif:

\begin{equation}\label{prop21.1e4}
\xymatrix{H^2_1(X,\mu_{\infty})\ar@{^{(}->}[r]\ar[d]^{\partial}&H^2_G(X,\mu_{\infty})\ar[r]^-{\lambda}&H^2_{1,e}(G,\mu_{\infty})\ar[d]^=\\
H^1(k,H^1(X_{\bk},\mu_{\infty}))\ar[r]^-{\lambda_{H^1,*}}&H^1(k,H^1(G_{\bk},\mu_{\infty}))&H^2_{1,e}(G,\mu_{\infty})\ar[l]_-{\cong}.
}
\end{equation}

\bigskip

\begin{prop}\label{prop21.2}
Soient $G$ un groupe alg\'ebrique connexe et $(X,\rho)$ une $G$-vari\'et\'e lisse g\'eom\'etriquement connexe.
Alors  

(1)  on a: $\Br_{2/3}(X)\sbt \Br'_G(X)\sbt \Br_G(X)$;

(2) si $X$ est un $G$-torseur, on a  $\Br_{2/3}(X)=\Br'_G(X)$;

(3) il existe un homomorphisme unique (\emph{l'homomorphisme de Sansuc}) $$\lambda_{\Br}: \Br'_G(X)\to \Br_{2/3,e}(G)$$ 
tel que $(\rho^*-p_2^*)=p_1^*\circ \lambda_{\Br}$;

(4) pour tout $x\in X(k)$, on a $\lambda_{\Br} =\rho_x^*-i_x^*: \Br'_G(X)\to \Br_{2/3}(G)$,
o\`u $i_x:G\to X: g\mapsto x$ et $\rho_x: G\to X: g\mapsto g\cdot x$.
\end{prop}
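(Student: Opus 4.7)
The plan is to follow the structure of Proposition \ref{prop21.1} line by line, transporting each statement from $\mu_\infty$-cohomology to Brauer groups via the surjections $\psi_{*,2}: H^2_1(X,\mu_\infty)\twoheadrightarrow \Br_{2/3}(X)$ supplied by Proposition \ref{lem21.2} and $H^2_{1,e}(G,\mu_\infty)\twoheadrightarrow \Br_{2/3,e}(G)$ supplied by Corollary \ref{prop21.2lem}.

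Parts (1) and (2) should fall out directly. For the inclusion $\Br_{2/3}(X)\sbt \Br'_G(X)$ in (1), I would combine Proposition \ref{prop21.1}(1), which gives $H^2_1(X,\mu_\infty)\sbt H^2_G(X,\mu_\infty)$, with the description $\Br_{2/3}(X)=\Im(\psi_{*,2})$ of Proposition \ref{lem21.2}. The inclusion $\Br'_G(X)\sbt \Br_G(X)$ is formal: if $a$ is the image of some $b\in H^2_G(X,\mu_\infty)$, then the relation $(\rho^*-p_2^*)(b)\in p_1^*H^2(G,\mu_\infty)$ pushes forward along $H^2(-,\mu_\infty)\to \Br$ to $(\rho^*-p_2^*)(a)\in p_1^*\Br(G)$. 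Part (2) uses the same pattern together with Proposition \ref{prop21.1}(2), which gives $H^2_1=H^2_G$ on a torsor.

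For part (3), the natural candidate for $\lambda_{\Br}$ is the composition $H^2_G(X,\mu_\infty)\xrightarrow{\lambda} H^2_{1,e}(G,\mu_\infty)\twoheadrightarrow \Br_{2/3,e}(G)$. I need to show it factors through $\Br'_G(X)$, i.e.\ vanishes on the kernel of $H^2_G(X,\mu_\infty)\to \Br(X)$. Applying the Kummer map to the defining identity $p_1^*\lambda(b)=(\rho^*-p_2^*)(b)$ in $H^2(G\times X,\mu_\infty)$ shows that, whenever $b$ dies in $\Br(X)$, the image of $\lambda(b)$ in $\Br_{2/3,e}(G)$ is killed by $p_1^*:\Br_{2/3,e}(G)\to \Br(G\times X)$. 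So existence, uniqueness and the compatibility $p_1^*\circ\lambda_{\Br}=\rho^*-p_2^*$ all reduce to injectivity of this $p_1^*$.

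The main obstacle is therefore this injectivity. If $X(k)\neq \emptyset$, it is immediate: for $x\in X(k)$ the morphism $i_x:G\to G\times X$, $g\mapsto(g,x)$, is a section of $p_1$, so $p_1^*:\Br(G)\to \Br(G\times X)$ is split injective. In the absence of a $k$-point, I would base change to $K:=k(X)$; since $X$ is geometrically integral, $k$ is algebraically closed in $K$ and $X_K$ admits a canonical $K$-point (the generic section), so the previous case gives injectivity of $p_{1,K}^*:\Br_{2/3,e}(G_K)\to \Br(G_K\times X_K)$. I would then descend using the splitting $\Br_{2/3}(G)=\Im\Br(k)\oplus \Br_{2/3,e}(G)$ coming from $e_G$, which identifies $\Br_{2/3,e}(G)$ with $\Br_{2/3}(G)/\Im\Br(k)$, so that Corollary \ref{lem21.2cor1} yields injectivity of $\Br_{2/3,e}(G)\hookrightarrow \Br_{2/3,e}(G_K)$. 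Part (4) is then obtained by transporting Proposition \ref{prop21.1}(4) through these surjections: for $x\in X(k)$ and $a=\psi_{*,2}(b)\in \Br'_G(X)$ one gets $\lambda_{\Br}(a)=\rho_x^*(a)-i_x^*(a)$, the identity being independent of the choice of lift $b$ by the uniqueness just established.
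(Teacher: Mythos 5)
Your proposal is correct and follows essentially the same route as the paper: reduce everything to part (3) via Propositions \ref{prop21.1} and \ref{lem21.2}, then reduce (3) to the injectivity of $p_1^*$ on $\Br_{2/3,e}(G)$, which is obtained by passing to $K=k(X)$ and invoking Corollary \ref{lem21.2cor1}. The only cosmetic difference is that you argue through the generic section of $X_K$ and $p_{1,K}^*$, whereas the paper restricts $G\times X$ directly to $G_K$.
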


\begin{proof}
D'apr\`es  la proposition \ref{lem21.2}, $\Br_{2/3}(X)=\Im (H^2_1(X,\mu_{\infty})\to \Br(X))$.
Alors la proposition \ref{prop21.1} (1) et (2) impliquent les (1) et (2) ici.
Pour (3), d'apr\`es la proposition \ref{prop21.1} (3), on a $(\rho^*-p_2^*)(\Br'_G(X)) \sbt p_1^*(\Br_{2/3,e}(G))$.
Donc il suffit de montrer que $p_1^*|_{\Br_{2/3,e}(G)}$ est injectif, 
i.e. $p^*_1: \Br_{2/3,e}(G)\to \Br_{2/3,e}(G_K)$ est injectif, o\`u $K:=k(X)$ et $p: G_K\to G$.
Ceci d\'ecoule du corollaire \ref{lem21.2cor1}.
Ensuite, le (3) et la proposition \ref{prop21.1} (4) implique le (4).
\end{proof}

 Pour toute extension de corps $K/k$, et tous $x\in X(K)$, $g\in G(K)$, $\alpha\in \Br'_G(X)$, d'apr\`es la proposition \ref{prop21.2} (3), on a:
\begin{equation}\label{invequa2.3e1}
(g\cdot x)^*(\alpha )= g^*(\lambda_{\Br} (\alpha))+x^*(\alpha)\in \Br(K).
\end{equation}
Alors, dans le cas o\`u $k$ est un corps de nombres, on a:
\begin{equation}\label{invequa2.3}
G(\RA_k)^{\Br_{2/3}(G)}\cdot X(\RA_k)^{\Br'_G(X)}=X(\RA_k)^{\Br'_G(X)}.
\end{equation}

\medskip

Dans le cas o\`u $G$ est lin\'eaire, on a:

\begin{cor}\label{corlinBr1et2/3}
Sous les hypoth\`eses de la proposition \ref{prop21.2}, supposons que $G$ est lin\'eaire.
Alors $\Br_{2/3}(G)= \Br_1(G)\cong H^2_1(G,\mu_{\infty})$ et $\Br_G(X)=\Br'_G(X)$.
\end{cor}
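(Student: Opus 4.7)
The corollary contains two assertions; my plan is to treat them separately.

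For the first claim, $\Br_{2/3}(G)=\Br_1(G)\cong H^2_1(G,\mu_\infty)$, the crucial input I would invoke is the classical theorem of Rosenlicht--Popov that $\Pic(G_{\bk})$ is finite for any connected linear algebraic group in characteristic zero; in particular $\Pic(G_{\bk})_{free}=0$. Granting this, definition \ref{DefiBr2/3} gives $\Br_{2/3}(G)=\ker(\phi_*\colon \Br_1(G)\to H^1(k,\Pic(G_{\bk})_{free}))=\Br_1(G)$ since the target is zero. Proposition \ref{lem21.2} applied to $G$ then has vanishing middle term $(\Pic(G_{\bk})_{free,ndiv})^{\Gamma_k}$, so $\psi_{*,2}$ yields the isomorphism $H^2_1(G,\mu_\infty)\cong \Br_{2/3}(G)$.

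For the second claim, $\Br_G(X)=\Br'_G(X)$, the inclusion $\Br'_G(X)\subseteq \Br_G(X)$ is by definition. For the reverse, given $a\in \Br_G(X)$ with $(\rho^*-p_2^*)(a)=p_1^*(b)$ for some $b\in \Br(G)$, I would lift $a$ to $\tilde a\in H^2(X,\mu_\infty)$ and $b$ to $\tilde b\in H^2(G,\mu_\infty)$ along the Kummer surjections $H^2(-,\mu_\infty)\twoheadrightarrow \Br(-)$ (whose kernels are $\Pic(-)\otimes \BQ/\BZ$). The discrepancy $c:=(\rho^*-p_2^*)(\tilde a)-p_1^*(\tilde b)$ then lies in $\Pic(G\times X)\otimes \BQ/\BZ$. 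The plan is to adjust $\tilde a$ by some $\delta\in \Pic(X)\otimes \BQ/\BZ$ (such a change does not alter $a$) so that $c$ is absorbed into $p_1^*H^2(G,\mu_\infty)$, producing a lift $\tilde a+\delta\in H^2_G(X,\mu_\infty)$ of $a$ and hence showing $a\in \Br'_G(X)$.

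The heart of the argument, where the linearity of $G$ enters decisively, is a Sansuc-type decomposition. Since $\Pic(G_{\bk})$ is finite, $\Pic(G_{\bk})\otimes \BQ/\BZ=0$; combining this with the classical Sansuc exact sequence relating $\Pic(G\times_k X)$, $\Pic(G)$, $\Pic(X)$, and the character group $G^*$, tensored with $\BQ/\BZ$, one should obtain that modulo $p_1^*(\Pic(G)\otimes \BQ/\BZ)$ the group $\Pic(G\times X)\otimes \BQ/\BZ$ is generated by the image of $(\rho^*-p_2^*)\colon \Pic(X)\otimes \BQ/\BZ\to \Pic(G\times X)\otimes \BQ/\BZ$. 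This will supply the required $\delta$. The main technical obstacle is precisely this Sansuc-type surjectivity in $\BQ/\BZ$-tensored form, which I expect to verify by adapting the $\Pic$-level computation in \cite[Prop.~3.2(4)]{C1}.
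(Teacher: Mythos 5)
Your treatment of the first assertion is correct and is the same as the paper's: $\Pic(G_{\bk})$ finite gives $\Pic(G_{\bk})_{free}=0$, hence $\Br_{2/3}(G)=\Br_1(G)$ directly from the d\'efinition \ref{DefiBr2/3}, and the exact sequence of la proposition \ref{lem21.2} (equivalently du corollaire \ref{prop21.2lem}) then collapses to $H^2_1(G,\mu_{\infty})\cong\Br_{2/3}(G)$.

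The second assertion is where there is a genuine gap: the ``Sansuc-type surjectivity'' you postpone to the end is false, and the adjustment mechanism built on it is vacuous. By \cite[Lem. 6.6]{S} and the finiteness of $\Pic(G)$ one has $\Pic(G\times X)\otimes\BQ/\BZ = p_2^*(\Pic(X)\otimes\BQ/\BZ)$, on which $i_e^*$ (restriction along $x\mapsto(e_G,x)$) is injective, indeed inverse to $p_2^*$. Since $\rho\circ i_e=p_2\circ i_e$, the map $(\rho^*-p_2^*)\colon \Pic(X)\otimes\BQ/\BZ\to\Pic(G\times X)\otimes\BQ/\BZ$ lands in $\Ker(i_e^*)=0$, i.e.\ it is identically zero, while $p_1^*(\Pic(G)\otimes\BQ/\BZ)=0$. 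So the generation statement you hope to prove would force $\Pic(G\times X)\otimes\BQ/\BZ=0$, which fails whenever $\Pic(X)\otimes\BQ/\BZ\neq0$; and, worse, replacing $\tilde a$ by $\tilde a+\delta$ with $\delta\in\Pic(X)\otimes\BQ/\BZ$ does not change $(\rho^*-p_2^*)(\tilde a)$ at all, so no choice of $\delta$ can absorb anything. The step can be repaired, but by a different observation: the discrepancy $c=(\rho^*-p_2^*)(\tilde a)-p_1^*(\tilde b)$ satisfies $i_e^*(c)=-\pi_X^*e_G^*(\tilde b)$, a constant class, so the injectivity of $i_e^*$ on $\Pic(G\times X)\otimes\BQ/\BZ$ gives $c=p_2^*i_e^*(c)=-p_1^*\pi_G^*e_G^*(\tilde b)\in p_1^*H^2(G,\mu_{\infty})$ already, with no adjustment needed. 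The paper itself argues in the same circle of ideas but via the induced isomorphism $l_0\colon\Ker(i_{e,H^2}^*)\iso\Ker(i_{e,\Br}^*)$ together with the nontrivial input \cite[Prop. 3.7]{C1} that $(\rho^*-p_2^*)(a)$ lies in $p_1^*\Br_1(G)$ and not merely in $p_1^*\Br(G)$; your sketch secures neither this input nor a substitute for it.
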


\begin{proof}
Si $G$ est lin\'eaire,  on a $\Pic(G_{\bk})_{free}=0$ et $\Pic(G)$ est fini. Par d\'efinition, on a $\Br_{2/3}(G) = \Br_1(G).$
D'apr\`es le corollaire \ref{prop21.2lem}, on a  $\Br_{2/3}(G)\cong H^2_1(G,\mu_{\infty})$.

D'apr\`es \cite[Lem. 6.6]{S}, $\Pic(G\times X)\cong \Pic(G)\oplus \Pic(X)$ et donc $\Pic(G\times X)\otimes \BQ/\BZ\cong  \Pic(X)\otimes \BQ/\BZ$.
Notons $i_e: X\to G\times X: x\mapsto (e_G,x)$. La suite exacte de Kummer donne un diagramme commutatif de suites exactes
$$ \xymatrix{&&H^2(X,\mu_{\infty})\ar[r]^{l_X}\ar[d]^{\phi_{H^2}}&\Br(X)\ar[d]^{\phi_{\Br}}\ar[r]&0\\
0\ar[r]&\Pic(G\times X)\otimes \BQ/\BZ\ar[r]\ar[d]^{\cong}&H^2(G\times X,\mu_{\infty})\ar[r]^{l_{G\times X}}\ar[d]^{i_{e,H^2}^*}&\Br(G\times X)\ar[r]\ar[d]^{i^*_{e,\Br}}&0\\
0\ar[r]&\Pic(X)\otimes \BQ/\BZ\ar[r]&H^2(X,\mu_{\infty})\ar[r]&\Br(X)\ar[r]&0,
}$$
o\`u $\phi_{H^2}:=(\rho^*-p_2^*)|_{H^2(X,\mu_{\infty})}$ et $\phi_{\Br}:=(\rho^*-p_2^*)|_{\Br(X)}$.
Alors $l_{G\times X}$ induit un isomorphisme $l_0: \Ker(i^*_{e,H^2})\iso \Ker(i^*_{e,\Br})$.
Puisque $\Br_1(G)\cong H^2_1(G,\mu_{\infty})$, on a  
$$p_1^*|_{H^2}(H^2_1(G,\mu_{\infty}))=l_0^{-1} (p_1^*|_{\Br}(\Br_1(G))).$$

 Pour tout $b\in H^2(X,\mu_{\infty})$ avec $a:=l_X(b)\in \Br_G(X)$, 
 on a $\phi_{\Br}(a)\in p_1^*|_{\Br}(\Br_1(G))$ (\cite[Prop. 3.7]{C1}),  $i^*_{e,H^2}(\phi_{H^2}(b))=0$ (puisque $\rho\circ i_e=p_2\circ i_e$),
 et donc $\phi_{H^2}(b)\in p_1^*|_{H^2}(H^2_1(G,\mu_{\infty}))$. 
 Alors $b\in H^2_G(X,\mu_{\infty})$ et $\Br_G(X)\sbt \Br'_G(X)$, d'o\`u le r\'esultat.
\end{proof}

\section{Suites exactes de Sansuc}\label{section4}

Dans toute cette section,  $k$ est un corps quelconque de caract\'eristique $0$. 
Sauf  mention explicite du contraire,  une vari\'et\'e est une $k$-vari\'et\'e.

Le but de cette section est d'\'etablir des suites exactes fondamentales de $H^2_1$, $H^2_G$ et de $\Br'_G$.
Le th\'eor\`eme \ref{sansucthm} est une variante de la suite exacte de Sansuc \cite[Prop. 6.10]{S} en rempla\c{c}ant la cohomologie de $\BG_m$ par la cohomologie de $\mu_{\infty}$, dans le cas o\`u $G$ n'est pas n\'ecessairement lin\'eaire. 
Ensuite, en utilisant ce th\'eor\`eme, on montre que les sous-groupes invariants $H^2_G(X,\mu_{\infty})$ et $\Br'_G(X)$
 sont stables si l'on remplace $X$ par son torseur sous un groupe connexe (proposition \ref{sansucthm-cor1}) 
 ou si l'on remplace $G$ par son extension avec un groupe connexe (proposition \ref{sansucthm-prop}).

La suite exacte longue dans le th\'eor\`eme \ref{sansucthm} suivant est appel\'ee \emph{la suite exacte de Sansuc} de $\mu_{\infty}$.

\begin{thm}\label{sansucthm}
Soient $Z$ une $k$-vari\'et\'e lisse g\'eom\'e\-triquement int\`egre, $G$ un $k$-groupe alg\'ebrique connexe et $f: X\to Z$ un $G$-torseur.
Alors on a une suite exacte, fonctorielle en $(X,Z,f,G)$:
$$0\to H^1(Z,\mu_{\infty})\xrightarrow{f^*} H^1(X,\mu_{\infty})\xrightarrow{\lambda_{H^1}} H^1_e(G,\mu_{\infty})\to H^2(Z,\mu_{\infty})\xrightarrow{f^*} H^2(X,\mu_{\infty})\xrightarrow{\rho^*-p_2^*} H^2(G\times X,\mu_{\infty}),$$
o\`u $\rho: G\times X\to X $ est l'action et $\lambda_{H^1}$ est d\'efini dans (\ref{prop21.1e3}).
\end{thm}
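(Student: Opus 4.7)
The approach mirrors Sansuc's original proof of the analogous exact sequence for $\BG_m$-coefficients in the linear setting (cf.\ \cite[Prop.~6.10]{S}): I apply the Leray spectral sequence
\begin{equation*}
E_2^{p,q} = H^p(Z, R^q f_* \mu_\infty) \Rightarrow H^{p+q}(X, \mu_\infty)
\end{equation*}
and extract the claimed sequence from its low-degree terms, using the Cartesian square
\begin{equation*}
\xymatrix{G \times X \ar[r]^-{\rho} \ar[d]_-{p_2} & X \ar[d]^-{f} \\ X \ar[r]_-{f} & Z}
\end{equation*}
coming from the torsor identification $X \times_Z X \iso G \times X$, $(x,y) \mapsto (g,y)$ where $x = g \cdot y$.

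The low direct images are computed as follows. Since $G$ is smooth and connected, $f$ is smooth and surjective with geometrically connected fibers, so $\mu_\infty \iso f_* \mu_\infty$. Smooth base change along $f$ (valid since $\mu_\infty$ is torsion) gives $f^* R^q f_* \mu_\infty \cong R^q p_{2,*} \mu_\infty$, and the latter is a constant sheaf on $X$ with value $H^q(G_{\bk}, \mu_\infty)$, using the section $e_G$ of the trivial torsor $p_2$ to split its Leray spectral sequence. Therefore $R^1 f_* \mu_\infty$ is étale-locally constant with stalk $H^1(G_{\bk}, \mu_\infty)$. The monodromy factors through left translations by $G(\bk)$ on $G_{\bk}$; applying the K\"unneth-type decomposition (\ref{prop21.1e2}) to $m: G \times G \to G$ yields the primitivity formula $m^* \alpha = p_1^* \alpha + p_2^* \alpha$ for $\alpha \in H^1(G_{\bk}, \mu_\infty)$ (using $H^1(\bk,\mu_\infty)=0$), from which one deduces $R_g^* \alpha = \alpha$ for all $g \in G(\bk)$. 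Hence $R^1 f_* \mu_\infty$ is the constant \'etale sheaf on $Z$ attached to the Galois module $H^1(G_{\bk}, \mu_\infty)$, and Corollary \ref{prop21.2lem} identifies $H^0(Z, R^1 f_* \mu_\infty)$ with $H^1_e(G, \mu_\infty)$.

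The Leray five-term exact sequence then reads
\begin{equation*}
0 \to H^1(Z, \mu_\infty) \xrightarrow{f^*} H^1(X, \mu_\infty) \xrightarrow{d} H^1_e(G, \mu_\infty) \to H^2(Z, \mu_\infty) \xrightarrow{f^*} \Ker\bigl(H^2(X, \mu_\infty) \to H^0(Z, R^2 f_* \mu_\infty)\bigr) ,
\end{equation*}
and a diagram chase using the Cartesian square shows $p_1^* \circ d = \rho^* - p_2^*$ on $H^1$, so the uniqueness in (\ref{prop21.1e2})--(\ref{prop21.1e3}) identifies $d$ with $\lambda_{H^1}$. To extend the sequence by the last arrow $\rho^* - p_2^*: H^2(X, \mu_\infty) \to H^2(G \times X, \mu_\infty)$, I show that this map factors as the Leray edge map $H^2(X, \mu_\infty) \to H^0(Z, R^2 f_* \mu_\infty)$ followed by the embedding $H^0(Z, R^2 f_* \mu_\infty) \hookrightarrow H^2(G \times X, \mu_\infty)$ given by $p_1^*$ after the smooth base-change identification from Step~1 applied to $p_2$. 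The desired exactness at $H^2(X, \mu_\infty)$ is then immediate from the five-term sequence. Functoriality in $(X,Z,f,G)$ is inherited from functoriality of the Leray spectral sequence.

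The hard part is the last identification: carefully tracking the compatibility between the Leray edge map on $H^2$ and the explicit formula $\rho^* - p_2^*$ under the smooth base change, and controlling the inclusion $H^0(Z, R^2 f_* \mu_\infty) \hookrightarrow H^2(G\times X,\mu_\infty)$ via $p_1^*$ and the $e_G$-splitting. The triviality of monodromy in Step~1 — obtained from the primitive structure on $H^1$ of a connected algebraic group — is the main novelty beyond Sansuc's linear case and is what allows the argument to survive when $G$ is not linear.
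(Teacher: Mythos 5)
Your route is genuinely different from the paper's: you run the Leray spectral sequence of $f$, namely $H^p(Z,R^qf_*\mu_\infty)\Rightarrow H^{p+q}(X,\mu_\infty)$, which converges to the cohomology of the \emph{total space}, whereas the paper uses the \v{C}ech (descent) spectral sequence of the covering $X\to Z$, namely $\check H^i(X/Z,\CH^j)\Rightarrow H^{i+j}(Z,\mu_\infty)$, which converges to the cohomology of the \emph{base}. The first four terms of your sequence do come out of the Leray five-term sequence essentially as you describe (the identification of $R^1f_*\mu_\infty$ as the constant sheaf attached to $H^1(G_{\bk},\mu_\infty)$ via primitivity of $H^1$ of a connected group is fine). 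But there is a genuine gap at the last, and most important, exactness claim, the one at $H^2(X,\mu_\infty)$.

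You assert that $\rho^*-p_2^*$ factors through the edge map $H^2(X,\mu_\infty)\to E_2^{0,2}=H^0(Z,R^2f_*\mu_\infty)$ and that exactness at $H^2(X,\mu_\infty)$ is then ``immediate from the five-term sequence''. Both halves fail. First, the factorization is false: for the trivial torseur $X=G\times Z$ a decomposable class $\alpha=p_G^*b\cup p_Z^*a$ with $a\in H^1(Z,\cdot)$ and $b\in H^1(G,\cdot)$ primitive lies in $F^1H^2(X)$, hence dies under the edge map, yet $(\rho^*-p_2^*)(\alpha)=p_1^*b\cup p_2^*p_Z^*a$ is nonzero in general. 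Second, even granting the factorization, you would only obtain $\ker(\rho^*-p_2^*)=F^1H^2(X)$, while the Leray sequence identifies $\Im(f^*)$ with $F^2H^2(X)$; the discrepancy is $E_\infty^{1,1}\sbt E_2^{1,1}=H^1(Z,R^1f_*\mu_\infty)$, which does not vanish in general (again the trivial torseur over a base with nontrivial $H^1$ shows this). To close the gap along your route you must in addition prove that $\rho^*-p_2^*$ is injective on $E_\infty^{1,1}$, i.e.\ compute its effect on $H^1(Z,R^1f_*\mu_\infty)$ using primitivity on the fibres, and then assemble the two graded pieces of $H^2(X)/F^2$. The paper sidesteps this entirely: in its \v{C}ech spectral sequence the analogous obstruction groups $\check H^i(X/Z,\CH^0)$ for $i\ge 1$ and $\check H^2(X/Z,\CH^1)$ are shown to vanish, so that $H^2(Z,\mu_\infty)$ surjects directly onto $\check H^0(X/Z,\CH^2)=\Ker(\rho^*-p_2^*)$.
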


\begin{proof}
On suit l'id\'ee de Sansuc \cite[Prop. 6.10]{S}: on utilise la suite spectrale qui lie la cohomologie \'etale avec la cohomologie de C\v{e}ch (la suite spectrale (\ref{sansucthm-e1}) ci-dessous). 
Alors on fait tout d'abord des calculations sur la cohomologie de C\v{e}ch.

\medskip

{\bf \'Etape 1, pr\'efaisceaux des cohomologies \'etales.} 

Soient 
$\CH^i: U\to H^i(U,\mu_{\infty})$, $\CH^1_0: U\to H^1(\pi_0(U),\mu_{\infty})$ et $\CH^1_a: U\to \Coker(H^1(\pi_0(U),\mu_{\infty})\to H^1(U,\mu_{\infty}))$ des pr\'efaisceaux, 
o\`u $\pi_0(U)$ est le sch\'ema des composantes connexes g\'eom\'e\-triques de $U$. 
Le $\CH^i$ ici est exactement le $\underline{H}^i$ dans \cite[III]{Mi80} (cf. \cite[III, Rem. 1.6 (e)]{Mi80}).
Alors on a une suite exacte de pr\'efaisceaux: 
\begin{equation}\label{sansucthm-e2}
0\to \CH^1_0\to\CH^1\to\CH^1_a\to 0.
\end{equation}
Le rev\^etement fid\`element plat de pr\'esentation finie $f: X\to Z$ induit une suite exacte longue des cohomologies de C\v{e}ch:
\begin{equation}\label{sansucthm-e4}
\cdots \to \check{H}^i(X/Z, \CH^1_0)\to  \check{H}^i(X/Z, \CH^1)\to  \check{H}^i(X/Z, \CH^1_a)\to  \check{H}^{i+1}(X/Z, \CH^1_0)\to \cdots .
\end{equation}

\medskip

{\bf \'Etape 2, calculation de la cohomologie de C\v{e}ch.}

Notons $G^1:=G,$ $ X^1:=X,$ $G^i:=G\times G^{i-1}$ et  $X^i:=X\times_Z X^{i-1}$  pour $i\geq 1$. 
Notons $p^i_j: X^{i+1}\to X^i$ la projection qui enl\`eve la j-i\`eme coordonn\'ee, o\`u $1\leq j\leq i+1$.
Ainsi on a directement $X^i\cong G^{i-1}\times X$. 
Alors la cohomologie de C\v{e}ch d'un pr\'efaisceau $\CF$ est la cohomologie du complexe:
$$ 0\to \CF(X^1)\xrightarrow{p_1^{1,*}-p_2^{1,*}}   \CF(X^2) \xrightarrow{p_1^{2,*}-p_2^{2,*}+p_3^{2,*} }\CF(X^3) \to \cdots .$$
Par exemple, $\check{H}^0(X/Z,\CF)=\ker(p_1^{1,*}-p_2^{1,*})$.

(i) On a $\check{H}^0(X/Z,\CH^0)=\mu_{\infty}(k)$ et $\check{H}^i(X/Z,\CH^0)=0$ pour tout $i\geq 1$. 
Ceci vaut car $\CH^0(X^i)\cong \mu_{\infty}(k)$ pour $i$ et tout $p^{i,*}_j$ est l'identit\'e pour tout $i,j$.

(ii) On a $\check{H}^0(X/Z,\CH^1_0)=H^1(k,\mu_{\infty})$ et $\check{H}^i(X/Z,\CH^1_0)=0$ pour tout $i\geq 1$. Ceci vaut car $\pi_0(X^i)=\Spec\ k$, $\CH^1_0(X^i)\cong H^1(k,\mu_{\infty})$ pour tout $i$ et $p^{i,*}_j$ est l'identit\'e pour tout $i,j$.

(iii) On a $\check{H}^i(X/Z,\CH^1_a)=0$ pour tout $i\geq 2$ et une suite exacte naturelle: 
$$0\to \check{H}^0(X/Z,\CH^1_a)\to \CH^1_a(X)\xrightarrow{\rho^*-p_2^*} \CH^1_a(G)\to \check{H}^1(X/Z,\CH^1_a)\to 0.$$
En fait, d'apr\`es \cite[Lem. 6.12]{S} et le fait $\CH^1_a(k)=0$, il suffit de montrer que 
$$\CH^1_a(X\times G^i)\cong  \CH^1_a(X)\oplus \CH^1_a(G)^{\oplus i}.$$
Ceci est \'etabli dans (\ref{prop21.1e2}).

(iv) On a $\check{H}^i(X/Z,\CH^1)=0$ pour tout $i\geq 2$ et une suite exacte naturelle: 
\begin{equation}\label{sansucthm-e3}
0\to \check{H}^0(X/Z,\CH^1)\to \CH^1(X)\xrightarrow{\rho^*-p_2^*} \CH^1_a(G)\to \check{H}^1(X/Z,\CH^1)\to 0.
\end{equation}
D'apr\`es (ii) et les suites exactes (\ref{sansucthm-e2}) et (\ref{sansucthm-e4}), on a $\check{H}^i(X/Z,\CH^1)\cong \check{H}^i(X/Z,\CH^1_a)$ pour tout $i\geq 1$
 et un diagramme commutatif de suites exactes:
$$\xymatrix{0\ar[r]&\check{H}^0(X/Z,\CH^1_0)\ar[r]\ar[d]^{\cong}&\check{H}^0(X/Z,\CH^1)\ar[r]\ar@{_(->}[d] &\check{H}^0(X/Z,\CH^1_a)\ar[r]\ar@{_(->}[d]^{\phi}&0\\
0\ar[r]&\CH^1_0(X)\ar[r] &\CH^1(X)\ar[r]&\CH^1_a(X)\ar[r]&0.
}$$
D'apr\`es le  lemme du serpent, on a une suite exacte:
$$ 0\to \check{H}^0(X/Z,\CH^1)\to \CH^1(X)\to \coker(\phi)\to 0 .$$
L'\'enonc\'e d\'ecoule de (iii).

\medskip

{\bf \'Etape 3, la suite spectrale.} 

Puisque $H^i(X,\mu_{\infty})\cong H^i_{fppf}(X,\mu_{\infty})$ (\cite[III, Thm. 3.9]{Mi80} pour $\mu_n$) et $f: X\to Z$ est un rev\^etement fid\`element plat de pr\'esentation finie, on a la suite spectrale (\cite[III, Prop. 2.7]{Mi80})
\begin{equation}\label{sansucthm-e1}
E_2^{i,j}=\check{H}^i(X/Z,\CH^j)\Rightarrow H^{i+j}(Z,\mu_{\infty}).
\end{equation}

Dans la suite spectrale (\ref{sansucthm-e1}), d'apr\`es (i) et (iv), $E_2^{i,0}=0$ pour tout $i\geq 1$ et $E^{i,1}_2=0$ pour tout $i\geq 2$.
Alors cette suite spectrale induit un isomorphisme $ \check{H}^0(X/Z,\CH^1)\cong H^1(Z,\mu_{\infty})$ et une suite exacte: 
$$0\to \check{H}^1(X/Z,\CH^1)\to  H^2(Z,\mu_{\infty})\to \check{H}^0(X/Z,\CH^2)\to 0.$$
D'apr\`es (\ref{sansucthm-e3}), on a une suite exacte longue:
$$0\to H^1(Z,\mu_{\infty}) \to \CH^1(X)\xrightarrow{\rho^*-p_2^*} \CH^1_a(G)\to  H^2(Z,\mu_{\infty})\to \check{H}^0(X/Z,\CH^2)\to 0.$$
Puisque $\check{H}^0(X/Z,\CH^2)=\Ker (H^2(X,\mu_{\infty})\xrightarrow{\rho^*-p_2^*} H^2(G\times X,\mu_{\infty}))$,
 il suffit de montrer que l'homomorphisme canonique $l: H^1_e(G,\mu_{\infty})\to \CH^1_a(G)$ est un isomorphisme et que $l\circ \lambda_{H_1}=\rho^*-p_2^*$. 
 Ceci d\'ecoule de la d\'efinition et de (\ref{prop21.1e3}).
\end{proof}

\begin{cor}\label{sansucthm-cor}
Sous les hypoth\`eses du th\'eor\`eme \ref{sansucthm}, on a un diagramme commutatif de suites exactes, fonctoriel en $(X,Z,f,G)$:
$$\xymatrix{ H^1_e(G,\mu_{\infty})\ar[d]^=\ar[r]& H^2(Z,\mu_{\infty})\ar[r]^{f^*}\ar[d]^=& H^2_G(X,\mu_{\infty})\ar[r]^{\lambda}\ar@{^{(}->}[d]& H^2_{1,e}(G,\mu_{\infty})\ar@{^{(}->}[d]^{p_1^*}\\
 H^1_e(G,\mu_{\infty})\ar[r]& H^2(Z,\mu_{\infty})\ar[r]^{f^*} & H^2(X,\mu_{\infty})\ar[r]^-{\rho^*-p_2^*}& H^2(G\times X,\mu_{\infty}).}$$
\end{cor}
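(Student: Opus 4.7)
The plan is to deduce this corollary directly from Theorem \ref{sansucthm} and Proposition \ref{prop21.1}, with the injectivity of $p_1^*$ on $H^2_{1,e}(G,\mu_{\infty})$ recorded in (\ref{prop21.1e1}) being the only extra input. Three points need checking: that $f^*$ factors through $H^2_G(X,\mu_{\infty})$, that the right-hand square commutes, and that the top row is exact at its two interior positions. Functoriality in $(X,Z,f,G)$ will then follow automatically, since every object and arrow in the diagram is built canonically from this data and Theorem \ref{sansucthm} is already functorial.

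First I would note that, since $X \to Z$ is a $G$-torseur, the two morphisms $f \circ \rho$ and $f \circ p_2$ from $G \times X$ to $Z$ coincide, hence $(\rho^* - p_2^*) \circ f^* = 0$ on $H^2(Z,\mu_{\infty})$. Therefore $\Im(f^*) \sbt \Ker(\rho^* - p_2^*) \sbt H^2_G(X,\mu_{\infty})$, which defines the top $f^*$ and makes the middle square commute tautologically. The right-hand square commutes by the defining relation $p_1^* \circ \lambda = \rho^* - p_2^*$ furnished by Proposition \ref{prop21.1}(3); the left-hand square is trivial.

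For exactness of the top row, the position at $H^2(Z,\mu_{\infty})$ is identical to that in the bottom row and is thus covered by Theorem \ref{sansucthm}. For the position at $H^2_G(X,\mu_{\infty})$, the injectivity of $p_1^*|_{H^2_{1,e}(G,\mu_{\infty})}$ forces
$$\Ker(\lambda) = \Ker\bigl((\rho^* - p_2^*)|_{H^2_G(X,\mu_{\infty})}\bigr).$$
Since any element of $H^2(X,\mu_{\infty})$ annihilated by $\rho^* - p_2^*$ automatically lies in $H^2_G(X,\mu_{\infty})$, this kernel equals $\Ker(\rho^* - p_2^*: H^2(X,\mu_{\infty}) \to H^2(G\times X,\mu_{\infty}))$, which is $\Im(f^*)$ by the bottom row of Theorem \ref{sansucthm}.

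The argument is therefore a pure diagram chase; all substantive content is already packaged in Theorem \ref{sansucthm} and in the construction of the Sansuc homomorphism $\lambda$, so no genuine obstacle arises. The only subtle point to keep in mind is that one must invoke the injectivity of $p_1^*$ on $H^2_{1,e}(G,\mu_{\infty})$ rather than on the full $H^2(G,\mu_{\infty})$, since the latter is not injective in general.
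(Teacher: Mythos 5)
Your proof is correct and follows the same route as the paper, which simply cites the injectivity of $p_1^*$ on $H^2_{1,e}(G,\mu_{\infty})$ from (\ref{prop21.1e1}) together with le th\'eor\`eme \ref{sansucthm} and la proposition \ref{prop21.1} (3). You have merely spelled out the diagram chase that the paper leaves implicit.
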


\begin{proof}
D'apr\`es (\ref{prop21.1e1}), $p_1^*$ est injective. Le r\'esultat d\'ecoule du th\'eor\`eme \ref{sansucthm} et de la proposition \ref{prop21.1} (3).
\end{proof}

\begin{cor}\label{sansucthm-cor2}
Sous les hypoth\`eses du th\'eor\`eme \ref{sansucthm}, supposons que $H^2(k,H^1(Z_{\bk},\mu_{\infty}))=0$ et $H^2(Z_{\bk},\mu_{\infty})=0$. 
Si $H^3(k,\mu_{\infty})=0$ ou $X(k)\neq\emptyset$, alors les homomorphismes de Sansuc
 $\lambda: H^2_G(X,\mu_{\infty})\to H^2_{1,e}(G,\mu_{\infty})$ et $\lambda_{\Br}: \Br'_G(X)\to \Br_{2/3,e}(G)$ sont surjectifs.
\end{cor}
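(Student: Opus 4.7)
Le plan est d'\'etablir d'abord la surjectivit\'e de $\lambda$ par chasse au diagramme dans (\ref{prop21.1e4}), puis d'en d\'eduire celle de $\lambda_{\Br}$ via la fonctorialit\'e de $\psi_{*,2}$. Plus pr\'ecis\'ement, je montrerai que la restriction de $\lambda$ au sous-groupe $H^2_1(X,\mu_{\infty})\sbt H^2_G(X,\mu_{\infty})$ est d\'ej\`a surjective sur $H^2_{1,e}(G,\mu_{\infty})$. D'apr\`es la commutativit\'e du diagramme (\ref{prop21.1e4}), cette restriction co\"incide avec la composition
$$H^2_1(X,\mu_{\infty})\xrightarrow{\partial}H^1(k,H^1(X_{\bk},\mu_{\infty}))\xrightarrow{\lambda_{H^1,*}} H^1(k,H^1(G_{\bk},\mu_{\infty}))\iso H^2_{1,e}(G,\mu_{\infty}),$$
et il suffira de v\'erifier la surjectivit\'e de $\partial$ et celle de $\lambda_{H^1,*}$.

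Pour traiter $\lambda_{H^1,*}$, j'appliquerai le th\'eor\`eme \ref{sansucthm} au $G_{\bk}$-torseur $f_{\bk}: X_{\bk}\to Z_{\bk}$. Sur $\bk$ on a $H^1_e(G_{\bk},\mu_{\infty})=H^1(G_{\bk},\mu_{\infty})$, et l'hypoth\`ese $H^2(Z_{\bk},\mu_{\infty})=0$ transforme la suite exacte de Sansuc en une suite exacte courte de $\Gamma_k$-modules
$$0\to H^1(Z_{\bk},\mu_{\infty})\to H^1(X_{\bk},\mu_{\infty})\xrightarrow{\lambda_{H^1}} H^1(G_{\bk},\mu_{\infty})\to 0.$$
En prenant $H^1(k,-)$ et en invoquant l'hypoth\`ese $H^2(k,H^1(Z_{\bk},\mu_{\infty}))=0$, on obtiendra la surjectivit\'e de $\lambda_{H^1,*}$. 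Pour $\partial$, la suite exacte (\ref{prop21.1e5}) montre qu'il suffit d'\'etablir $\Ker(H^3(k,\mu_{\infty})\to H^3(X,\mu_{\infty}))=0$: c'est automatique si $H^3(k,\mu_{\infty})=0$, et si $X(k)\neq\emptyset$ tout point $k$-rationnel de $X$ fournit une r\'etraction de $X\to\Spec\ k$, rendant $H^3(k,\mu_{\infty})\to H^3(X,\mu_{\infty})$ injective.

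Pour d\'eduire enfin la surjectivit\'e de $\lambda_{\Br}$ de celle de $\lambda$, j'utiliserai d'une part la surjection $\psi_{*,2}^G: H^2_{1,e}(G,\mu_{\infty})\twoheadrightarrow \Br_{2/3,e}(G)$ fournie par le corollaire \ref{prop21.2lem}, et d'autre part la surjection $H^2_G(X,\mu_{\infty})\twoheadrightarrow \Br'_G(X)$ qui r\'esulte de la d\'efinition m\^eme de $\Br'_G(X)$. La fonctorialit\'e de $\psi_{*,2}$ assure que ces deux fl\`eches s'ins\`erent dans un carr\'e commutatif reliant $\lambda$ et $\lambda_{\Br}$, et une chasse au diagramme imm\'ediate donnera le r\'esultat. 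Aucun ingr\'edient nouveau n'est requis: le point le plus d\'elicat est la commutativit\'e de (\ref{prop21.1e4}), mais elle est d\'ej\`a \'etablie, et toute la d\'emonstration se ram\`ene \`a combiner cette compatibilit\'e avec la suite de Sansuc g\'eom\'etrique.
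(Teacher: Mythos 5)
Your proposal is correct and follows essentially the same route as the paper: the geometric Sansuc sequence for $X_{\bk}\to Z_{\bk}$ together with $H^2(Z_{\bk},\mu_{\infty})=0$ and $H^2(k,H^1(Z_{\bk},\mu_{\infty}))=0$ gives the surjectivity of $\lambda_{H^1,*}$, the sequence (\ref{prop21.1e5}) combined with the hypothesis $H^3(k,\mu_{\infty})=0$ ou $X(k)\neq\emptyset$ gives the surjectivity of $\partial$, and the diagram (\ref{prop21.1e4}) then yields the surjectivity of $\lambda$, d'o\`u celle de $\lambda_{\Br}$. You merely spell out in more detail the final passage from $\lambda$ \`a $\lambda_{\Br}$, which the paper leaves implicit.
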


\begin{proof}
Par hypoth\`eses, le th\'eor\`eme \ref{sansucthm} induit une suite exacte:
$$0\to H^1(Z_{\bk},\mu_{\infty})\to H^1(X_{\bk},\mu_{\infty})\xrightarrow{\lambda_{H^1}} H^1(G_{\bk},\mu_{\infty})\to 0$$
et donc un homomorphisme surjectif $$\lambda_{H^1,*}: H^1(k, H^1(X_{\bk},\mu_{\infty}))\to H^1(k,H^1(G_{\bk},\mu_{\infty})).$$
Puisque $H^3(k,\mu_{\infty})=0$ ou $X(k)\neq\emptyset$, d'apr\`es (\ref{prop21.1e5}), $H^2_1(X,\mu_{\infty})\to H^1(k, H^1(X_{\bk},\mu_{\infty}))$ est surjectif.
D'apr\`es (\ref{prop21.1e4}), $\lambda$ est surjectif et donc $\lambda_{\Br}$ est surjectif.
\end{proof}

\begin{cor}\label{sansucthm-cor3}
Soient $G$ un groupe alg\'ebrique connexe et $(X,\rho)$ un $G$-torseur. 
Si $H^3(k,\mu_{\infty})=0$ ou $X(k)\neq\emptyset$, alors on a des suites exactes:
$$H^2(k,\mu_{\infty})\to H^2_G(X,\mu_{\infty})\xrightarrow{\lambda} H^2_{1,e}(G,\mu_{\infty})\to 0 \ \ \ \text{et}\ \ \ \Br(k)\to \Br'_G(X)\xrightarrow{\lambda_{\Br}}\Br_{2/3,e}(G)\to 0.$$
\end{cor}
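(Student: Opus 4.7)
Puisque $X$ est un $G$-torseur, le morphisme structurel $f: X \to Z := \Spec k$ fournit exactement le cadre de la section \ref{section4}. Comme $\bk$ est alg\'ebriquement clos, $H^i(Z_{\bk}, \mu_{\infty}) = 0$ pour tout $i \geq 1$; les hypoth\`eses cohomologiques du corollaire \ref{sansucthm-cor2} sont donc trivialement v\'erifi\'ees, et l'hypoth\`ese sur $H^3(k,\mu_{\infty})$ ou $X(k)$ est celle de l'\'enonc\'e. On en d\'eduit la surjectivit\'e de $\lambda$ et de $\lambda_{\Br}$.

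Pour la premi\`ere suite exacte, j'appliquerais le corollaire \ref{sansucthm-cor} avec $Z = \Spec k$: la ligne sup\'erieure s'\'ecrit
\[
H^1_e(G,\mu_{\infty}) \to H^2(k,\mu_{\infty}) \xrightarrow{f^*} H^2_G(X,\mu_{\infty}) \xrightarrow{\lambda} H^2_{1,e}(G,\mu_{\infty}),
\]
et l'exactitude en $H^2_G(X,\mu_{\infty})$ r\'esulte de l'injectivit\'e de $p_1^*$ combin\'ee avec l'exactitude en $H^2(X,\mu_{\infty})$ de la suite de Sansuc (th\'eor\`eme \ref{sansucthm}, soit la ligne inf\'erieure du corollaire). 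En combinant avec la surjectivit\'e de $\lambda$, on obtient la premi\`ere suite exacte.

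Pour la seconde, on forme le diagramme commutatif
\[
\xymatrix{
H^2(k,\mu_{\infty}) \ar[r] \ar[d]^{\cong} & H^2_G(X,\mu_{\infty}) \ar[r]^-{\lambda} \ar@{->>}[d] & H^2_{1,e}(G,\mu_{\infty}) \ar[r] \ar@{->>}[d] & 0 \\
\Br(k) \ar[r] & \Br'_G(X) \ar[r]^-{\lambda_{\Br}} & \Br_{2/3,e}(G) \ar[r] & 0
}
\]
o\`u la fl\`eche verticale de gauche est l'isomorphisme de Kummer (car $\Pic(k) = 0$), celle du milieu est surjective par d\'efinition de $\Br'_G(X)$, et celle de droite l'est par la proposition \ref{lem21.2} appliqu\'ee \`a $G$ en $e_G$. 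La commutativit\'e provient de la proposition \ref{prop21.2}(3). La surjectivit\'e de $\lambda_{\Br}$ en r\'esulte imm\'ediatement.

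Le point d\'elicat est l'exactitude en $\Br'_G(X)$. Pour $\alpha \in \Br'_G(X)$ avec $\lambda_{\Br}(\alpha) = 0$, on le rel\`eve en $\tilde\alpha \in H^2_G(X,\mu_{\infty})$; alors $\lambda(\tilde\alpha)$ appartient au noyau de $H^2_{1,e}(G,\mu_{\infty}) \twoheadrightarrow \Br_{2/3,e}(G)$, qui d'apr\`es la proposition \ref{lem21.2} est l'image de $(\Pic(G_{\bk})_{free,ndiv})^{\Gamma_k}$ modulo $\Im \Pic(G)$. Il s'agit ensuite de v\'erifier que $\lambda$ envoie surjectivement le noyau de la fl\`eche verticale centrale sur celui de la fl\`eche verticale droite, ce qui utilise que pour un $G$-torseur on a $X_{\bk} \cong G_{\bk}$ (puisque $H^1(\bk,G) = 0$ pour tout groupe alg\'ebrique connexe sur un corps alg\'ebriquement clos) et donc $\Pic(X_{\bk}) \cong \Pic(G_{\bk})$ canoniquement comme $\Gamma_k$-modules (l'ind\'ependance du point base vient de l'action triviale des translations sur le Picard d'un groupe alg\'ebrique). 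Un argument type lemme du serpent permet alors de corriger $\tilde\alpha$ pour le ramener dans l'image de $H^2(k,\mu_{\infty})$, d'o\`u $\alpha \in \Im(\Br(k) \to \Br'_G(X))$.
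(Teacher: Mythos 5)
La premi\`ere moiti\'e de votre argument (surjectivit\'e de $\lambda$ et $\lambda_{\Br}$ via le corollaire \ref{sansucthm-cor2} avec $Z=\Spec\ k$, puis la premi\`ere suite exacte via le corollaire \ref{sansucthm-cor}) co\"incide avec la d\'emonstration du texte et est correcte. En revanche, votre traitement de l'exactitude en $\Br'_G(X)$ comporte une lacune r\'eelle. Votre chasse au diagramme repose sur l'affirmation que $\lambda$ envoie surjectivement $K_m:=\Ker\bigl(H^2_G(X,\mu_{\infty})\to \Br'_G(X)\bigr)=H^2_G(X,\mu_{\infty})\cap(\Pic(X)\otimes\BQ/\BZ)$ sur $K_r:=\Ker\bigl(H^2_{1,e}(G,\mu_{\infty})\to\Br_{2/3,e}(G)\bigr)$, lequel est, d'apr\`es le corollaire \ref{prop21.2lem}, l'image de $(\Pic(G_{\bk})_{free,ndiv})^{\Gamma_k}$ modulo celle de $\Pic(G)$. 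Or un lemme du serpent appliqu\'e \`a votre diagramme montre que $\coker(K_m\to K_r)$ s'identifie pr\'ecis\'ement au d\'efaut d'exactitude en $\Br'_G(X)$ : votre affirmation cl\'e est donc \emph{\'equivalente} \`a l'\'enonc\'e \`a d\'emontrer, et la justification que vous en donnez --- $X_{\bk}\cong G_{\bk}$ donc $\Pic(X_{\bk})\cong\Pic(G_{\bk})$ --- ne porte que sur les groupes de Picard g\'eom\'etriques, alors que $K_m$ et $K_r$ sont des objets arithm\'etiques ($\Pic(X)$ sur $k$ d'un c\^ot\'e, le conoyau de $\Pic(G)\to(\Pic(G_{\bk})_{free,ndiv})^{\Gamma_k}$ de l'autre). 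Savoir que les Picard g\'eom\'etriques co\"incident ne dit rien sur la possibilit\'e d'atteindre, depuis des classes de $\Pic(X)$, les classes de $(\Pic(G_{\bk})_{free,ndiv})^{\Gamma_k}$ qui ne descendent pas \`a $\Pic(G)$ (l'obstruction \`a cette descente vit dans $\Br(k)$ et ne s'annule pas en g\'en\'eral). L'argument est donc circulaire en substance.

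La d\'emonstration du texte contourne enti\`erement ce point : par la proposition \ref{prop21.2} (2), $\Br'_G(X)=\Br_{2/3}(X)$ pour un torseur, et il suffit de montrer que $\Br_{2/3}(X)/\Im\Br(k)\to\Br_{2/3,e}(G)$ est injectif. Si $X(k)\neq\emptyset$, la proposition \ref{prop21.2} (4) donne $\lambda_{\Br}=\rho_x^*-i_x^*$ avec $\rho_x:G\to X$ un isomorphisme et $i_x^*$ \`a valeurs dans $\Im\Br(k)$, d'o\`u l'injectivit\'e ; dans le cas g\'en\'eral on passe \`a $K=k(X)$, o\`u $X_K(K)\neq\emptyset$, et on conclut par l'injectivit\'e de $\Br_{2/3}(X)/\Im\Br(k)\to\Br_{2/3}(X_K)/\Im\Br(K)$ (corollaire \ref{lem21.2cor1}). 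C'est exactement l'ingr\'edient arithm\'etique qui manque \`a votre r\'edaction ; pour la compl\'eter, il faudrait soit ins\'erer cet argument, soit d\'emontrer directement $\lambda(K_m)=K_r$, ce qui ne semble pas plus simple.
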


\begin{proof}
D'apr\`es le corollaire \ref{sansucthm-cor2}, $\lambda$ et $\lambda_{\Br}$ sont surjectifs. 
Le corollaire \ref{sansucthm-cor} donne la premi\`ere suite exacte. 
D'apr\`es la proposition \ref{prop21.2} (2), $\Br'_G(X)=\Br_{2/3}(X)$.
Donc il suffit de montrer que $ \Br_{2/3}(X)/\Im \Br(k)\to \Br_{2/3,e}(G)$ est injectif.

Si $X(k)\neq\emptyset$, ceci d\'ecoule de la proposition \ref{prop21.2} (4).
En g\'en\'eral, soit $K:=k(X)$.
 Alors $X_K(K)\neq\emptyset$ et donc $\Br_{2/3}(X_K)/\Im \Br(K)\to \Br_{2/3,e}(G_K)$ est injectif.
D'apr\`es le corollaire \ref{lem21.2cor1}, $\Br_{2/3}(X)/\Im \Br(k)\to \Br_{2/3}(X_K)/\Im \Br(K)$ et $\Br_{2/3,e}(G)\to \Br_{2/3,e}(G_K)$ sont injectifs, d'o\`u le r\'esultat.
\end{proof}

\medskip

Les deux propositions suivantes d\'ecrivent les propri\'et\'es de $H^2_G(X,\mu_{\infty})$ et de $\Br'_G(X)$ par rapport au changement de $X$ et de $G$.

\begin{prop}\label{sansucthm-cor1}
Soient $G$, $H$ deux groupes alg\'ebriques connexes et $p: Y\to X$ un $G$-morphisme de $G$-vari\'et\'es. 
Si $p$ est un $H$-torseur, alors 

(1) on a $(p^*)^{-1}H^2_G(Y,\mu_{\infty})=H^2_G(X,\mu_{\infty})$, o\`u $p^*: H^2(X,\mu_{\infty})\to H^2(Y,\mu_{\infty})$ est l'homomorphisme induit par $p$;

(2) si $H$ est lin\'eaire, on a $(p^*_{\Br})^{-1}\Br'_G(Y)=\Br'_G(X)$, o\`u $\Br(X)\xrightarrow{p^*_{\Br}}\Br(Y)$.
\end{prop}

\begin{proof}
Notons $\rho: G\times X\to X$, $\rho_Y:G\times Y\to Y$ deux actions et $p_2:G\times X\to X$, $p_1: G\times X\to G$, $p_{2,Y}: G\times Y\to Y$ les projections.

Puisque $G\times Y\xrightarrow{id_G\times p}G\times X$ est aussi un $H$-torseur, d'apr\`es le th\'eor\`eme \ref{sansucthm}, on a un diagramme commutatif de suites exactes
$$\xymatrix{H^1_e(H,\mu_{\infty})\ar[r]\ar[d]^=&H^2(X,\mu_{\infty})\ar[r]^{p^*}\ar[d]^{\phi^*}&H^2(Y,\mu_{\infty})\ar[d]^{\phi_{Y}^*}\\
H^1_e(H,\mu_{\infty})\ar[r]&H^2(G\times X,\mu_{\infty})\ar[r]^-{(id_G\times p)^*}&H^2(G\times Y,\mu_{\infty}),
}$$
o\`u $\phi=\rho$ (resp. $\phi=p_2$) et $\phi_Y= \rho_Y$ (resp. $\phi_Y=p_{2,Y}$). 
 Donc, pour tout $\alpha\in (p^*)^{-1}H^2_G(Y,\mu_{\infty})$, on a
$$\rho^*(\alpha)-p_2^*(\alpha)\in p_1^*H^2(G,\mu_{\infty})+\Im H^1_e(H,\mu_{\infty})\sbt p_1^*H^2(G,\mu_{\infty})+p_2^*H^2(X,\mu_{\infty}).$$ 
Puisque $(\rho^*(\alpha)-p_2^*(\alpha))|_{e_G\times X}=0 $, on a $\rho^*(\alpha)-p_2^*(\alpha)\in p_1^*H^2(G,\mu_{\infty})$. Ceci donne (1).

Pour (2), par la suite exacte de Sansuc \cite[Prop. 6.10]{S}, on a une suite exacte $$\Pic(X)\to \Pic(Y)\to \Pic(H)$$ avec $\Pic(H)$ fini.
Donc $\Pic(X)\otimes \BQ/\BZ\to \Pic(Y)\otimes \BQ/\BZ$ est surjectif.
Par la suite exacte de Kummer, on a un diagramme commutatif de suites exactes:
$$\xymatrix{0\ar[r]&\Pic(X)\otimes \BQ/\BZ \ar[r]\ar[d]^{p^*_{\Pic}}& H^2(X,\mu_{\infty})\ar[r]\ar[d]^{p^*}& \Br(X)\ar[r]\ar[d]^{p^*_{\Br}}&0\\
0\ar[r]&\Pic(Y)\otimes \BQ/\BZ \ar[r]& H^2(Y,\mu_{\infty})\ar[r]& \Br(Y)\ar[r]&0.
}$$
Puisque $p^*_{\Pic}$ est surjectif, une chasse au diagramme donne (2).
\end{proof}

Dans le cas o\`u $X\cong G/H$ avec $H\subset G$ un sous-groupe connexe, $p: G\to X$ est un $H$-torseur. 
La proposition \ref{sansucthm-cor1} (1) et la proposition \ref{prop21.1} (2) impliquent
 $$H^2_G(X,\mu_{\infty})\subset \ker(H^2(X,\mu_{\infty})\xrightarrow{p^*} H^2(G_{\bk},\mu_{\infty})) $$
et donc
\begin{equation}\label{sansucthm-cor1-e1}
 \Br'_G(X) \subset \Br_1(X,G):=\ker(\Br(X)\xrightarrow{p^*}  \Br(G_{\bk})) .
 \end{equation}

\begin{prop}\label{sansucthm-prop}
Soient $1\to N\to H\xrightarrow{\psi}G\to 1$ une suite exacte de groupes alg\'ebriques connexes, et
$(X,\rho )$ une $G$-vari\'et\'e lisse g\'eom\'e\-triquement int\`egre.
Alors $H^2_G(X,\mu_{\infty})=H^2_H(X,\mu_{\infty})  $ et $\Br'_G(X)=\Br'_H(X)$.
\end{prop}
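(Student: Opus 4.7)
The plan is to exploit the $N$-torsor structure coming from $\psi$ via the Sansuc sequence for $\mu_\infty$ (\ref{sansucthm}). Let $\Psi := \psi \times \mathrm{id}_X : H \times X \to G \times X$; this is an $N$-torsor, namely the pullback of $\psi : H \to G$ along $p_{1,G}$. Writing $\rho_G, \rho_H$ for the two actions and $p_{i,G}, p_{i,H}$ for the projections, the compatibilities $\rho_H = \rho_G \circ \Psi$, $p_{2,H} = p_{2,G} \circ \Psi$ and $\psi \circ p_{1,H} = p_{1,G} \circ \Psi$ hold, from which the inclusion $H^2_G(X, \mu_\infty) \subseteq H^2_H(X, \mu_\infty)$ is formal: if $\rho_G^*(b) - p_{2,G}^*(b) = p_{1,G}^*(c)$, then applying $\Psi^*$ gives $\rho_H^*(b) - p_{2,H}^*(b) = p_{1,H}^*(\psi^*(c))$. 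The equality $\Br'_G(X) = \Br'_H(X)$ will then follow at once by taking images in $\Br(X)$, once the reverse inclusion is proved.

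For the reverse inclusion, I fix $b \in H^2_H(X, \mu_\infty)$ and set $\alpha := \rho_G^*(b) - p_{2,G}^*(b) \in H^2(G \times X, \mu_\infty)$, so that $\Psi^*(\alpha) = p_{1,H}^*(d)$ for some $d \in H^2(H, \mu_\infty)$; the goal is to produce $c \in H^2(G, \mu_\infty)$ with $\alpha = p_{1,G}^*(c)$. Applying \ref{sansucthm} to both $\psi$ and $\Psi$ and using its functoriality yields a commutative ladder with exact rows
\[
\xymatrix@C=1em{
H^2(G,\mu_\infty) \ar[r]^-{\psi^*} \ar[d]_{p_{1,G}^*} & H^2(H, \mu_\infty) \ar[r]^-{m^* - p_2^*} \ar[d]^{p_{1,H}^*} & H^2(N \times H, \mu_\infty) \ar[d]^{q^*} \\
H^2(G \times X, \mu_\infty) \ar[r]^-{\Psi^*} & H^2(H \times X, \mu_\infty) \ar[r] & H^2(N \times H \times X, \mu_\infty)
}
\]
where $m : N \times H \to H$ is the multiplication, $q : N \times H \times X \to N \times H$ is the projection, and the bottom right arrow is the difference of the pullbacks by the $N$-action on $H \times X$ and by the projection. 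Two consequences are extracted: first, functoriality of the connecting map in \ref{sansucthm} shows that $\ker \Psi^* \subseteq p_{1,G}^* H^2(G, \mu_\infty)$; second, because $\Psi^*(\alpha)$ lies in $\mathrm{Im}(\Psi^*)$, which equals the kernel of the bottom right arrow, commutativity of the right-hand square forces $q^*\bigl((m^* - p_2^*)(d)\bigr) = 0$.

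The crux of the proof will be to upgrade this to the equality $(m^* - p_2^*)(d) = 0$ in $H^2(N \times H, \mu_\infty)$; once this is done, exactness of the top row produces $c \in H^2(G, \mu_\infty)$ with $d = \psi^*(c)$, so that $\Psi^*(\alpha - p_{1,G}^*(c)) = 0$, whence $\alpha - p_{1,G}^*(c) \in \ker \Psi^* \subseteq p_{1,G}^* H^2(G, \mu_\infty)$ and therefore $b \in H^2_G(X, \mu_\infty)$. The main obstacle is precisely the required injectivity of $q^*$ on $(m^* - p_2^*)(d)$: the key observation is that this element lies automatically in $\ker\bigl((e_N \times \mathrm{id}_H)^* : H^2(N \times H, \mu_\infty) \to H^2(H, \mu_\infty)\bigr)$, and on this ``$e_N$-vanishing'' subgroup the injectivity of $q^*$ should follow by base-changing to $K := k(X)$, over which $X_K$ has a canonical $K$-rational point providing a section of $q_K$ (so that $q_K^*$ is split injective), followed by a descent step using \ref{lem21.2cor1} and the Künneth/Leray decomposition of $H^2(N \times H, \mu_\infty)$ induced by the section $(e_N, \mathrm{id}_H)$.
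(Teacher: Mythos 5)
Your proposal is correct in outline, and it is a close cousin of the paper's argument rather than a genuinely independent one: both proofs come down to a diagram chase between the Sansuc sequences of the two $N$-torseurs $H\to G$ and $H\times X\to G\times X$. The difference is which form of the sequence you chase. The paper chases the four-term ladder of the corollaire \ref{sansucthm-cor}, ending in $H^2_{1,e}(N,\mu_{\infty})$; there the whole difficulty is absorbed by the proposition \ref{prop21.1}, since the class you call $d$ is $\lambda(b)\in H^2_{1,e}(H,\mu_{\infty})\sbt H^2_N(H,\mu_{\infty})$ and the chase closes at once. You instead chase the raw six-term sequence of the th\'eor\`eme \ref{sansucthm}, which relocates all the work into the injectivity of $q^*$ on $\Ker\bigl((e_N\times \mathrm{id}_H)^*\bigr)$ --- a step you rightly identify as the crux but only sketch. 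That injectivity is true, and it is closed more directly by (\ref{prop21.1e1}) than by your proposed passage to $k(X)$: since $X_{\bk}$ has a $\bk$-point, $H^2((N\times H)_{\bk},\mu_{\infty})\to H^2((N\times H\times X)_{\bk},\mu_{\infty})$ is split injective, so $\Ker(q^*)\sbt H^2_1(N\times H,\mu_{\infty})$; by (\ref{prop21.1e1}) applied to $N\times H$ one has $H^2_1(N\times H,\mu_{\infty})\cap \Ker\bigl((e_N\times \mathrm{id}_H)^*\bigr)=p_1^*H^2_{1,e}(N,\mu_{\infty})$; and $q^*\circ p_1^*$ is the pullback along the projection $N\times H\times X\to N$, which is injective on $H^2_{1,e}(N,\mu_{\infty})$ by (\ref{prop21.1e1}) applied to $N\times (H\times X)$. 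If you insist on the base change to $K=k(X)$, note that the corollaire \ref{lem21.2cor1} is not quite the right tool (it concerns $\Br_{2/3}$ modulo constants); what you actually need is the injectivity of $H^2_{1,e}(N,\mu_{\infty})\to H^2_{1,e}(N_K,\mu_{\infty})$, which follows from the corollaire \ref{prop21.2lem} together with the surjectivity of $\Gamma_K\to \Gamma_k$. In short: same skeleton as the paper, with the key injectivity moved from the corollaire \ref{sansucthm-cor} into an ad hoc lemma that you state correctly but still have to prove.
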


\begin{proof}
Puisque $H\times X\xrightarrow{\psi_X} G\times X$ est un $N$-torseur,
d'apr\`es le corollaire \ref{sansucthm-cor},  on a un diagramme commutatif de suites exactes:
$$\xymatrix{H^1_e(N,\mu_{\infty})\ar[d]^=\ar[r]&H^2(G,\mu_{\infty})\ar[r]\ar[d]^{p_1^*}&H^2_N(H,\mu_{\infty})\ar[r]\ar[d]^{p_1^*}&H^2_{1,e}(N,\mu_{\infty})\ar[d]^=\\
H^1_e(N,\mu_{\infty})\ar[r]&H^2(G\times X,\mu_{\infty}))\ar[r]^{\psi_X^*}&H^2_N(H\times X,\mu_{\infty}))\ar[r]&H^2_{1,e}(N,\mu_{\infty}).
}$$
 Donc $ (\psi_X^*)^{-1}(p_1^*H^2_N(H,\mu_{\infty}))=p_1^*H^2(G,\mu_{\infty})$.
 Puisque $H^2_{1,e}(H,\mu_{\infty}) \sbt H^2_1(H,\mu_{\infty}) \sbt H^2_N(H,\mu_{\infty}) $ (Proposition \ref{prop21.1} (1)), 
 on a $ (\psi_X^*)^{-1}(p_1^*H^2_{1,e}(H,\mu_{\infty}) ) \subset p_1^*H^2(G,\mu_{\infty}) $.
 La proposition \ref{prop21.1} (3) donne
 $$H^2_H(X,\mu_{\infty})=\{a\in H^2(X,\mu_{\infty})| \psi_X^*(( \rho^*-p_2^* )(a)) \in  p_1^*H^2_{1,e}(H,\mu_{\infty}) \} $$  
 $$\subset   \{a\in H^2(X,\mu_{\infty})| ( \rho^*-p_2^* )(a) \in p_1^*H^2(G,\mu_{\infty})  \} = H^2_G(X,\mu_{\infty}),$$
 d'o\`u le r\'esultat.
\end{proof}

\begin{cor}\label{corbraueralgebraic}
Sous les hypoth\`eses de la proposition \ref{sansucthm-prop}, supposons qu'il existe une $H$-vari\'et\'e $Y$  et un $H$-morphisme $Y\xrightarrow{p}X$ tels que $Y\to X $ soit un $N$-torseur.
Alors 

(1) $H^2(X,\mu_{\infty})\xrightarrow{p^*} H^2(Y,\mu_{\infty})$ satisfait $(p^*)^{-1} H^2_H(Y,\mu_{\infty})=H^2_G(X,\mu_{\infty})$, et on a une suite exacte (o\`u $\lambda$ est l'homomorphisme de Sansuc), fonctorielle en $(X,Y,p,N)$:
$$0\to H^1(X,\mu_{\infty})\to H^1(Y,\mu_{\infty})\to  H^1_e(N,\mu_{\infty})\xrightarrow{\chi} H^2_G(X,\mu_{\infty})\xrightarrow{p^*} H^2_H(Y,\mu_{\infty})\xrightarrow{\lambda}H^2_{1,e}(N,\mu_{\infty}).$$

(2) si $N$ est lin\'eaire, $\Br(X)\xrightarrow{p^*}\Br(Y)$ satisfait $(p^*)^{-1}\Br'_H(Y)=\Br'_G(X)$ et on a  une suite exacte (o\`u $\lambda_{\Br}$ est l'homomorphisme de Sansuc), fonctorielle en $(X,Y,p,N)$:
$$\Pic(Y)\to  \Pic(N)\xrightarrow{\chi} \Br'_G(X)\xrightarrow{p^*} \Br'_H(Y)\xrightarrow{\lambda_{\Br}}\Br_e(N).$$
\end{cor}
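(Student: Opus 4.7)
Le plan est de d\'eduire les deux parties en restreignant la suite exacte longue de Sansuc (th\'eor\`eme \ref{sansucthm}) appliqu\'ee au $N$-torseur $p: Y \to X$ aux sous-groupes invariants, en utilisant la proposition \ref{sansucthm-prop} et le corollaire \ref{sansucthm-cor1} pour identifier les pr\'eimages de $p^*$; pour la partie (2), la suite de Kummer et la suite de Sansuc classique de \cite[Prop. 6.10]{S} convertissent l'\'enonc\'e en $\mu_{\infty}$ en celui en $\Br'$.

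Pour la partie (1), je commence par l'identit\'e $(p^*)^{-1} H^2_H(Y, \mu_\infty) = H^2_G(X, \mu_\infty)$. En consid\'erant $p$ \`a la fois comme $H$-morphisme de $H$-vari\'et\'es (l'action de $H$ sur $X$ \'etant induite par $\psi: H \to G$) et comme $N$-torseur, le corollaire \ref{sansucthm-cor1}(1) donne $(p^*)^{-1} H^2_H(Y, \mu_\infty) = H^2_H(X, \mu_\infty)$, et la proposition \ref{sansucthm-prop} appliqu\'ee \`a $1 \to N \to H \to G \to 1$ fournit $H^2_H(X, \mu_\infty) = H^2_G(X, \mu_\infty)$. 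Pour la suite exacte, le th\'eor\`eme \ref{sansucthm} appliqu\'e au $N$-torseur $p$ fournit la suite exacte longue se terminant en $H^2(X, \mu_\infty) \xrightarrow{p^*} H^2(Y, \mu_\infty)$, que le corollaire \ref{sansucthm-cor} prolonge en $H^2_N(Y, \mu_\infty) \xrightarrow{\lambda} H^2_{1,e}(N, \mu_\infty)$. Puisque l'action de $H$ restreint celle de $N$, on a $H^2_H(Y, \mu_\infty) \sbt H^2_N(Y, \mu_\infty)$, donc $\lambda$ se restreint \`a $H^2_H(Y, \mu_\infty) \to H^2_{1,e}(N, \mu_\infty)$. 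En intersectant la suite de Sansuc prolong\'ee avec les sous-groupes invariants---l'image de $H^1_e(N, \mu_\infty)$ dans $H^2(X, \mu_\infty)$ \'etant $\Ker p^*$, donc contenue dans $H^2_G(X, \mu_\infty)$ par l'identit\'e ci-dessus, et le noyau de $\lambda$ restreint \`a $H^2_H(Y, \mu_\infty)$ \'etant $H^2_H(Y, \mu_\infty)\cap \Im p^* = p^*(H^2_G(X, \mu_\infty))$---on obtient la suite exacte cherch\'ee. La fonctorialit\'e est h\'erit\'ee du th\'eor\`eme \ref{sansucthm}.

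Pour la partie (2), lorsque $N$ est lin\'eaire, le corollaire \ref{sansucthm-cor1}(2) combin\'e avec $\Br'_H(X) = \Br'_G(X)$ (de la proposition \ref{sansucthm-prop}) donne $(p^*_{\Br})^{-1} \Br'_H(Y) = \Br'_G(X)$. Pour la suite exacte en $\Br'$, je combine la suite en $H^2$ de la partie (1) avec les suites courtes de Kummer $0 \to \Pic(\cdot) \otimes \BQ/\BZ \to H^2(\cdot, \mu_\infty) \to \Br(\cdot) \to 0$. Les termes de gauche $\Pic(Y) \to \Pic(N)$ et le morphisme connectant $\chi: \Pic(N) \to \Br'_G(X)$ proviennent de la suite de Sansuc classique de \cite[Prop. 6.10]{S} appliqu\'ee au $N$-torseur $p$; l'image de $\chi$ est bien dans $\Br'_G(X)$ car, via Kummer, $\Pic(N)$ est un quotient de $H^1_e(N, \mu_\infty)$ modulo les caract\`eres, et le morphisme connectant de la partie (1) de $H^1_e(N, \mu_\infty)$ atterrit dans $H^2_G(X, \mu_\infty)$, qui se projette dans $\Br'_G(X)$. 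Pour $N$ connexe lin\'eaire, $\Pic(N)$ est fini, donc $\Pic(N) \otimes \BQ/\BZ = 0$ et $H^2(N, \mu_\infty) \cong \Br(N)$; en cons\'equence $H^2_{1,e}(N, \mu_\infty) \hookrightarrow \Br_{1,e}(N) \sbt \Br_e(N)$, ce qui donne le dernier terme.

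L'obstacle principal est de coordonner les trois suites exactes---Kummer, la suite $\mu_\infty$-Sansuc de la partie (1), et la suite $\BG_m$-Sansuc classique---via une chasse au diagramme soigneuse pour v\'erifier l'exactitude \`a chaque position de la suite finale en $\Br'$; ceci ne demande aucune nouvelle technique au-del\`a des outils d\'ej\`a \'etablis dans l'article, mais exige un suivi attentif des sous-groupes invariants et de leur compatibilit\'e sous les morphismes connectants.
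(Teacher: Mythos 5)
Votre preuve est correcte et suit essentiellement la m\^eme route que l'article : l'identit\'e $(p^*)^{-1}H^2_H(Y,\mu_{\infty})=H^2_G(X,\mu_{\infty})$ s'obtient en appliquant le corollaire \ref{sansucthm-cor1} au $N$-torseur $p$ muni de l'action de $H$ puis la proposition \ref{sansucthm-prop}, et la suite exacte r\'esulte de la restriction aux sous-groupes invariants de la suite de Sansuc prolong\'ee (th\'eor\`eme \ref{sansucthm} et corollaire \ref{sansucthm-cor}), exactement comme dans le texte. La seule divergence, mineure, concerne (2) : l'article invoque directement la version Brauer de la suite de Sansuc (\cite[Thm. 3.10]{C1}) l\`a o\`u vous la red\'erivez de la suite en $\mu_{\infty}$ via Kummer et \cite[Prop. 6.10]{S}, ce qui revient au m\^eme.
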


\begin{proof}
Une application du th\'eor\`eme \ref{sansucthm} et du corollaire \ref{sansucthm-cor} au $N$-torseur $Y\to X $ donne une suite exacte:
$$0\to H^1(X,\mu_{\infty})\to H^1(Y,\mu_{\infty})\to  H^1_e(N,\mu_{\infty})\xrightarrow{\chi} H^2(X,\mu_{\infty})\xrightarrow{p^*} H^2_N(Y,\mu_{\infty})\xrightarrow{\lambda}H^2_{1,e}(N,\mu_{\infty}).$$
Une application de la proposition \ref{sansucthm-cor1} au $N$-torseur $p: Y\to X $ (avec l'action de $H$) donne
  (en utilisant la proposition \ref{sansucthm-prop})
$$\chi (H^1_e(N,\mu_{\infty}))\sbt (p^*)^{-1}(0)\sbt  (p^*)^{-1}(H^2_H(Y,\mu_{\infty}))=H^2_H(X,\mu_{\infty})=H^2_G(X,\mu_{\infty}),$$
d'o\`u on obtient (1).

L'\'enonc\'e (2) d\'ecoule du m\^eme argument que (1) en rempla\c{c}ant le corollaire \ref{sansucthm-cor} par \cite[Thm. 3.10]{C1}.
\end{proof}

\section{Sp\'ecialisation du sous-groupe invariant} \label{5}

Dans toute cette section,  $k$ est un corps quelconque de caract\'eristique $0$. 
Sauf  mention explicite du contraire,  une vari\'et\'e est une $k$-vari\'et\'e.

Consid\'erons maintenant une suite exacte de groupes alg\'ebriques connexes 
\begin{equation}\label{Dlembrauersurjprop-hyp-e}
1\to G \xrightarrow{\varphi } H \xrightarrow{\psi} T \to 1
\end{equation}
 avec $T$ un tore. 
Pour une $H$-vari\'et\'e $Y$ munie d'un $H$-morphisme $f: Y\to T$ et pour tout $t\in T(k)$, la fibre $Y_t$ est une $G$-vari\'et\'e (voir le th\'eor\`eme \ref{proppropbrauersuj} ci-dessous).
Dans \cite[Prop. 3.13]{C1},  on calcule le noyau et le conoyau de l'homomorphisme de sp\'ecialisation $\Br_H(Y)\to \Br_G(Y_t)$ lorsque $H$ est lin\'eaire.
Dans cette section, on g\'en\'eralise ce r\'esultat au cas o\`u $H$  n'est pas n\'ecessairement lin\'eaire (le th\'eor\`eme \ref{proppropbrauersuj}).

 \medskip
 
 La proposition \ref{Dlembrauersurjprop}  suivante g\'en\'eralise \cite[Lem. 5.5]{C1}.

\begin{prop}\label{Dlembrauersurjprop}
Pour la suite exacte (\ref{Dlembrauersurjprop-hyp-e}), on a un diagramme commutatif de suites exactes:
$$\xymatrix{ H^1_e(G,\mu_{\infty})\ar[r] \ar[d]^=& H^2_{1,e}(T,\mu_{\infty})\ar[r]^{\psi^*}\ar@{^{(}->}[d]& H^2_{1,e}(H,\mu_{\infty})\ar[r]^{\varphi^*} \ar@{^{(}->}[d]\ar@{}[rd]|{(1)}& H^2_{1,e}(G,\mu_{\infty})\ar[r]\ar[d]^=& H^3(k,T^*)\\
 H^1_e(G,\mu_{\infty})\ar[r]& H^2_T(T,\mu_{\infty})\ar[r]^{\psi^*}&H^2_H(H,\mu_{\infty})\ar[r]^{\lambda}&H^2_{1,e}(G,\mu_{\infty}) &,
}$$
o\`u la deuxi\`eme ligne est induite par application du corollaire \ref{corbraueralgebraic} (1) au $G$-torseur $H\to T$.
\end{prop}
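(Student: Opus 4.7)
Le plan de d\'emonstration est le suivant. D'abord, j'\'etablis la deuxi\`eme ligne par application directe du corollaire \ref{corbraueralgebraic}(1) au $G$-torseur $\psi: H\to T$, o\`u l'on fait agir $H$ par translation \`a gauche sur lui-m\^eme et sur $T$ via $\psi$, de sorte que $\psi$ devient un $H$-morphisme. Les inclusions verticales centrales du diagramme r\'esulteront de la proposition \ref{prop21.1}(2) appliqu\'ee aux torseurs triviaux $T\to T$ et $H\to H$: on obtient les \'egalit\'es $H^2_T(T,\mu_\infty)=H^2_1(T,\mu_\infty)$ et $H^2_H(H,\mu_\infty)=H^2_1(H,\mu_\infty)$, et $H^2_{1,e}$ est facteur direct de $H^2_1$ via la section induite par le point neutre.

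Ensuite, pour la premi\`ere ligne, j'utilise le corollaire \ref{prop21.2lem} qui identifie $H^1_e(-,\mu_\infty)$ aux invariants galoisiens de $H^1(-_{\bk},\mu_\infty)$ et $H^2_{1,e}(-,\mu_\infty)$ \`a $H^1(k,H^1(-_{\bk},\mu_\infty))$. Pour le tore $T$, la suite de Kummer combin\'ee au fait que $\Pic(T_{\bk})=0$ donne $H^1(T_{\bk},\mu_\infty)\cong T^*\otimes \BQ/\BZ$. J'applique alors le th\'eor\`eme \ref{sansucthm} au $G$-torseur $H_{\bk}\to T_{\bk}$ pour obtenir une suite exacte de $\Gamma_k$-modules
$$0\to T^*\otimes \BQ/\BZ\to H^1(H_{\bk},\mu_\infty)\to H^1(G_{\bk},\mu_\infty)\to H^2(T_{\bk},\mu_\infty),$$
puis je prends la cohomologie galoisienne $H^i(k,-)$ de cette suite scind\'ee en deux courtes suites exactes. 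La fl\`eche finale $H^2_{1,e}(G,\mu_\infty)\to H^3(k,T^*)$ sera construite comme la composition du connecteur $H^1(k,H^1(G_{\bk},\mu_\infty))\to H^2(k,T^*\otimes \BQ/\BZ)$ avec le Bockstein $H^2(k,T^*\otimes \BQ/\BZ)\to H^3(k,T^*)$ issu de la suite exacte $0\to T^*\to T^*\otimes \BQ\to T^*\otimes \BQ/\BZ\to 0$.

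Enfin, la commutativit\'e du carr\'e $(1)$ et des carr\'es voisins d\'ecoulera de la naturalit\'e de la suite de Sansuc (corollaire \ref{sansucthm-cor}) par rapport au morphisme de torseurs $\varphi: G\to H$ au-dessus de $T$: l'homomorphisme de Sansuc $\lambda$ restreint au facteur direct $H^2_{1,e}(H,\mu_\infty)\sbt H^2_H(H,\mu_\infty)$ co\"incide avec $\varphi^*$. L'obstacle principal sera l'exactitude en $H^2_{1,e}(G,\mu_\infty)$: il faudra d\'evisser soigneusement la suite exacte longue issue de Sansuc en courtes suites exactes puis v\'erifier par chasse au diagramme que le noyau du morphisme compos\'e vers $H^3(k,T^*)$ co\"incide avec l'image de $\varphi^*$, ce qui demande notamment de contr\^oler la contribution du quotient uniquement divisible $T^*\otimes \BQ$.
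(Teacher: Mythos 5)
Your overall architecture matches the paper's: second line from Corollary \ref{corbraueralgebraic}(1) applied to the $G$-torseur $H\to T$, vertical identifications from Proposition \ref{prop21.1}(2), the identification $H^2(k,H^1(T_{\bk},\mu_{\infty}))\cong H^2(k,T^*\otimes\BQ/\BZ)\cong H^3(k,T^*)$ via Kummer, and the first line obtained by applying $H^i(k,-)$ to a sequence of geometric $H^1$'s. However, there is a genuine gap at the step you yourself flag as the main obstacle. The Sansuc sequence over $\bk$ only gives the four-term exact sequence
$$0\to T^*\otimes\BQ/\BZ\to H^1(H_{\bk},\mu_{\infty})\xrightarrow{\varphi^*} H^1(G_{\bk},\mu_{\infty})\to H^2(T_{\bk},\mu_{\infty}),$$
and $H^2(T_{\bk},\mu_{\infty})$ is \emph{not} zero once $\dim T\geq 2$ (topologically $T_{\BC}\simeq (S^1)^n$). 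Splitting this into two short exact sequences replaces $H^1(G_{\bk},\mu_{\infty})$ by the image $C'$ of $\varphi^*$; applying $H^1(k,-)$ then computes $H^1(k,C')$, not $H^1(k,H^1(G_{\bk},\mu_{\infty}))\cong H^2_{1,e}(G,\mu_{\infty})$, and the discrepancy between the two is exactly what ruins the claimed exactness at $H^2_{1,e}(G,\mu_{\infty})$ and the construction of the map to $H^3(k,T^*)$ as a connecting homomorphism. Your appeal to the Bockstein and the uniquely divisible quotient $T^*\otimes\BQ$ does not address this; the problem is the possible non-surjectivity of $\varphi^*$ on geometric $H^1$, not the coefficient change.

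The missing ingredient, which is the actual content of the paper's proof, is that $\varphi^*:H^1(H_{\bk},\mu_{\infty})\to H^1(G_{\bk},\mu_{\infty})$ \emph{is} surjective. The paper gets this not from the Sansuc sequence but from the exact sequence of abelian groups $0\to\pi_1(G_{\bk})\to\pi_1(H_{\bk})\to\pi_1(T_{\bk})\to 0$ (exactness on the left uses $\pi_2^{\mathrm{top}}(T_{\BC})=0$), which splits because $\pi_1^{\mathrm{top}}(T_{\BC})\cong\BZ^n$ is free; dualizing into $\BQ/\BZ$ then yields the short exact sequence of $\Gamma_k$-modules $0\to H^1(T_{\bk},\mu_{\infty})\to H^1(H_{\bk},\mu_{\infty})\to H^1(G_{\bk},\mu_{\infty})\to 0$, to which one applies $H^i(k,-)$ and Corollary \ref{prop21.2lem}. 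You should also justify commutativity of the square $(1)$ via Proposition \ref{prop21.1}(4) with the base point $e_H$ (so that $\lambda=\rho_{e_H}^*-i_{e_H}^*=\varphi^*$ on classes killed by $e_H^*$), rather than by an unproved ``naturality'' of $\lambda$ in the group variable.
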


\begin{proof}
On applique le corollaire \ref{corbraueralgebraic} (1) au  $G$-torseur $H\to T$ et on obtient la suite exacte de la deuxi\`eme ligne. 

D'apr\`es la proposition \ref{prop21.1},  $e_T^*: H^2_1(T,\mu_{\infty})\to H^2(k,\mu_{\infty})$ \'equivaut \`a la composition:
$$H^2_1(T,\mu_{\infty})\cong H^2_T(T,\mu_{\infty}) \to H^2_H(H,\mu_{\infty})\cong H^2_1(H,\mu_{\infty}) \xrightarrow{e_H^*} H^2(k,\mu_{\infty}).$$
Donc $\Im  H^1_e(G,\mu_{\infty})\sbt \Ker(e_T^*)$ et la premi\`ere ligne est exacte en $H^2_{1,e}(T,\mu_{\infty})$.

Par la suite exacte de Kummer, $H^1(T_{\bk},\mu_{\infty})\cong T^*\otimes \BQ/\BZ$ et donc 
\begin{equation}\label{Dlembrauersurjprop-e1}
H^2(k,H^1(T_{\bk},\mu_{\infty}))\cong H^3(k,T^*).
\end{equation}
Le corollaire \ref{corCartan1} (ii) induit une suite exacte 
$$0\to H^1(T_{\bk},\mu_{\infty})\to H^1(H_{\bk},\mu_{\infty})\xrightarrow{\varphi^*} H^1(G_{\bk},\mu_{\infty})\to 0.$$
 Appliquons $H^i(k,-)$ \`a cette suite exacte. Le corollaire \ref{prop21.2lem} et (\ref{Dlembrauersurjprop-e1}) donnent une suite exacte:
 $$ H^2_{1,e}(T,\mu_{\infty})\to H^2_{1,e}(H,\mu_{\infty})\xrightarrow{\varphi^*} H^2_{1,e}(G,\mu_{\infty})\to H^3(k,T^*).$$
Ceci donne la premi\`ere suite exacte.

D'apr\`es la proposition \ref{prop21.1} (4), le carr\'e (1) est commutatif et donc le diagramme est commutatif.
\end{proof}

\begin{cor}\label{Dlembrauersurj}
Dans la suite exacte (\ref{Dlembrauersurjprop-hyp-e}), 
si l'on a $H^3(k,T^*)=0$, alors les homomorphismes $H^2_{1,e}(H,\mu_{\infty})\to H^2_{1,e}(G,\mu_{\infty})$ et $\Br_{2/3,e}(H)\xrightarrow{\varphi^*} \Br_{2/3,e}(G)$ sont surjectifs.
\end{cor}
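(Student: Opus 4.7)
The plan is to deduce both surjectivities directly from the proposition just proved, together with the exact sequence of Corollary \ref{prop21.2lem} applied functorially to $\varphi: G \to H$.

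First, I would read off the first assertion from the top row of the diagram in Proposition \ref{Dlembrauersurjprop}, namely the exact sequence
\[
H^2_{1,e}(H,\mu_\infty) \xrightarrow{\varphi^*} H^2_{1,e}(G,\mu_\infty) \to H^3(k,T^*).
\]
Under the hypothesis $H^3(k,T^*) = 0$, this gives the surjectivity of $\varphi^*$ on $H^2_{1,e}$.

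For the second assertion, I would use Corollary \ref{prop21.2lem} for $G$ and $H$ and its naturality with respect to $\varphi$, obtaining a commutative diagram with exact rows
\[
\xymatrix{
H^2_{1,e}(H,\mu_\infty) \ar[r] \ar@{->>}[d]^{\varphi^*} & \Br_{2/3,e}(H) \ar[r] \ar[d]^{\varphi^*} & 0 \\
H^2_{1,e}(G,\mu_\infty) \ar[r] & \Br_{2/3,e}(G) \ar[r] & 0.
}
\]
The left vertical arrow is surjective by the step above, and both horizontal arrows are surjective by Corollary \ref{prop21.2lem}. A trivial diagram chase then shows that the right vertical arrow is surjective as well.

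There is no real obstacle; the only point to keep in mind is the functoriality in $(G,H,\varphi)$ of the sequence in Corollary \ref{prop21.2lem}, which follows from its construction (the sequence is obtained by applying $H^i(k,-)$ to the canonical distinguished triangle relating $\tau_{\leq 1} R\pi_*\mu_\infty$ and $\tau_{\leq 1}R\pi_*\BG_m$, and both are functorial in the group).
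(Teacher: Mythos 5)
Your proposal is correct and follows exactly the route the paper intends: the paper's proof is the one-line "this follows from Proposition \ref{Dlembrauersurjprop} and Corollary \ref{prop21.2lem}", and your two steps (reading the surjectivity of $\varphi^*$ on $H^2_{1,e}$ from the exactness of the top row of the diagram in Proposition \ref{Dlembrauersurjprop} once $H^3(k,T^*)=0$, then transporting it to $\Br_{2/3,e}$ via the surjection $H^2_{1,e}(-,\mu_\infty)\twoheadrightarrow \Br_{2/3,e}(-)$ of Corollary \ref{prop21.2lem} and its naturality) are precisely the details being left implicit. Nothing to add.
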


\begin{proof}
Ceci d\'ecoule de la proposition \ref{Dlembrauersurjprop} et du corollaire \ref{prop21.2lem}.
\end{proof}

\begin{lem}\label{Dlemsuite1}
Soient $G$, $N$ deux groupes alg\'ebriques connexes et
$X$ une $G$-vari\'et\'e lisse g\'eom\'e\-triquement int\`egre.  
Soient $H:=N\times G$ et $P$ une $H$-vari\'et\'e tels que $P$ soit un $N$-torseur sur $k$.
Soient $Y:= P\times X$ et $Y\xrightarrow{p_1}P$, $Y\xrightarrow{p_2}X$ les deux projections. Supposons donn\'ee une action:
 $$H\times Y\to Y:\ (n,g)\times (p,x)\mapsto ((n,g)\cdot p,g\cdot x).$$
Alors 

(1) si $P=N$, on a un isomorphisme:
$(p_1^*,p_2^*): H^2_{1,e}(N,\mu_{\infty})\oplus H^2_G(X,\mu_{\infty}) \iso H^2_H(Y,\mu_{\infty});  $
 
(2) si $H^3(k,\mu_{\infty})=0$,  on a un isomorphisme:
$$(p_1^*,p_2^*): H^2_1(P,\mu_{\infty})/\Im H^2(k,\mu_{\infty})\oplus H^2_G(X,\mu_{\infty})/\Im H^2(k,\mu_{\infty}) \iso H^2_H(Y,\mu_{\infty})/\Im H^2(k,\mu_{\infty}).  $$
\end{lem}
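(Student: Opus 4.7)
The plan is to apply the Sansuc-style exact sequence of Corollary \ref{corbraueralgebraic} (1) to the $N$-torseur $p_2 : Y \to X$, which is $H$-equivariant once the $G$-action on $X$ is extended to an $H$-action via the projection $q : H = N \times G \twoheadrightarrow G$. This yields the exact sequence
\[
H^1_e(N, \mu_{\infty}) \xrightarrow{\chi_Y} H^2_G(X, \mu_{\infty}) \xrightarrow{p_2^*} H^2_H(Y, \mu_{\infty}) \xrightarrow{\lambda_Y} H^2_{1,e}(N, \mu_{\infty}).
\]
First I analyze $\chi_Y$. Since $p_2 : Y \to X$ is the base change of the $N$-torseur $P \to \Spec k$ along the structural morphism $\pi_X : X \to \Spec k$, functoriality of Corollary \ref{corbraueralgebraic} (1) gives $\chi_Y = \pi_X^* \circ \chi_P$, where $\chi_P : H^1_e(N, \mu_{\infty}) \to H^2(k, \mu_{\infty})$. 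Consequently $\Im \chi_Y \subset \Im(H^2(k, \mu_{\infty}) \to H^2_G(X, \mu_{\infty}))$, and $\chi_Y = 0$ whenever $P(k) \neq \emptyset$.

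The central computation will be to show that for every $a \in H^2_1(P, \mu_{\infty}) = H^2_N(P, \mu_{\infty})$ (Proposition \ref{prop21.1} (2)), one has $\lambda_Y(p_1^* a) = \lambda_P(a)$, where $\lambda_P$ is the Sansuc homomorphism for the $N$-torseur $P$. The idea is a direct diagram chase: $p_1 : Y \to P$ is $H$-equivariant, and pulling back the defining identity $\rho_{P,N}^* a - q_{2,P}^* a = q_{1,P}^* \lambda_P(a)$ from Proposition \ref{prop21.1} (3) along $\mathrm{id}_N \times p_1$ yields $\rho_{Y,N}^* p_1^* a - p_{2,Y}^* p_1^* a = q_{1,Y}^* \lambda_P(a)$, after which the uniqueness of $\lambda_Y$ gives the claimed equality.

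For (1), I use $p_0 \in P(k)$: the $N$-equivariant isomorphism $\rho_{p_0} : N \iso P$ identifies $H^2_{1,e}(N, \mu_{\infty})$ with a direct summand of $H^2_1(P, \mu_{\infty})$, and Proposition \ref{prop21.1} (4) applied to the pointed $N$-torseur $(P, p_0)$ gives $\lambda_P|_{H^2_{1,e}(N, \mu_{\infty})} = \mathrm{id}$. Combined with the central computation, $p_1^*$ is thus a section of $\lambda_Y$ on $H^2_{1,e}(N, \mu_{\infty})$. Since $\chi_Y = 0$ makes $p_2^*$ an isomorphism onto $\Ker \lambda_Y$, these combine to give the direct sum decomposition of (1).

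For (2), Corollary \ref{sansucthm-cor3} applied to $P \to \Spec k$ (this is where $H^3(k, \mu_{\infty}) = 0$ enters) shows that $\lambda_P$ induces an isomorphism $H^2_1(P, \mu_{\infty}) / \Im H^2(k, \mu_{\infty}) \iso H^2_{1,e}(N, \mu_{\infty})$. Hence $p_1^*$ provides, modulo $\Im H^2(k, \mu_{\infty})$, a section of $\lambda_Y$, and combined with the injectivity of $p_2^*$ modulo $\Im H^2(k, \mu_{\infty})$ (following from $\Im \chi_Y \subset \Im H^2(k, \mu_{\infty})$), this yields the decomposition of (2). The main obstacle will be the central computation $\lambda_Y \circ p_1^* = \lambda_P$, which requires careful bookkeeping of the interacting actions and projections; the other steps then follow formally from the already-established structural results on $H^2_G$, $H^2_H$ and the Sansuc sequence.
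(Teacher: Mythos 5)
Your proposal is correct and follows essentially the same route as the paper: the paper applies Corollaire \ref{corbraueralgebraic} (1) simultaneously to the $N$-torseurs $P\to\Spec k$ and $Y\to X$, obtains a commutative ladder of Sansuc sequences by functoriality, and concludes by a diagram chase, while your two key identities $\chi_Y=\pi_X^*\circ\chi_P$ and $\lambda_Y\circ p_1^*=\lambda_P$ are exactly the commutativity of the squares of that ladder, and your splitting argument (via $P(k)\neq\emptyset$ or Corollaire \ref{sansucthm-cor3}) is the same chase phrased with explicit sections. No gap.
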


\begin{proof}
Par la proposition \ref{prop21.1} (2), 
on a $H^2_N(P,\mu_{\infty})\cong H^2_1(P,\mu_{\infty})$.
D'apr\`es le corollaire \ref{corbraueralgebraic} (1) et l'isomorphisme $H^2_{1,e}(N,\mu_{\infty})\cong H^2_1(N,\mu_{\infty})/\Im H^2(k,\mu_{\infty})$, on a un diagramme commutatif de suites exactes
$$\xymatrix{H^1(P,\mu_{\infty})\ar[r]^{\vartheta_1}\ar[d]&H^1_e(N,\mu_{\infty})\ar[d]^=\ar[r]&H^2(k,\mu_{\infty})\ar[r]\ar[d] 
&H^2_1(P,\mu_{\infty})\ar[d]^{p_1^*}\ar[r]^{\vartheta_2}&H^2_{1,e}(N,\mu_{\infty})\ar[d]^=\\
H^1(Y,\mu_{\infty})\ar[r]&H^1_e(N,\mu_{\infty}) \ar[r]&H^2_G(X,\mu_{\infty})\ar[r]^-{p_2^*}&
 H^2_H(Y,\mu_{\infty})\ar[r] &H^2_{1,e}(N,\mu_{\infty}).
}$$
Puisque $P(k)\neq\emptyset$ ou $H^3(k,\mu_{\infty})=0$, d'apr\`es le corollaire \ref{sansucthm-cor3}, l'homomorphisme   $\vartheta_2$ est surjectif.
Une chasse au diagramme donne une suite exacte 
\begin{equation}\label{Dlemsuite1-e}
 H^2(k,\mu_{\infty})\xrightarrow{\vartheta_3} H^2_1(P,\mu_{\infty})\oplus H^2_G(X,\mu_{\infty}) \xrightarrow{(p_1^*,p_2^*)} H^2_H(Y,\mu_{\infty})\to 0,
\end{equation}
et, si $P(k)\neq\emptyset$, l'homomorphisme $\vartheta_3$ est injectif, d'o\`u le r\'esultat.
\end{proof}

\begin{thm}\label{proppropbrauersuj}
Consid\'erons la suite exacte (\ref{Dlembrauersurjprop-hyp-e}).
Soient $Y$ une $H$-vari\'et\'e lisse, g\'eom\'e\-triquement int\`egre et $Y\xrightarrow{f}T$ un $H$-morphisme.
Notons $H^2_{1,e}(H,\mu_{\infty})\xrightarrow{\varphi^*}H^2_{1,e}(G,\mu_{\infty})$ l'homomorphisme induit par $\varphi: G\to H$.
Alors, pour tout $t\in T(k)$, la fibre $Y_t$ est $G$-invariante et 
on a une suite exacte naturelle
$$H^2_{1,e}(T,\mu_{\infty})\to H^2_H(Y,\mu_{\infty})\to H^2_{G}(Y_t,\mu_{\infty})\to \coker (\varphi^*).$$
\end{thm}

\begin{proof}
D'apr\`es \cite[Prop. 2.2]{CX1}, $Y_t$ est lisse, g\'eom\'etriquement int\`egre.
Puisque $T$ est commutatif, la fibre $Y_t$ est $G$-invariante.
Notons:
 $$Y_t\xrightarrow{i}H\times Y_t: y\mapsto (e_H,y)\ \ \ \text{et}\ \ \  H\times Y_t\xrightarrow{\rho}Y: (h,y)\mapsto h\cdot y .$$
Alors $\rho\circ i $ est l'immersion $Y_t\sbt Y$. On fixe des actions
$$H\times G\curvearrowright H\times Y_t: (h,g)\times (h',y)\mapsto (hh'g^{-1},g\cdot y)\ \ \ \text{et}\ \ \ 
H\times G\curvearrowright H: (h,g)\times h'\mapsto hh'g^{-1}. $$
Par d\'efinition, $Y\cong H\times^{G}Y_t$ est le produit contract\'e (cf. \cite[Lem. 2.2.3]{sko}) et on a un diagramme commutatif de $H\times G$-morphismes
$$\xymatrix{Y_t&H\times Y_t\ar[l]^-{p_2}\ar[r]_-{p_1}\ar[d]^{\rho}&H\ar[d]^{t\cdot \psi (-)}\\
&Y\ar[r]^f&T
}$$
tels que les colonnes soient  des $G$-torseurs. 

On applique le lemme \ref{Dlemsuite1} (1) \`a la $H\times G$-vari\'et\'e $H\times Y_t$, et on obtient un isomorphisme
 \begin{equation}\label{proppropbrauersuj-e1}
 H^2_{1,e}(H,\mu_{\infty})\oplus H^2_{G}(Y_t,\mu_{\infty})\xrightarrow{(p_1^*,p_2^*)}H^2_{H\times G}(H\times Y_t,\mu_{\infty}) .
 \end{equation}
Le corollaire \ref{corbraueralgebraic} (1) et la proposition \ref{Dlembrauersurjprop} donnent un diagramme commutatif de suites exactes:
$$\xymatrix{H^1_e(G,\mu_{\infty})\ar[r]\ar[d]^=&H^2_{1,e}(T,\mu_{\infty})\ar[r]\ar[d]&H^2_{1,e}(H,\mu_{\infty})\ar[r]^{\varphi^*}\ar[d]^{p_1^*}&H^2_{1,e}(G,\mu_{\infty})\ar[d]^=\\
H^1_e(G,\mu_{\infty})\ar[r]&H^2_H(Y,\mu_{\infty})\ar[r]^-{\rho^*}&H^2_{H\times G}(H\times Y_t,\mu_{\infty}) \ar[r] \ar[d]^{i^*}&H^2_{1,e}(G,\mu_{\infty})\\
&&H^2(Y_t,\mu_{\infty})&.
}$$
Puisque $ p_2\circ i=id$ et $i^*\circ p_1^*=0$, d'apr\`es (\ref{proppropbrauersuj-e1}), on a $H^2_{G}(Y_t,\mu_{\infty})= \Im (i^*)\cong \coker(p_1^*)$.
Une chasse au diagramme donne l'\'enonc\'e.
\end{proof}

\begin{cor}\label{propbrauersuj}
Sous les hypoth\`eses de le th\'eor\`eme \ref{proppropbrauersuj},
supposons $H^3(k,T^*)=0$.
Alors, pour tout $t\in T(k)$, les homomorphismes $H^2_H(Y,\mu_{\infty})\to H^2_{G}(Y_t,\mu_{\infty})$ et $ \Br'_H(Y)\to \Br'_{G}(Y_t)$ sont surjectifs.
\end{cor}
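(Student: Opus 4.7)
Le plan est de combiner directement la proposition \ref{proppropbrauersuj} avec le corollaire \ref{Dlembrauersurj}. La proposition \ref{proppropbrauersuj} fournit la suite exacte
$$H^2_{1,e}(T,\mu_{\infty})\to H^2_H(Y,\mu_{\infty})\to H^2_{G}(Y_t,\mu_{\infty})\to \coker (\varphi^*),$$
où $\varphi^*: H^2_{1,e}(H,\mu_{\infty})\to H^2_{1,e}(G,\mu_{\infty})$ est l'homomorphisme induit par $\varphi: G\to H$ dans la suite exacte (\ref{Dlembrauersurjprop-hyp-e}).

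Sous l'hypothèse $H^3(k,T^*)=0$, le corollaire \ref{Dlembrauersurj} affirme précisément que $\varphi^*$ est surjectif, donc $\coker(\varphi^*)=0$. La suite exacte ci-dessus devient alors
$$H^2_{1,e}(T,\mu_{\infty})\to H^2_H(Y,\mu_{\infty})\to H^2_{G}(Y_t,\mu_{\infty})\to 0,$$
ce qui établit la surjectivité de la première flèche.

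Pour la deuxième surjectivité, on utilise la définition $\Br'_G(X):=\Im(H^2_G(X,\mu_{\infty})\to \Br(X))$ fournie par la définition \ref{BDef1} (via la suite de Kummer). La fonctorialité de cette construction par restriction à la fibre $Y_t$ donne un diagramme commutatif
$$\xymatrix{H^2_H(Y,\mu_{\infty}) \ar@{->>}[r] \ar@{->>}[d] & H^2_{G}(Y_t,\mu_{\infty}) \ar@{->>}[d] \\ \Br'_H(Y) \ar[r] & \Br'_G(Y_t)}$$
dont les flèches verticales sont surjectives par définition même de $\Br'$ et dont la flèche horizontale supérieure vient d'être établie comme surjective. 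Une chasse au diagramme immédiate (tout élément de $\Br'_G(Y_t)$ se relève à $H^2_G(Y_t,\mu_{\infty})$, puis à $H^2_H(Y,\mu_{\infty})$, puis se projette dans $\Br'_H(Y)$) donne la surjectivité recherchée.

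Il n'y a pas d'obstacle majeur : l'argument est essentiellement une concaténation des résultats préliminaires. La seule vérification technique à effectuer est la compatibilité du diagramme ci-dessus, qui résulte directement de la naturalité de la suite de Kummer. On notera que l'hypothèse $H^3(k,T^*)=0$ intervient uniquement via le corollaire \ref{Dlembrauersurj}, c'est-à-dire à travers la proposition \ref{Dlembrauersurjprop} appliquée à la suite exacte (\ref{Dlembrauersurjprop-hyp-e}).
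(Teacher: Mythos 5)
Votre démonstration est correcte et suit essentiellement le même chemin que celle (très condensée) du texte : la surjectivité de $H^2_H(Y,\mu_{\infty})\to H^2_G(Y_t,\mu_{\infty})$ s'obtient en annulant $\coker(\varphi^*)$ grâce à la proposition \ref{Dlembrauersurjprop} (via le corollaire \ref{Dlembrauersurj}) dans la suite exacte de la proposition \ref{proppropbrauersuj}, puis la surjectivité au niveau de $\Br'$ découle de la définition de $\Br'$ comme image de $H^2_{(-)}(\cdot,\mu_{\infty})$ et de la naturalité de la suite de Kummer. Votre rédaction explicite d'ailleurs plus proprement le passage de $H^2$ à $\Br'$ que la simple référence du texte au corollaire \ref{prop21.2lem}.
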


\begin{proof}
Ceci d\'ecoule du th\'eor\`eme \ref{proppropbrauersuj} et de la proposition \ref{Dlembrauersurjprop}.
\end{proof}

\section{Descente}

Dans toute cette section,  $k$ est un corps de nombres. 
Sauf  mention explicite du contraire,  une vari\'et\'e est une $k$-vari\'et\'e.

La m\'ethode de descente des points ad\'eliques est \'etablie par Colliot-Th\'el\`ene et Sansuc dans \cite{CTS}.
L'auteur \'etudie la m\'ethode de descente des points ad\'eliques orthogonaux aux sous-groupes de Brauer invariants dans \cite{C1,C5} et, en particulier, \cite[Prop. 5.7]{C5} est utilis\'e dans la d\'emonstration de \cite[Thm. 1.4]{C5}.

Dans cette section, on donne une variante de \cite[Prop. 5.1]{C5} et une variante de \cite[Prop. 5.7]{C5} dans le cas o\`u le $k$-groupe n'est pas forcement lin\'eaire.
La premi\`ere est la proposition \ref{descprop4.1}, qui \'etablit la formule de descente par rapport au sous-groupe de Brauer invariant pour un torseur sous un $k$-groupe de type multiplicatif. 
La deuxi\`eme est la proposition \ref{prop21.3}, qui suit l'id\'ee de Demarche dans  \cite[Prop. 5]{D09}.

\begin{lem}\label{descprop4.1lem1}
Soient $1\to T\to H\xrightarrow{\psi}G\to 1$ une suite exacte de groupes alg\'ebriques connexes avec $T$ un tore quasi-trivial, 
$X$ une $G$-vari\'et\'e lisse g\'eom\'e\-triquement int\`egre, $Y$ une $H$-vari\'et\'e lisse  et $Y\xrightarrow{f}X$ un $H$-morphisme tels que $Y\to X $ soit un $T$-torseur.
Alors on a  $$X(\RA_k)^{\Br'_G(X)}=f(Y(\RA_k)^{\Br'_{H}(Y)}).$$
\end{lem}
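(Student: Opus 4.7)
Le plan est de s\'eparer les deux inclusions. L'inclusion $f(Y(\RA_k)^{\Br'_H(Y)}) \sbt X(\RA_k)^{\Br'_G(X)}$ d\'ecoule imm\'ediatement du corollaire \ref{corbraueralgebraic} (2) appliqu\'e au $T$-torseur $f$ (l'hypoth\`ese de lin\'earit\'e y est v\'erifi\'ee puisque $T$ est un tore): celui-ci fournit $f^*(\Br'_G(X)) \sbt \Br'_H(Y)$, d'o\`u $\sum_v \inv_v(\alpha(f(y_v))) = \sum_v \inv_v((f^*\alpha)(y_v)) = 0$ pour tout $\alpha \in \Br'_G(X)$ et tout $(y_v) \in Y(\RA_k)^{\Br'_H(Y)}$.

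Pour l'inclusion r\'eciproque, je partirais de $(x_v) \in X(\RA_k)^{\Br'_G(X)}$. Le tore $T$ \'etant quasi-trivial, le lemme de Shapiro et le th\'eor\`eme 90 de Hilbert donnent $H^1(k_v, T) = 0$ en toute place $v$; chaque fibre $f^{-1}(x_v)$ admet donc un $k_v$-point, et je fixerais un rel\`evement $(y_v^0) \in Y(\RA_k)$ avec $f(y_v^0) = (x_v)$. La forme $\Psi : \Br'_H(Y) \to \BQ/\BZ$ d\'efinie par $\Psi(\beta) := \sum_v \inv_v(\beta(y_v^0))$ s'annule sur $f^*(\Br'_G(X))$ par hypoth\`ese sur $(x_v)$; d'apr\`es l'exactitude de la suite du corollaire \ref{corbraueralgebraic} (2), elle se factorise \`a travers l'homomorphisme de Sansuc $\lambda_{\Br}: \Br'_H(Y) \to \Br_{2/3,e}(T)$ en un morphisme $\bar\Psi : \lambda_{\Br}(\Br'_H(Y)) \to \BQ/\BZ$.

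Il resterait \`a corriger $(y_v^0)$ par un ad\`ele $(t_v) \in T(\RA_k)$ en posant $y_v := t_v \cdot y_v^0$. La formule (\ref{invequa2.3e1}), appliqu\'ee \`a l'action de $T \sbt H$ sur $Y$, donne $\beta(t_v \cdot y_v^0) = \lambda_{\Br}(\beta)(t_v) + \beta(y_v^0)$ dans $\Br(k_v)$, d'o\`u $\sum_v \inv_v(\beta(y_v)) = \sum_v \inv_v(\lambda_{\Br}(\beta)(t_v)) + \Psi(\beta)$; il suffit donc de trouver $(t_v) \in T(\RA_k)$ tel que le caract\`ere $\gamma \mapsto \sum_v \inv_v(\gamma(t_v))$ sur $\Br_{2/3,e}(T)$ se restreigne \`a $-\bar\Psi$ sur le sous-groupe $\lambda_{\Br}(\Br'_H(Y))$. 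L'obstacle principal r\'eside pr\'ecis\'ement dans cette derni\`ere assertion d'existence: pour $T \cong \prod_i R_{K_i/k}\BG_m$ quasi-trivial, le corollaire \ref{prop21.2lem} et le lemme de Shapiro identifient $\Br_{2/3,e}(T) \cong \bigoplus_i H^1(K_i, \BQ/\BZ)$, et l'accouplement consid\'er\'e se ram\`ene aux accouplements de r\'eciprocit\'e globale $\RA_{K_i}^{\times} \times H^1(K_i, \BQ/\BZ) \to \BQ/\BZ$; leur surjectivit\'e sur les caract\`eres continus, cons\'equence classique de la th\'eorie du corps de classes (et cas particulier de la dualit\'e de Poitou-Tate pour les modules galoisiens quasi-triviaux), fournit le $(t_v)$ cherch\'e et ach\`eve la preuve.
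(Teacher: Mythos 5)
Votre argument est correct dans sa substance, mais il ne suit pas la m\^eme \'economie que celui de l'article : la d\'emonstration donn\'ee ici tient en une ligne et consiste \`a combiner la formule de descente \cite[(3.2)]{C5}, d\'ej\`a \'etablie pour un torseur sous un tore quasi-trivial, avec l'identification $(f^*)^{-1}\Br'_H(Y)=\Br'_G(X)$ du corollaire \ref{corbraueralgebraic} (2). Vous red\'emontrez en fait cette formule de descente par l'argument classique de Colliot-Th\'el\`ene--Sansuc (rel\`evement place par place, puis correction par un ad\`ele du tore via la r\'eciprocit\'e globale) ; les deux d\'emonstrations reposent donc sur le m\^eme m\'ecanisme, la v\^otre \'etant simplement auto-contenue l\`a o\`u l'article renvoie \`a \cite{C5}.

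Deux points demandent toutefois \`a \^etre pr\'ecis\'es pour que votre r\'edaction soit compl\`ete. (a) L'annulation de $H^1(k_v,T)$ fournit un $k_v$-point de chaque fibre, mais pour obtenir un point ad\'elique $(y_v^0)\in Y(\RA_k)$ (et pour que la somme d\'efinissant $\Psi$ soit finie) il faut aussi des $\CO_v$-points pour presque toute place : on \'etend $f$ en un torseur sous un mod\`ele entier $\CT$ de $T$ au-dessus de mod\`eles entiers de $X$ et de $Y$, et on utilise $H^1(\CO_v,\CT)=0$ pour un tore quasi-trivial. (b) Le caract\`ere $\bar\Psi$ n'est d\'efini que sur le sous-groupe $\lambda_{\Br}(\Br'_H(Y))$ de $\Br_{2/3,e}(T)\cong\bigoplus_i H^1(K_i,\BQ/\BZ)$ ; pour le r\'ealiser par un ad\`ele de $T$, il faut d'abord l'\'etendre \`a $\bigoplus_i H^1(K_i,\BQ/\BZ)$ tout entier --- ce qui est possible car $\BQ/\BZ$ est divisible, donc injectif comme $\BZ$-module --- puis invoquer la surjectivit\'e de l'application de r\'eciprocit\'e $\RA_{K_i}^{\times}\to\Gamma_{K_i}^{\ab}$ et la dualit\'e de Pontryagin. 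Avec ces deux compl\'ements, votre argument est valide.
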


\begin{proof}
Ceci d\'ecoule de \cite[(5.2)]{C5} et du corollaire \ref{corbraueralgebraic} (2).
\end{proof}

\begin{prop}\label{descprop4.1}
Soient $G$, $H$ deux groupes alg\'ebriques connexes et $\psi: H\to G$ un homomorphisme surjectif de noyau central $S$ de type multiplicatif.
Soient $X$ (resp. $Y$) une $G$-vari\'et\'e (resp. $H$-vari\'et\'e) lisse g\'eom\'etriquement int\`egre et $f: Y\to X$ un $H$-morphisme tels que $Y$ soit un $S$-torseur sur $X$, o\`u l'action de $S$ est induite par l'action de $H$.
Alors, pour tout $\sigma\in H^1(k,S)$, le tordu $Y_{\sigma}$ est une $H$-vari\'et\'e et on a:
$$X(\RA_k)^{\Br'_G(X)}=\cup_{\sigma\in H^1(k,S)}f_{\sigma}(Y_{\sigma}(\RA_k)^{\Br'_H(Y_{\sigma})}).$$
\end{prop}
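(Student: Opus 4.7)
I follow the strategy of \cite[Prop. 3.1]{C5}, using Lemma \ref{descprop4.1lem1} as the base case and a push-out construction to reduce to it. The inclusion ``$\supseteq$'' is routine: for each $\sigma\in H^1(k,S)$, the morphism $f_\sigma$ is $H$-equivariant, so the functoriality of $\Br'$ yields $f_\sigma^*\Br'_G(X)\subset \Br'_H(Y_\sigma)$, and the Brauer-Manin orthogonality is preserved along $f_\sigma$.

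For the non-trivial inclusion ``$\subseteq$'', embed $S$ into a quasi-trivial $k$-torus $T_0$ (possible since $S$ is of multiplicative type), and set $T_1:=T_0/S$, a $k$-torus. The centrality of $S$ in $H$ allows the push-outs $H_0:=(H\times T_0)/S$ and $Y_0:=(Y\times T_0)/S$ (with $S$ acting antidiagonally). These fit into a central extension $1\to T_0\to H_0\to G\to 1$ together with an $H_0$-morphism $f_0:Y_0\to X$ that is a $T_0$-torsor, and also into $1\to H\to H_0\to T_1\to 1$ together with an $H_0$-equivariant morphism $\pi:Y_0\to T_1$; the combined morphism $(\pi,f_0):Y_0\to T_1\times X$ is an $S$-torsor. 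The fiber $\pi^{-1}(t)$ for $t\in T_1(k)$ is canonically identified with the twist $Y_{\delta(t)}$, where $\delta:T_1(k)\twoheadrightarrow H^1(k,S)$ is the connecting homomorphism (surjective because $H^1(k,T_0)=0$).

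Applying Lemma \ref{descprop4.1lem1} to $(H_0,T_0,Y_0,X)$ yields
$$X(\RA_k)^{\Br'_G(X)}=f_0\bigl(Y_0(\RA_k)^{\Br'_{H_0}(Y_0)}\bigr).$$
Given $(x_v)$ on the left, pick a lift $(y_{0,v})\in Y_0(\RA_k)^{\Br'_{H_0}(Y_0)}$ and set $(t_v):=\pi(y_{0,v})\in T_1(\RA_k)$. The $T_0$-action on $Y_0$ preserves the fibers of $f_0$ and intertwines with the $T_0$-action on $T_1$ via $\psi:T_0\to T_1$; translating $(y_{0,v})$ by an element of $T_0(\RA_k)$ thus leaves $(x_v)$ unchanged while modifying $(t_v)$ modulo $\psi(T_0(\RA_k))=\ker(T_1(\RA_k)\to\prod_v H^1(k_v,S))$ (using $H^1(k_v,T_0)=0$).

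The main obstacle is to show that the image class $(\sigma_v)\in\prod_v H^1(k_v,S)$ of $(t_v)$ comes from a global $\sigma\in H^1(k,S)$. This is a Poitou-Tate duality argument: each $\chi\in H^1(k,S^*)$ produces, via the type map for the $S$-torsor $Y\to X$ and the exact sequence (\ref{sec2e1}), a class in $\Br'_G(X)/\Br(k)$ whose Brauer-Manin pairing with $(x_v)$ equals, up to sign, the local Poitou-Tate pairing of $(\sigma_v)$ with $\chi$; the hypothesis on $(x_v)$ forces this to vanish, and Poitou-Tate exactness provides $\sigma$ together with $t\in T_1(k)$ satisfying $\delta(t)=\sigma$. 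Choosing $\tau\in T_0(\RA_k)$ with $\psi(\tau)=t\cdot(t_v)^{-1}$, the translated point $(\tau_v\cdot y_{0,v})$ lies in $\pi^{-1}(t)\cong Y_\sigma$, projects onto $(x_v)$, and inherits the Brauer-Manin orthogonality for $\Br'_H(Y_\sigma)$ because every element of $\Br'_H(Y_\sigma)$ lifts modulo $\Br(k)$ to $\Br'_{H_0}(Y_0)$ by Corollary \ref{propbrauersuj} applied to $1\to H\to H_0\to T_1\to 1$.
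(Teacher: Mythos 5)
Your reduction agrees with the paper's up to the last step: the push-outs $H_0=H\times^S T_0$ and $Y_0=Y\times^S T_0$, the application of Lemma \ref{descprop4.1lem1} to the $T_0$-torseur $Y_0\to X$, and the identification of the fibres of $\pi:Y_0\to T_1$ over rational points with the twists $Y_{\delta(t)}$ are exactly the paper's construction. The gaps are in the final descent, where the paper cites \cite[Thm. 5.1]{CLX} and you substitute a direct Poitou--Tate argument. A first, repairable, omission: you embed $S$ into an \emph{arbitrary} quasi-trivial torus $T_0$, but you then invoke Corollary \ref{propbrauersuj} for $1\to H\to H_0\to T_1\to 1$, whose hypothesis is $H^3(k,T_1^*)=0$. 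For an arbitrary choice one only gets $H^3(k,T_1^*)\cong \Coker(H^2(k,T_0^*)\to H^2(k,S^*))$, which need not vanish; the paper chooses the resolution $0\to S\to T_0\to T\to 0$ via \cite[Prop. 1.3]{CTS1} and \cite[Lem. 5.4]{C1} precisely to guarantee this vanishing, and your argument must do the same.

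The genuine gap is the last sentence. Translating $(y_{0,v})$ by an arbitrary $\tau\in T_0(\RA_k)$ with $\psi(\tau)=t\cdot(t_v)^{-1}$ does \emph{not} preserve orthogonality to $\Br'_{H_0}(Y_0)$: if $\tilde\alpha\in \Br'_{H_0}(Y_0)$ lifts $\alpha\in\Br'_H(Y_\sigma)$, then by (\ref{invequa2.3e1}) one has $\sum_v \inv_v\,\tilde\alpha(\tau_v\cdot y_{0,v})=\sum_v \inv_v\,\bigl(\lambda_{\Br}(\tilde\alpha)|_{T_0}\bigr)(\tau_v)$, and $\Br_{2/3,e}(T_0)=\Br_{1,e}(T_0)$ is nonzero for a general quasi-trivial torus, so this sum has no reason to vanish. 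Modifying the lift $\tilde\alpha$ by a class pulled back from $\Br_1(T_1)$ along $\pi$ does not help, since such a class evaluates to the constant $\beta(t)$ on the fibre over the rational point $t$. What is actually needed is a decomposition $(t_v)=\varphi(\tau)\cdot t$ with $\tau\in T_0(\RA_k)^{\Br_1(T_0)}$ and $t\in T_1(k)$ --- i.e. the statement $T_1(\RA_k)^{\Br_1(T_1)}=\varphi(T_0(\RA_k)^{\Br_1(T_0)})\cdot T_1(k)$ of \cite[Thm. 5.1]{CLX}, applied after observing that $(t_v)=\pi(y_{0,v})$ lies in $T_1(\RA_k)^{\Br_1(T_1)}$ because $\pi^*\Br'_{T_1}(T_1)\sbt\Br'_{H_0}(Y_0)$ and $\Br'_{T_1}(T_1)=\Br_1(T_1)$ (Corollaire \ref{corlinBr1et2/3}). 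Your Poitou--Tate step only produces the global class $\sigma$ and some $t$ with $\delta(t)=\sigma$; it controls the obstruction to globalizing $(\sigma_v)$ in $H^1(k,S)$, not the finer obstruction living in $\Br_{1,e}(T_0)^D$, and varying $t$ within $\delta^{-1}(\sigma)$ or $\tau$ within its $S(\RA_k)$-coset does not obviously kill the latter. So as written the proof does not close; it closes if you replace this step by the citation of \cite[Thm. 5.1]{CLX} together with the $T_0(\RA_k)^{\Br_1(T_0)}$-invariance of $Y_0(\RA_k)^{\Br'_{H_0}(Y_0)}$ given by (\ref{invequa2.3}), which is what the paper does.
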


\begin{proof}
On suit la d\'emonstration de \cite[Prop. 5.1]{C5}.

D'apr\`es \cite[Prop. 1.3]{CTS1} et \cite[Lem. 5.4]{C1}, il existe une suite exacte
\begin{equation}\label{BiDprop4.1e1}
0\to S\to T_0\xrightarrow{\varphi} T\to 0
\end{equation}
o\`u  $T_0$  est un tore quasi-trivial et   le groupe des caract\`eres $T^*$ du tore $T$ v\'erifie $H^3(k,T^*)=0$.

Soit $H_0:=H\times^ST_0$ le produit contract\'e (cf. \cite[Lem. 2.2.3]{sko}). 
Alors $H_0$ est un groupe alg\'ebrique connexe et $H \xrightarrow{\psi} G$ induit une suite exacte 
$$1\to T_0\to H_0\xrightarrow{\psi_0} G\to 1.$$
Soit $Y_0:=Y\times^ST_0$. Notons $i: Y\to Y_0$ l'immersion ferm\'ee canonique. 
Alors $ Y_0$ est une $H_0$-vari\'et\'e et $f$ induit un $H_0$-morphisme $Y_0\xrightarrow{f_0} X$ 
tels que $f_0$ est un $T_0$-torseur.
D'apr\`es le lemme \ref{descprop4.1lem1}, on a 
$$X(\RA_k)^{\Br'_G(X)}=f_0(Y_0(\RA_k)^{\Br'_{H_0}(Y_0)}).$$

L'isomorphisme $Y_0\times^{T_0}T\cong Y\times^ST_0\times^{T_0}T\cong X\times T$ 
induit un $T_0$-morphisme $\phi: Y_0\to T$ tel que $\phi^{-1}(e_T)=i(Y)$.
D'apr\`es des arguments classiques (voir la d\'emonstration de \cite[Thm. 5.9]{C1}), 
pour tout $t\in T(k)$, on a $\phi^{-1}(t)\cong Y_{\partial(t)}$ 
et le morphisme $\phi^{-1}(t)\hookrightarrow Y_0\xrightarrow{f_0}X $ est exactement $f_{\partial(t)}$, 
o\`u $\partial: T(k)\twoheadrightarrow H^1(k,S)$ est l'homomorphisme induit par (\ref{BiDprop4.1e1}).
D'apr\`es le th\'eor\`eme \ref{proppropbrauersuj} et le corollaire \ref{propbrauersuj}, $\phi^{-1}(t)$ est une $H$-vari\'et\'e et l'homomorphisme canonique
$\Br'_{H_0}(Y_0)\to \Br'_H(\phi^{-1}(t))$ est surjectif pour tout $t\in T(k)$.

D'apr\`es \cite[Thm. 5.1]{CLX}, on a 
$$T(\RA_k)^{\Br_1(T)}=\varphi(T_0(\RA_k)^{\Br_1(T_0)})\cdot T(k).$$
D'apr\`es le corollaire \ref{corlinBr1et2/3}, on a 
$$\Br_1(T)\cong \Br'_T(T),\ \ \ \Br_1(T_0)\cong \Br'_{T_0}(T_0)\ \ \ 
\text{et}\ \ \  \phi(Y_0(\RA_k)^{\Br'_{H_0}(Y_0)})\sbt T(\RA_k)^{\Br_1(T)}.$$
D'apr\`es (\ref{invequa2.3}), $Y_0(\RA_k)^{\Br'_{H_0}(Y_0)}$ est $T_0(\RA_k)^{\Br_1(T_0)}$-invariant. 
Ceci implique:
$$Y_0(\RA_k)^{\Br'_{H_0}(Y_0)}=T_0(\RA_k)^{\Br_1(T_0)}\cdot (\sqcup_{t\in T(k)}\phi^{-1}(t)(\RA_k)^{\Br'_H(\phi^{-1}(t))}), $$
et donc $X(\RA_k)^{\Br'_G(X)}=f_0 [\sqcup_{t\in T(k)}\phi^{-1}(t)(\RA_k)^{\Br'_H(\phi^{-1}(t))}]
=\cup_{t\in T(k)}f_{\partial(t)}[Y_{\partial(t)}(\RA_k)^{\Br'_H(Y_{\partial(t)})}] .$
\end{proof}

\bigskip

Pour toute vari\'et\'e lisse $X$, d\'efinissons $X(\RA_k^{nc})$ l'espace des points ad\'eliques de $X$ hors des places complexes,
i.e. on a 
$X(\RA_k)\cong (\prod_{v\ \text{complexe}}X(k_v))\times X(\RA_k^{nc}).$
De plus, on a:
\begin{equation}\label{lem4.22.1-e}
X(\RA_k)^{ob}\cong (\prod_{v\ \text{complexe}}X(k_v))\times X(\RA_k^{nc})^{ob}
\end{equation}
pour l'obstruction $ob=\Br(X) $ ou  $ob=\Br_{2/3}(X)$ ou $ob=\et, \Br$ ou, si $X$ est une $G$-vari\'et\'e pour un groupe alg\'ebrique connexe $G$, pour $ob=\Br'_G(X)$ ou $ob=G-\et,\Br'_G$.

Le lemme suivant g\'en\'eralise \cite[Lem. 5.2]{C5} (voir \cite[Lem. 2.2.8]{Dth} pour une variante).

\begin{lem}\label{lem4.22.1}
Soient $X$ une vari\'et\'e lisse et $\{X_i\}_{i\in I}$ les composantes connexes de $X$ 
telles que $X_i$ soit g\'eom\'etriquement int\`egre pour tout $i\in I$.
Alors on a:
$$X(\RA_k^{nc})^{\Br_{2/3}(X)}=\coprod_{i\in I}  X_i(\RA_k^{nc})^{\Br_{2/3}(X_i)}$$
et, si $X$ est une $G$-vari\'et\'e pour un groupe lin\'eaire connexe $G$, on a:
$$X(\RA_k^{nc})^{\Br'_G(X)}=\coprod_{i\in I}  X_i(\RA_k^{nc})^{\Br'_G(X_i)} \ \ \ \text{et} \ \ \  
X(\RA_k^{nc})^{G-\et,\Br'_G}=\coprod_{i\in I}  X_i(\RA_k^{nc})^{G-\et, \Br'_G} .$$
\end{lem}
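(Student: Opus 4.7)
The inclusion $\supseteq$ is clear in all three cases: by the very definition of $\Br_{2/3}(X)$ and $\Br'_G(X)$ for disconnected $X$, any element restricts to an element of $\Br_{2/3}(X_i)$ (resp. $\Br'_G(X_i)$) on each component, so functoriality gives the inclusion. For the $G$-étale statement, any $G$-compatible finite torsor $Y \to X$ with group $F$ restricts to a $G$-compatible $F$-torsor over $X_i$, and the twist $Y_\sigma|_{X_i}$ coincides with $(Y|_{X_i})_\sigma$; combined with the restriction map $\Br'_G(Y_\sigma)|_{(Y|_{X_i})_\sigma} \subset \Br'_G((Y|_{X_i})_\sigma)$, this yields $\supseteq$.

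For the hard direction in the $\Br_{2/3}$ case, the plan is to construct for each index $i \in I$ and each $\beta \in \Br(k)$ an element $\alpha_{i,\beta} \in \Br_{2/3}(X)$ defined by $\alpha_{i,\beta}|_{X_i} = \pi_{X_i}^*\beta$ and $\alpha_{i,\beta}|_{X_j} = 0$ for $j \neq i$. Such a class lies in $\Br_{2/3}(X)$ since its restrictions do: $0$ trivially, and the constant $\pi_{X_i}^*\beta$ lifts from $H^2(k,\mu_\infty) \cong \Br(k)$ through $\pi_{X_i}^*$, hence is in $\Im(H^2_1(X_i,\mu_\infty) \to \Br(X_i)) \subset \Br_{2/3}(X_i)$ by Proposition~\ref{lem21.2}. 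Given $(x_v) \in X(\RA_k^{nc})^{\Br_{2/3}(X)}$, writing $i(v) \in I$ for the index of the component containing $x_v$, the orthogonality against all $\alpha_{i,\beta}$ yields
$$\sum_{v \in \Omega_k^{nc},\, i(v) = i} \inv_v(\beta) = 0 \quad \text{for all } i \in I \text{ and all } \beta \in \Br(k).$$
Applying this to classes $\beta$ with invariants $(1/n, -1/n)$ at any chosen pair of non-complex places $v_1, v_2$ (such classes exist by Hasse--Brauer--Noether, taking $n=2$ if the places are real), one deduces $i(v_1) = i(v_2)$. Hence $v \mapsto i(v)$ is constant on $\Omega_k^{nc}$ and $(x_v)$ lies in a single $X_{i_0}(\RA_k^{nc})$; orthogonality to $\Br_{2/3}(X_{i_0})$ then follows by restriction.

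The $\Br'_G$ case proceeds identically, once one checks $\alpha_{i,\beta} \in \Br'_G(X)$. Since $G$ is connected and $I$ is finite (as $X$ is of finite type), $G$ preserves each $X_i$ set-theoretically. For the constant class $\pi_{X_i}^*\beta$ on $X_i$, we have $\rho^*(\pi_{X_i}^*\beta) = p_2^*(\pi_{X_i}^*\beta) = \pi_{G \times X_i}^*\beta$, so the Sansuc difference vanishes and the class is in $H^2_G(X_i,\mu_\infty)$, whence in $\Br'_G(X_i)$. For the $G$-étale Brauer--Manin statement, the previously established $\Br'_G$ decomposition forces $(x_v) \in X_{i_0}(\RA_k^{nc})$ for a single $i_0$; given a $G$-compatible $F$-torsor $Y_0 \to X_{i_0}$, extend it to a $G$-compatible $F$-torsor $Y := Y_0 \sqcup \bigsqcup_{j \neq i_0}(X_j \times F) \to X$, where $G$ acts trivially on the factor $F$. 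The hypothesis furnishes a twist $Y_\sigma$ with a lift of $(x_v)$ orthogonal to $\Br'_G(Y_\sigma)$; this lift lies in $(Y_0)_\sigma$, and any $\alpha \in \Br'_G((Y_0)_\sigma)$ extends by zero to an element of $\Br'_G(Y_\sigma)$ (the definition for disconnected varieties being component-wise), so orthogonality descends.

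The main obstacle is conceptual rather than computational: it lies in the construction of the distinguishing Brauer classes $\alpha_{i,\beta}$ and the verification that they lie in $\Br_{2/3}(X)$, respectively $\Br'_G(X)$. The rest of the argument is a straightforward application of global reciprocity together with the extension-by-zero trick for torsors and Brauer classes on disconnected varieties.
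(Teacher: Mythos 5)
Your proof is correct and follows essentially the same route as the paper: your classes $\alpha_{i,\beta}$ are exactly the pullbacks under $\phi\colon X \to \pi_0(X) = \coprod_{i\in I}\Spec k$ of the Brauer classes of the component scheme, and your reciprocity argument with invariants $(1/n,-1/n)$ at two non-complex places reproves, in this split case, the equality $\pi_0(X)(\RA_k^{nc})^{\Br(\pi_0(X))} = \pi_0(X)(k)$ that the paper simply cites from Liu--Xu. Your verification that these classes lie in $\Br_{2/3}(X)$ and $\Br'_G(X)$, and the extension-by-zero treatment of the $G$-\'etale case, correspond to what the paper asserts in one line.
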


\begin{proof}
Puisque $\Br_{2/3}(-)$ (resp. $\Br_G'(-)$, resp. l'ensemble des $F$-torseurs, resp. l'ensemble des $F$-torseurs $G$-compatibles pour un $k$-groupe fini $F$) de $X$
est la somme directe de celui des composantes connexes de $X$, on obtient l'inclusion $\supset $ dans les trois cas ci-dessus.

Par ailleurs, soit $\pi_0(X)$ le sch\'ema des composantes connexes g\'eom\'etriques de $X$, 
i.e. $\pi_0(X)$ est un $k$-sch\'ema fini \'etale
et il existe un $k$-morphisme surjectif $\phi: X\to \pi_0(X)$ de fibres g\'eom\'e\-triquement int\`egres.
% Pour tout $k$-sch\'ema $V$ fini \'etale connexe, $V(\RA_k^{nc})\neq\emptyset$ implique $V\cong Spec\ k$.
D'apr\`es \cite[Prop. 3.3]{LX}, on a $\pi_0(X)(\RA_k^{nc})^{\Br(\pi_0(X))}=\pi_0(X)(k)$.
Par d\'efinition, $\phi^*(\Br(\pi_0(X)))\sbt \Br_{2/3}(X)$ et $\phi^*(\Br(\pi_0(X)))\sbt \Br'_G(X)$, d'o\`u l'on obtient l'inclusion ~$\sbt$.
\end{proof}

\begin{prop}\label{prop21.3}
Soit $X$ une $k$-vari\'et\'e lisse g\'eom\'e\-triquement int\`egre.  Soit 
$$1\to S \to L \xrightarrow{\psi} F\to 1$$
une suite exacte de $k$-groupes finis.
Soient $ V\to X$ un $L$-torseur et $Y:=V/S \to X$ le $F$-torseur induit par $\psi$.
Supposons que $S$ est contenu dans le centre de $L$.
Alors, pour tout $\sigma\in H^1(k,F)$ avec $Y_{\sigma}(\RA_k)^{\Br_{2/3}(Y_{\sigma})}\neq\emptyset$,
il existe un $\alpha\in H^1(k,L)$ tel que $\psi_*(\alpha)=\sigma$.
\end{prop}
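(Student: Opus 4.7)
The plan is to show $\partial(\sigma)=0$ in $H^2(k,S)$, where $\partial\colon H^1(k,F)\to H^2(k,S)$ is the connecting map for the central extension $1\to S\to L\to F\to 1$ (well-defined because $S$ is central in $L$); this vanishing is exactly the obstruction to lifting $\sigma$ to some $\alpha\in H^1(k,L)$. I will follow the strategy of \cite[Prop. 5]{D09} adapted to our finite-group setting: first show $\partial(\sigma)\in\Sha^2(k,S)$ by a local computation at each place, then show the Poitou--Tate pairing of $\partial(\sigma)$ against every $a\in\Sha^1(k,S^*)$ vanishes by constructing from $a$ a suitable element of $\Br_{2/3}(Y_\sigma)$, and conclude by non-degeneracy of Poitou--Tate exactly as in the proof of Proposition \ref{sec2prop1}.

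For the local step, fix an adelic point $(y_v)_v\in Y_\sigma(\RA_k)^{\Br_{2/3}(Y_\sigma)}$ and set $x_v:=f_\sigma(y_v)\in X(k_v)$. The fiber $Y_{\sigma,x_v}$ is an $F_\sigma$-torseur over $k_v$ admitting the section $y_v$, hence trivial. Under the standard twisting bijection $H^1(k_v,F_\sigma)\cong H^1(k_v,F)$ the trivial class corresponds to $\sigma_v$, so the fiber $Y_{x_v}$ of the original $F$-torseur $Y\to X$ has class $\sigma_v$. But $[Y_{x_v}]$ is the image under $\psi_*$ of the $L$-torseur class $[V_{x_v}]\in H^1(k_v,L)$, so $\sigma_v$ lifts locally and $\partial(\sigma)_v=0$ for every $v$. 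Hence $\partial(\sigma)\in\Sha^2(k,S)$.

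For the global step, observe first that since $S$ is central in $L$, the $S$-torseur $V\to Y$ descends after twisting to an $S$-torseur $V_\sigma\to Y_\sigma$: the conjugation action of the twisting $F$-cocycle on $S$ is trivial, which is precisely what allows this descent. For $a\in \Sha^1(k,S^*)$, cup product of the pullback of $a$ with $[V_\sigma]\in H^1(Y_\sigma,S)$, via the canonical pairing $S\otimes S^*\to \mu_\infty$, produces a class $\alpha_a\in H^2(Y_\sigma,\mu_\infty)$ that is killed by restriction to $\bar k$; so $\alpha_a\in H^2_1(Y_\sigma,\mu_\infty)$, and its image in $\Br(Y_\sigma)$ lies in $\Br_{2/3}(Y_\sigma)$ by Proposition \ref{lem21.2}. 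The heart of the proof, and the step I expect to be the main obstacle, is a cocycle computation showing that at each place the local invariant $\inv_v(\alpha_a(y_v))$ equals the local Tate pairing $\langle\partial(\sigma)_v,a_v\rangle_v$: this uses the central position of $S$ in an essential way, since $\partial(\sigma)$ is represented by the coboundary of any set-theoretic lift of the cocycle $\sigma$ to $L$, and that coboundary is exactly what the class $y_v^*[V_\sigma]\in H^1(k_v,S)$ records through the twist. Summing invariants over $v$ and using that $(y_v)_v$ is orthogonal to $\Br_{2/3}(Y_\sigma)$ gives $\langle\partial(\sigma),a\rangle_{PT}=0$ for every $a\in\Sha^1(k,S^*)$, and non-degeneracy of the Poitou--Tate pairing forces $\partial(\sigma)=0$, as desired.
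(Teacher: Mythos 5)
Your local step is fine, and the overall strategy (reduce to $\partial(\sigma)=0$, show $\partial(\sigma)\in\Sha^2(k,S)$, kill it by Poitou--Tate against $\Sha^1(k,S^*)$) is indeed the one that ultimately underlies the paper's proof. But the global step, which you yourself flag as the main obstacle, contains a circularity rather than just a computation to be filled in. You assert that ``since $S$ is central in $L$, the $S$-torseur $V\to Y$ descends after twisting to an $S$-torseur $V_\sigma\to Y_\sigma$.'' Centrality only gives $S_\sigma=S$ as a $k$-group; it does not produce the twisted torseur. To descend $V_{\bar k}\to Y_{\bar k}$ along $Y_{\sigma,\bar k}\to Y_\sigma$ you must lift the twisted Galois action $\gamma\mapsto\sigma_\gamma\circ\gamma$ from $Y_{\bar k}$ to $V_{\bar k}$; the choices of lift form a torsor under $H^0(Y_{\bar k},S)$, and (for $Y_{\bar k}$ connected) the obstruction to a coherent choice is exactly the class $\partial(\sigma)\in H^2(k,S)$ you are trying to prove is zero. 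So the existence of $V_\sigma\to Y_\sigma$ is essentially equivalent to the conclusion and cannot be taken as the starting point of the argument. Relatedly, your key identity $\inv_v(\alpha_a(y_v))=\langle\partial(\sigma)_v,a_v\rangle_v$ does not typecheck: there is no local pairing $H^2(k_v,S)\times H^1(k_v,S^*)\to\BQ/\BZ$ (the degrees sum to $3$), and in any case $\partial(\sigma)_v=0$ for all $v$ by your own local step, so the right-hand side would vanish identically, independently of the Brauer--Manin hypothesis. The Poitou--Tate pairing $\Sha^2\times\Sha^1\to\BQ/\BZ$ is computed from a coherent family of \emph{local trivialisations} of $\partial(\sigma)$, not from the (zero) local classes, and producing that family is precisely the difficulty.

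The paper avoids this by never twisting the torseur $V\to Y$ itself: it works at the level of \emph{types}, i.e.\ elements of $\Hom_{D^+(k)}(S_\sigma^*,KD'(-))$, which can always be transported under twisting (\cite[Lem.~7.2]{CDX}), and it transfers the lifting problem to the exact sequence (\ref{sec2e1}) via the flasque resolution $0\to S\to T\to T_0\to 0$ and the pushout $L'=L\times^S T$, invoking \cite[Lem.~7.3]{CDX}. The hypothesis $Y_\sigma(\RA_k)^{\Br_{2/3}(Y_\sigma)}\neq\emptyset$ then enters only through the proposition \ref{sec2prop1}: it forces the type map $\chi_S$ to be surjective on a geometrically integral component $X'$ of $Y_\sigma$ (extracted via the lemme \ref{lem4.22.1} --- a reduction you also skip, and which is needed before applying the proposition \ref{lem21.2} or \ref{sec2prop1}, since $Y_\sigma$ need not be geometrically integre), whence $\partial_S$ vanishes on the relevant class by exactness. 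If you want to keep your more direct route, you would need to replace your descended torseur by the transported type and prove the compatibility of $\partial_S$ with $\partial(\sigma)$ --- which is exactly the content of \cite[Lem.~7.3]{CDX}.
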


\begin{proof}
D'apr\`es le lemme \ref{lem4.22.1}, il existe une composante connexe $X'\sbt Y_{\sigma}$ telle que $X'(\RA_k)^{\Br_{2/3}(X')}\neq\emptyset$.
Ainsi, $X'$ est g\'eom\'e\-triquement int\`egre sur $k$.

D'apr\`es \cite[Prop. 1.3]{CTS1}, il existe une suite exacte 
$0\to S\to T\to T_0\to 0$ avec $T$ un tore flasque et $T_0$ un tore quasi-trivial.
Soit $L':=L\times^ST $ le produit contract\'e (cf. \cite[Lem. 2.2.3]{sko}). 
Alors $L'$ est un groupe lin\'eaire, car  $S$ est contenu dans le centre de $L$.
Ceci induit un diagramme commutatif de suites exactes et de colonnes exactes:
$$\xymatrix{&1\ar[d]&1\ar[d]&&\\
1\ar[r]&S\ar[r]\ar[d]^{\phi}&L\ar[r]^{\psi}\ar[d]^{\psi_2}&F\ar[d]^=\ar[r]&1\\
1\ar[r]&T\ar[r]\ar[d]&L'\ar[r]^{\psi_1}\ar[d]&F\ar[r]&1\\
&T_0\ar[r]^=\ar[d]&T_0\ar[d]&&\\
&0&0&&.
}$$
Puisque $H^1(k,T_0)=0$, ceci induit un diagramme commutatif de suites exactes d'ensembles point\'es:
$$ \xymatrix{H^1(k,L)\ar[r]^{\psi_*}\ar@{->>}[d]&H^1(k,F)\ar[d]^=\ar[r]&H^2(k,S)\ar[d]\\
H^1(k,L')\ar[r]^{\psi_{1,*}}&H^1(k,F)\ar[r]&H^2(k,T).
} $$
D'apr\`es \cite[(2)]{HS13}, on a un diagramme commutatif de suites exactes: 
$$\xymatrix{H^1(Y,S)\ar[r]^-{\chi}&\Hom_{D^+(k)}(S^*,KD'(Y))\ar[d]^{\psi_3}\ar[r]&H^2(k,S)\\
H^1(Y_{\sigma},S_{\sigma})\ar[r]\ar[d]&\Hom_{D^+(k)}(S_{\sigma}^*,KD'(Y_{\sigma}))\ar[r]\ar[d]^{\psi_4}&H^2(k,S_{\sigma})\ar[d]\\
H^1(X',S_{\sigma})\ar[r]^-{\chi_S}\ar[d]&\Hom_{D^+(k)}(S_{\sigma}^*,KD'(X'))\ar[r]^-{\partial_S}\ar[d]^{-\circ \phi_{\sigma}^*}&H^2(k,S_{\sigma})\ar[d]\\
H^1(X',T_{\sigma})\ar[r]^-{\chi_T}&\Hom_{D^+(k)}(T_{\sigma}^*,KD'(X'))\ar[r]^-{\partial_T}&H^2(k,T),
}$$
o\`u $\psi_3$ est induit par le tordu de $\sigma$ (voir \cite[Lem. 7.2]{CDX}) et $\psi_4$ est induit par $X'\sbt Y_{\sigma}$ (et donc $KD'(Y_{\sigma})\to KD'(X')$).
Appliquons \cite[Lem. 7.3]{CDX} au $L'$-torseur $\psi_{2,*}([V])$ sur $X$. 
On obtient que $\sigma\in \Im (\psi_{1,*})=\Im(\psi_*)$ si et seulement si 
$$\partial_S((\psi_4\circ \psi_3\circ \chi)([V]_Y))= \partial_T((\psi_4\circ \psi_3\circ \chi)([V]_Y)\circ \phi_{\sigma}^*)=0,$$ o\`u $[V]_Y$ est le $S$-torseur $V\to Y$.
D'apr\`es la proposition \ref{sec2prop1}, le type $\chi_S$ est surjectif, d'o\`u le r\'esultat.
\end{proof}

\section{D\'emonstration}

Dans toute cette section,  $k$ est un corps de nombres. 
Sauf  mention explicite du contraire,  une vari\'et\'e est une $k$-vari\'et\'e.

Les d\'efinitions $X(\RA_k)^{\et,\Br}, X(\RA_k)^{G-\et, \Br_G},  X(\RA_k)^{G-\et, \Br'_G}$ peuvent \^etre g\'en\'eralis\'es \`a la fa\c{c}on ci-dessous (\ref{def1e2}).

Soit $\mathbf{AB}$ la cat\'egorie des groupes ab\'eliens. 
Soit $\mathbf{GX}$ la cat\'egorie  des couples $(G,X)$ avec $G$ un groupe alg\'ebrique connexe et $X$ une $G$-vari\'et\'e lisse,
 et un morphisme $(H,Y)\to (G,X)$ dans $\mathbf{GX}$ est un couple $(\psi,f)$ avec $\psi: H\to G$ un homomorphisme et $f: Y\to X$ un $H$-morphisme,
  o\`u l'action de $H$ sur $X$ est induite par $\psi$.
  
  En fait, pour tout torseur $f: Y\to X$ sous un $k$-groupe fini, il existe un groupe alg\'ebrique connexe $H$ et un homomorphisme fini surjectif $\psi: H\to G$
   tels que $(\psi,f)\in \mathbf{GX}$ et $\psi$ soit minimal pour cette propri\'et\'e (cf. \cite[Prop. 3.8]{C5}).
  Ce groupe $H$ s'appelle \emph{le groupe minimal compatible avec le torseur $f$} (\cite[D\'ef. 3.9]{C5}).
  
  Soit  $B(-,-): \mathbf{GX}\to \mathbf{AB}$ un foncteur contravariant qui associe au couple $(G,X)$ un sous-groupe $B(G,X)\sbt \Br(X)$.
  On d\'efinit
  \begin{equation}\label{def1e2}
  X(\RA_k)^{G-\et,B(G,-)}:= \bigcap_{\stackrel{f: Y\xrightarrow{F}X\ G-\text{compatible} ,}{ F\ \text{fini}}} \bigcup_{\sigma\in H^1(k,F)}f_{\sigma}(Y_{\sigma}(\RA_k)^{B(G,Y_{\sigma})}) 
  \end{equation}
  
  On fixe un objet $(G,X)\in \mathbf{GX}$. 
Soit $\mathbf{GX}_X$ l'ensemble des objets $(H,Y)\in \mathbf{GX}$ tels qu'il existe un morphisme $(\psi,f): (H,Y)\to (G,X)$ dans $\mathbf{GX}$ avec $\psi, f$ finis.
En fait, la d\'emonstration de \cite[Thm. 1.4]{C5} montre le th\'eor\`eme ci-dessous.

\begin{thm}(\cite[Rem. 6.7]{C5})\label{thmmainpfrem}
Soit $B(-,-): \mathbf{GX}\to \mathbf{AB}$ un foncteur contravariant comme ci-dessus.
Supposons que, pour tout entier $n \geq 2$ et tout objet $(H,Y)$ dans $\mathbf{GX}_X$ avec $Y$ g\'eom\'e\-triquement int\`egre,
 on a:

(i) si $Y(\RA_k)^{B(H,Y)}\neq\emptyset$, alors il existe un torseur universel de $n$-torsion pour $Y$;

 (ii)  pour tout  \'el\'ement de $n$-torsion $\alpha\in \Br(Y)$ et tout torseur universel de $n$-torsion $f: \CT_Y\to Y$ (s'il existe) sous le groupe $S_Y$, 
 on a $f^*(\alpha)\in B(H',\CT_Y)$, o\`u $H'$ est le groupe minimal compatible avec le torseur $f$;
 
 (iii) pour tout $(H',Y')\in \mathbf{GX}_X$, si  $Y'=\sqcup_{i\in I} Y_i$ avec $Y_i$ g\'eom\'etriquement int\`egre pour tout $i\in I$, on a 
 $$Y'(\RA_k^{nc})^{B(H',Y')}=\coprod_{i\in I}  Y_i(\RA_k^{nc})^{B(H',Y_i)} \ \ \ \text{et} \ \ \  
Y'(\RA_k^{nc})^{H'-\et,B(H',-)}=\coprod_{i\in I}  Y_i(\RA_k^{nc})^{H'-\et, B(H',-)} ;$$
 
 (iv) pour une extension centrale de $k$-groupes finis $1\to S \to L \xrightarrow{\psi} F\to 1$,
un $\sigma\in H^1(k,F)$, un $L$-torseur $ V\to Y$ avec $Z:=V/S \to Y$ le $F$-torseur induit par $\psi$,
alors $Z_{\sigma}(\RA_k)^{B(H',Z_{\sigma})}\neq\emptyset$ implique $\sigma\in \Im(H^1(k,L)\to H^1(k,F))$, where $H'$ est le groupe minimal compatible avec le torseur $Z_{\sigma}\to Y$;
 
 (v) pour un morphisme $(\psi, f): (H',Y')\to (H,Y)$ dans $ \mathbf{GX}_X$ tel que $\psi$ soit fini surjectif de noyau $S$ central, $Y'$ soit g\'eom\'e\-triquement int\`egre et $f$ soit un $S$-torseur, o\`u l'action de $S$ est induite par l'action de $H'$,
on a, pour tout $\sigma\in H^1(k,S)$, le tordu $Y'_{\sigma}$ est une $H'$-vari\'et\'e et:
$$Y(\RA_k)^{B(H,Y)}=\cup_{\sigma\in H^1(k,S)}f_{\sigma}(Y'_{\sigma}(\RA_k)^{B(H',Y'_{\sigma})}).$$
 
Alors on a $X(\RA_k)^{\et,\Br}=X(\RA_k)^{G-\et,B(G,-)}$.
\end{thm}

Rappelons que, soit $S_X$ un $k$-groupe fini commutatif avec $S_X^*\cong H^1(X_{\bk},\mu_n)$,
  un \emph{torseur universel de $n$-torsion} pour $X$ est un $S_X$-torseur $\CT_X$ sur $X$
 tel que $\chi([\CT_X])$ dans (\ref{sec2e1}) soit l'homomorphisme canonique $H^1(X_{\bk},\mu_n)\to KD'(X)$ (cf. \cite[D\'ef. 2.1]{C5}).

\begin{lem}\label{thmmainpflem}
Soit $B(-,-): \mathbf{GX}\to \mathbf{AB}$ un foncteur contravariant comme ci-dessus.
Si $\Br_{2/3}(Y)\sbt B(H,Y)$ pour tout $(H,Y)\in \mathbf{GX}_X$, alors les hypoth\`eses (i) et (iv) du th\'eor\`eme \ref{thmmainpfrem} valent.
\end{lem}

\begin{proof}
La proposition \ref{prop21.3} implique (iv). 

La proposition \ref{sec2prop1} implique, si $Y(\RA_k)^{\Br_{2/3}(Y)}\supset Y(\RA_k)^{B(H,Y)}\neq\emptyset$, alors $\chi$ dans (\ref{sec2e1}) est surjectif, d'o\`u on a (i). 
\end{proof}

\begin{proof}[D\'emonstration du th\'eor\`eme \ref{Thm1}]
Par d\'efinition, il suffit de montrer que $$X(\RA_k)^{G-\et, \Br'_G}=X(\RA_k)^{\et, \Br} .$$
D'apr\`es le th\'eor\`eme \ref{thmmainpfrem}, il suffit de v\'erifier les hypoth\`eses (i)-(v) du th\'eor\`eme \ref{thmmainpfrem}
 pour $B(-,-):=\Br'_{(-)}(-)$.
 
Le lemme \ref{thmmainpflem} et la proposition \ref{prop21.2} (1) impliquent (i) et (iv).
Le lemme \ref{lem4.22.1} implique (iii).
La proposition \ref{descprop4.1} implique (v).
\`A la fin, l'hypoth\`ese (ii) d\'ecoule du m\^eme argument que \cite[Prop. 3.12]{C5}.
On le rappelle ci-dessous.

Notons $\psi: H'\to H$ l'homomorphisme, $\rho: H\times Y\to Y$, $\rho_t: H'\times \CT_Y\to \CT_Y$ les actions 
et $p_1:H\times Y\to H$, $p_2:H\times Y\to Y$, $p_{1,t}: H'\times \CT_Y\to H'$, $p_{2,t}: H'\times \CT_Y\to \CT_Y$ les projections.
Soit $\CT_H$ un torseur universel de $n$-torsion pour $H$ sous le groupe $S_H$.

La formule de K\"unneth de degr\'e 2 (\cite{SZ}, cf. \cite[Cor. 2.7]{C5}) donne un homomorphisme surjectif
$$H^2(H,\mu_n)\oplus H^2(Y,\mu_n)\oplus \Hom_k(S_H,S_Y^*)\xrightarrow{(p_1^*,p_2^*,\varepsilon) }H^2(H\times Y,\mu_n),$$
o\`u $\varepsilon (\phi)=\phi_*([\CT_H])\cup [\CT_Y] $.
On obtient :
pour tout $\alpha_1\in H^2(Y, \mu_n)$, il existe un $\phi \in \Hom(S_H,S_Y^*)$ et un $\beta\in H^2(H,\mu_n)$ 
tels que $(\rho^*-p_2^*)(\alpha_1)=\varepsilon (\phi)+p_1^*(\beta)$.

Puisque $f^*([\CT_Y])=0\in H^1(\CT_Y,S_Y)$,  on a
$$(\psi\times f)^*(\varepsilon (\phi))=(\psi\times f)^* (\phi_*([\CT_H])\cup [\CT_Y] )=\phi_*(\psi^*([\CT_H]))\cup f^*([\CT_Y])=0.$$
Alors $(\rho_t^*-p_{2,t}^*)(f^*(\alpha_1) )= (\psi\times f)^*((\rho^*-p_2^*)(\alpha_1))=(\psi\times f)^*(p_1^*(\beta))=p_{1,t}^*(\psi^*(\beta))$.
D'apr\`es la suite exacte de Kummer, $(\rho_t^*-p_{2,t}^*)(f^*(\alpha))\sbt p_{1,t}^*\Im(H^2(H',\mu_n)\to \Br(H')) $, d'o\`u le r\'esultat.
\end{proof}

\begin{cor}\label{cormainthm}
Soit $X$ un $G$-espace homog\`ene \`a stabilisateur g\'eom\'etrique connexe.
Alors $X(\RA_k)^{\Br'_G(X)}=X(\RA_k)^{\et, \Br} $.
\end{cor}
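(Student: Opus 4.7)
Mon plan consiste à décomposer la preuve en deux inclusions, l'une étant triviale et l'autre se réduisant au théorème \ref{Thm1}. L'inclusion $X(\RA_k)^{\et,\Br}\sbt X(\RA_k)^{\Br'_G(X)}$ résulte de $\Br'_G(X)\sbt \Br(X)$ combiné avec l'inclusion triviale $X(\RA_k)^{\et,\Br}\sbt X(\RA_k)^{\Br(X)}$ (obtenue en prenant le torseur trivial dans la définition (\ref{Bthm2e})). Pour la réciproque, j'invoquerais le théorème \ref{Thm1} afin de me ramener à vérifier $X(\RA_k)^{\Br'_G(X)}\sbt X(\RA_k)^{G-\et,\Br'_G}$, ce qui, par (\ref{def1e1}), équivaut à exhiber, pour tout $F$-torseur $G$-compatible $f:Y\to X$ avec $F$ un $k$-groupe fini, un $\sigma\in H^1(k,F)$ tel que
$$X(\RA_k)^{\Br'_G(X)}\sbt f_\sigma\left(Y_\sigma(\RA_k)^{\Br'_G(Y_\sigma)}\right).$$

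L'étape-clé consiste à exploiter l'hypothèse de stabilisateur géométrique connexe pour établir que tout tel torseur est ``géométriquement constant'' au-dessus de $X$. Soit $H\sbt G_{\bk}$ le stabilisateur géométrique (connexe par hypothèse) d'un point $\bar x\in X(\bk)$. Pour un relèvement $\bar y\in Y(\bk)$ de $\bar x$, le stabilisateur $\Stab_{G_{\bk}}(\bar y)$ est un sous-groupe fermé de $H$ d'indice fini (majoré par $|F|$, puisque $H$ agit sur le $F_{\bk}$-torseur $f^{-1}(\bar x)$), donc égal à $H$ par connexité de $H$. Il s'ensuit que chaque $G_{\bk}$-orbite dans $Y_{\bk}$ est isomorphe à $X_{\bk}$ via $f$. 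La restriction de $f$ à chaque composante connexe $Y_w$ définit alors un morphisme $G$-équivariant vers $X$ qui est un isomorphisme sur $\bk$ entre espaces homogènes de même stabilisateur; c'est donc déjà un isomorphisme sur $k$. J'obtiens ainsi une décomposition $G$-équivariante $Y\cong X\times_k W$, où $W:=\pi_0(Y)$ est un $k$-schéma étale fini muni d'une structure naturelle de $F$-torseur sur $\Spec(k)$ via l'action résiduelle de $F$ permutant les composantes.

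Je prendrais alors $\sigma:=[W]\in H^1(k,F)$. La classe de $Y$ comme $F$-torseur sur $X$ étant l'image inverse de $[W]$ par $p:X\to \Spec(k)$, la torsion par $\sigma$ rend ce torseur trivial: $Y_\sigma\cong X\times_k F_\sigma$ comme $F_\sigma$-torseur sur $X$. Dès lors, $Y_\sigma$ est une réunion disjointe de copies de $X$ indexée par $F_\sigma(\bk)$, et
$$Y_\sigma(\RA_k)^{\Br'_G(Y_\sigma)}\cong X(\RA_k)^{\Br'_G(X)}\times F_\sigma(\RA_k).$$
Comme $F_\sigma(\RA_k)\neq\emptyset$ (il contient l'identité), l'image par $f_\sigma$ coïncide avec $X(\RA_k)^{\Br'_G(X)}$, ce qui fournit l'inclusion souhaitée et, via le théorème \ref{Thm1}, l'énoncé du corollaire.

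L'obstacle principal attendu est la vérification rigoureuse de la décomposition $Y\cong X\times_k W$ en tant que $G$-variétés sur $k$ (et non seulement géométriquement), ainsi que l'identification précise du tordu $Y_\sigma$ avec le $F_\sigma$-torseur trivial, particulièrement lorsque $F$ n'est pas commutatif. Ceci s'appuie sur la bijection classique entre $H^1(X,F)$ et $H^1(X,F_\sigma)$ induite par la torsion, qui envoie la classe $p^*\sigma$ sur la classe triviale.
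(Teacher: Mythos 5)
Votre d\'emonstration est correcte et suit essentiellement la m\^eme d\'emarche que celle de l'article : le point cl\'e --- tout $F$-torseur $G$-compatible sur un espace homog\`ene \`a stabilisateur g\'eom\'etrique connexe provient de $k$ --- est exactement ce que l'article d\'eduit de la proposition \ref{prop3.1} et de \cite[Cor. 2.7 (4)]{C5}, ce qui donne $X(\RA_k)^{G-\et, \Br'_G}=X(\RA_k)^{\Br'_G(X)}$ puis la conclusion via le th\'eor\`eme \ref{Thm1}. Votre red\'emonstration directe de ce fait (connexit\'e du stabilisateur, donc chaque composante connexe g\'eom\'etrique de $Y$ s'envoie isomorphiquement sur $X_{\bk}$) et l'argument de torsion qui suit sont valables ; notez seulement qu'une composante connexe de $Y$ sur $k$ n'est pas en g\'en\'eral isomorphe \`a $X$, mais la d\'ecomposition $Y\cong X\times_k\pi_0(Y)$ que vous \'enoncez ensuite est bien la formulation correcte.
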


\begin{proof}
D'apr\`es  \cite[Cor. 3.5 (4)]{C5}, tout torseur  $G$-compatible  sous un $k$-groupe fini provient de $k$.
Donc on a $X(\RA_k)^{G-\et, \Br'_G}=X(\RA_k)^{\Br'_G(X)}$.
Le r\'esultat d\'ecoule du th\'eor\`eme \ref{Thm1}.
\end{proof}

\bibliographystyle{alpha}

\begin{thebibliography}{Gro}
% \bibitem[Ba]{Bale} F. Balestrieri: \emph{Iterating the algebraic etale-Brauer set}, Journal of Number Theory 182 (2018), 284--295.
% \bibitem[B96]{B96} M. Borovoi: \emph{The Brauer-Manin obstruction to the Hasse principle for homogeneous spaces with connected or abelian stabilizer}, J. reine angew. Math. 473 (1996), 181--194.
\bibitem[BD]{BD} M. Borovoi et C. Demarche: \emph{Manin obstruction to strong approximation for homogeneous spaces}, Comment. Math. Hev. 88 (2013), 1-54.
\bibitem[BrD]{BrD} T. Br\"ocker et T. t. Dieck: \emph{Representations of Compact Lie Groups},  Graduate Texts in Mathematics 98. Springer-Verlag, New York, 1985.
% \bibitem[BLR]{BLR} S. Bosch, W. L\"utkebohmert et M. Raynaud: \emph{N\'eron models}, Ergebnisse der Mathematik und ihrer Grenzgebiete (3), 21. Springer-Verlag, Berlin, 1990.
\bibitem[C18]{C1} Y. Cao: \emph{Approximation forte pour les vari\'et\'es avec une action d'un groupe lin\'eaire}, Compositio math. 154 (2018), 773--819. 
\bibitem[C20]{C5} Y. Cao: \emph{Sous-groupe de Brauer invariant et obstruction de descente it\'er\'ee}, Algebra \& Number Theory 14 (2020), no. 8, 2151--2183.
\bibitem[CDX]{CDX} Y. Cao, C. Demarche, F. Xu: \emph{Comparing descent obstruction and Brauer-Manin obstruction for open varieties}, Trans. Amer. Math. Soc. 371 (2019), no. 12, 8625--8650.
\bibitem[CLX]{CLX} Y. Cao, Y. Liang, F. Xu: \emph{Arithmetic purity of strong approximation}, Journal de Math\'ematiques Pures et Appliqu\'ees 132 (2019) 334--368.
 \bibitem[CX]{CX1} Y. Cao, F. Xu: \emph{Strong Approximation with Brauer-Manin Obstruction for Groupic Varieties}, Proc. London Math. Soc. (3) 117 (2018), 725--750.
% \bibitem[CX2]{CX2}  Y. Cao, F. Xu: \emph{Strong approximation for a family of norm varieties}, arXiv: 180311003.
\bibitem[Cod]{Co} B. Conrad:  \emph{Weil and Grothendieck approaches to adelic points}, Enseign. Math. 58 (2012), 61--97.
\bibitem[CT08]{CT07} J.-L. Colliot-Th\'el\`ene: \emph{R\'esolutions flasques des groupes lin\'eaires connexes}, J. reine angew. Math. 618 (2008), 77-133.
%\bibitem[CTH]{CTH} J.-L. Colliot-Th\'el\`ene et D. Harari: \emph{Approximation forte en famille}, J. reine angew.   Math. 710 (2016), 173-198.
\bibitem[CTSa]{CTS1} J.-L. Colliot-Th\'el\`ene et J.-J. Sansuc:  \emph{Principal homogeneous spaces under flasque tori, applications}, Journal of Algebra 106 (1987) 148-205.
 \bibitem[CTSb]{CTS} J.-L. Colliot-Th\'el\`ene et J.-J. Sansuc:  \emph{La descente sur les vari\'et\'es rationnelles, II}, Duke Math. J. 54 (1987) 375-492.
% \bibitem[CTX09]{CTX09} J-L. Colliot-Th\'el\`ene et F. Xu:  \emph{Brauer-Manin obstruction for integral points of homogeneous spaces and representations by integral quadratic forms}, Compos. Math. 145(2009) 309-363.
%\bibitem[CTX13]{CTX13} J-L. Colliot-Th\'el\`ene, F. Xu:  \emph{Strong approximation for the total space of certain quadric fibrations}, Acta Arithmetica 157 (2013), 169-199.
\bibitem[Cr]{Cr} B. Creutz: \emph{There are no transcendental Brauer-Manin obstructions on abelian varieties}, International Mathematics Research Notices, volume 2020, Issue 9 (2020), 2684--2697.

\bibitem[D09a]{D09} C. Demarche: \emph{Obstruction de descente et obstruction de Brauer-Manin \'etale}, Algebra \& Number Theory 3 (2009) 237--254.
%\bibitem[D11]{D11} C. Demarche: \emph{ Le d\'efaut d'approximation forte dans les groupes lin\'eaires connexes}, Proc.London Math. Soc. 102 (2011), 563-597. 
\bibitem[D09b]{Dth} C. Demarche: \emph{M\'ethodes cohomologiques pour l'\'etude des points rationnels sur les espaces homog\`enes},  Th\`ese de doctorat, Universit\'e Paris-Sud (2009).
\bibitem[Fu]{Fu} L. Fu: \emph{\'Etale cohomology theory},  Nankai Tracts in Mathematics, Vol. 13, World Scientific, 2011.
\bibitem[Ha02]{Ha02} D. Harari: \emph{Groupes alg\'ebriques et points rationnels}, Math. Ann. 322, No 4, 811--826 (2002). 
% \bibitem[HS08] {HS08} D. Harari et T. Szamuely: \emph{Local-global principles for 1-motives}, Duke Math. J.  143, No 3, 531--557 (2008). 
%\bibitem[Ha08]{Ha08} D. Harari: \emph{Le d\'efaut d'approximation forte pour les groupes alg\'ebriques commutatifs}, Algebra \&  Number Theory 2 (5)(2008) 595-611.
\bibitem[HS13]{HS13} D. Harari, A. N. Skorobogatov:  \emph{Descent theory for open varieties}, Torsors, \'etale homotopy and applications to rational points. LMS Lecture Note Series 405, Cambridge University Press (2013), 250--279.
 % \bibitem[HW]{HW} Y. Harpaz et O. Wittenberg: \emph{On the fibration method for zero-cycles and rational points}, Annals of Mathematics 183 (2016), no. 1, 229-295.
\bibitem[Iw]{Iw} Iwasawa: \emph{On some types of topological groups}, Annals of Mathematics (2) 50 (1949), 507--558.
\bibitem[LX]{LX} Q. Liu et F. Xu: \emph{Very strong approximation for certain algebraic varieties}, Math. Ann. 363 (2015) 701--731.
\bibitem[Mi80]{Mi80} J. S. Milne: \emph{\'Etale Cohomology}, Princeton Math. Ser 33, Princeton University Press, Princeton 1980.
% \bibitem[Mi06]{Mi06}  J. S. Milne: \emph{Arithmetic Duality Theorems}, second edition, 
\bibitem[Mil]{Mil} J. Milnor: \emph{Morse theory}, Based on lecture notes by M. Spivak and R. Wells, Annals of Mathematics Studies 51, Princeton University Press, 1963.
%\bibitem[Miy]{Miy} M. Miyanishi: \emph{On the algebraic fundamental group of an algebraic group}, J. Math. Kyoto Univ. 12-2 (1972), 351--367.
\bibitem[OSVZ]{OSVZ} M. Orr,  A. N. Skorobogatov, D. Valloni, Y. G. Zarhin: \emph{Invariant Brauer group of an abelian variety}, aiXiv: 2007.05473.
%\bibitem[PR]{PR} V. P. Platonov et A. S. Rapinchuk: \emph{Algebraic groups and number theory}, Academic Press, 1994.
\bibitem[Po10]{P} B. Poonen: \emph{Insufficiency of the Brauer-Manin obstruction applied to \'etale covers}, Ann. of Math. 171 (2010) 2157--2169.
\bibitem[Po17]{P1} B. Poonen: \emph{Rational points on varieties}, Graduate Studies in Mathematics 186, American Mathematical Society, Providence, RI, 2017. 
\bibitem[S]{S} J.-J. Sansuc: \emph{Groupe de Brauer et arithm\'etique des groupes alg\'ebriques lin\'eaires sur un corps de nombres}, J. reine angew   Math. 327 (1981), 12-80.
%\bibitem[Se65]{Se} J.-P. Serre: \emph{Cohomologie Galoisienne}, Lecture Notes in Mathematics, vol 5, Springer, Berlin, 1965.
\bibitem[SGA1]{SGA1}  A. Grothendieck: \emph{Rev\^etements \'etales et groupe fondamental, (SGA 1) }, Lecture notes in mathematics 224, Berlin; New York, Springer--Verlag 1971.
\bibitem[Sk99]{Sk99} A. N. Skorobogatov: \emph{Beyond the Manin obstruction}, Invent. Math. 135 (1999), no. 2, 399--424.
\bibitem [Sk01]{sko} A. N. Skorobogatov: \emph{ Torsors and Rational Points}, Cambridge Tracts in Mathematics, vol. 144,    Cambridge University Press, 2001.
\bibitem[Sk09]{Sk1} A. N. Skorobogatov: \emph{Descent obstruction is equivalent to \'etale Brauer-Manin obstruction}, Math. Ann. 344 (2009) 501--510.
\bibitem[Sr]{Sr} V. Srinivas:  \emph{Algebraic K-theory}, Progress in Mathematics 90. Birkh\"auser Boston, Inc., 1991.
\bibitem[St]{St} M. Stoll: \emph{Finite descent obstructions and rational points on curves}, Algebra \& Number Theory 1 (2007) 349--391.
\bibitem [SZ]{SZ} A. N. Skorobogatov, Y. G. Zarhin: \emph{The Brauer group and the Brauer-Manin set of products of varieties}, J. Eur. Math. Soc. 16 (2014) 749--769.
% \bibitem[Sz]{Sz} T. Szamuely: \emph{Galois Groups and Fundamental Groups}, Cambridge Studies in Advanced Mathematics 117, Cambridge University Press, Cambridge, 2009.
\bibitem[We]{We} C. Weibel: \emph{An introduction to homological algebra}, Cambridge University Press 1994.
\end{thebibliography}
\end{document}